\documentclass[11pt]{article}
\usepackage[utf8]{inputenc}
\usepackage{amssymb,graphicx,mathtools,amsfonts}
\usepackage{amsmath}
\usepackage{amsthm}
\usepackage{hyperref}
\usepackage[dvipsnames]{xcolor}
\usepackage{dsfont}
\usepackage{comment}

\usepackage{a4wide}
\usepackage{esint}
\catcode`\@=11
\def\theequation{\@arabic{\c@section}.\@arabic{\c@equation}}
\catcode`\@=12
\usepackage{enumitem}
\graphicspath{{images/}}
\newtheorem{theorem}{Theorem}[section] 
\newtheorem{lemma}{Lemma}[section] 
\newtheorem{definition}{Definition}[section] 
\newtheorem{proposition}{Proposition}[section]

\newtheorem*{remark}{Remark}

\allowdisplaybreaks 

\newcommand{\al} {\alpha}

\newcommand{\om} {\Omega}

\newcommand{\la} {\lambda}

\newcommand{\real}{\mathbb{R}}

\newcommand{\rnn}{\mathbb{R}^{N}}

\newcommand{\lv}{\lVert}
\newcommand{\rv}{\rVert}

\newcommand{\grad}{\nabla}

\newcommand{\Gr}{\Delta_{\gamma}}
\newcommand{\gradgr}{\nabla_{\gamma}}
\newcommand{\spacegr}{H^{1}_{0,\gamma}(\Omega)}
\newcommand{\spaceg}{H^{1}_{\gamma}(\Omega)}
\newcommand{\tnorm}[1]{\left|\!\left|\!\left| #1 \right|\!\right|\!\right|}

\title{Existence and multiplicity of solutions for a critical Grushin problem with a singular nonlinearity}

\author{Shammi Malhotra\footnote{Department of Mathematics, Indian Institute of Technology Delhi, Hauz Khas New Delhi 110016,  India, shammi22malhotra@gmail.com}}

\date{}

\begin{document}
\maketitle

\begin{abstract}
\noindent We investigate the existence and multiplicity of positive solutions to the problem
\begin{equation*}
\begin{cases}
\begin{aligned}
    - \Delta_{\gamma} u &= \lambda u^{p} + u^{-\delta} &\quad \text{in } \Omega,\\
    u &= 0 &\quad \text{on } \partial \Omega,
\end{aligned}
\end{cases}
\end{equation*}
where $\Delta_{\gamma}$ denotes the Grushin operator defined by
\begin{equation*}
\Delta_{\gamma} := \Delta_x + (1+\gamma)^2 |x|^{2\gamma}\Delta_y,
\end{equation*}
with $\gamma>0$, $z=(x,y)\in \rnn$, $N=n+m$, $n\geq 1$, $m\geq 1$, $\Omega \subset \rnn$ a smooth bounded domain, $\lambda>0$, $1<p<\infty$, and $\delta>0$.

\noindent The analysis depends on the exponent $p$, which may be subcritical, critical, or supercritical, that is, $p<2_\gamma^*-1$, $p=2_\gamma^*-1$, or $p>2_\gamma^*-1$, respectively, where $2_\gamma^*=\frac{2Q}{Q-2}$ is the critical Sobolev exponent associated with the Grushin operator, and $Q=m+(1+\gamma)n$ is the corresponding homogeneous dimension.

\medskip
\noindent \textbf{Keywords:} Grushin operator, critical exponent, singular nonlinearity, multiplicity of solutions, nonsmooth analysis, Brezis-Nirenberg problem.

\medskip
\noindent \textbf{Mathematics Subject Classification:} 35A21, 35B33, 35J20, 35J70.
\end{abstract}

\section{Introduction}

In this paper, we study a critical exponent problem associated with the Grushin operator. The origin of this operator can be traced back to Tricomi \cite{tricomi_grushin}, who considered the prototype
\begin{equation*}
    \frac{\partial^2}{\partial x^2} + x \frac{\partial^2}{\partial y^2} \quad \text{in } \real^2.
\end{equation*}
This was later generalized by Baouendi \cite{baouendi_phd_thesis} and subsequently by Grushin \cite{grushin_origin}, who introduced the operator in the form
\begin{equation*}
    \Gr := \Delta_x + (1+\gamma)^2 |x|^{2\gamma} \Delta_y,
\end{equation*}
where $\gamma > 0$, $z = (x,y) \in \rnn$, $N = n + m$, with $n \geq 1$ and $m \geq 1$.

In \cite{grushin_origin}, Grushin established the hypoellipticity of $\Delta_{\gamma}$ for $\gamma \in \mathbb{N}$, showing that if $f \in C^{\infty}(\Omega)$, then any distributional solution $u$ of $\Gr u = f$ in $\Omega$ also belongs to $C^{\infty}(\Omega)$. When $\gamma = 0$, the operator $\Gr$ reduces to the classical Laplacian. However, unlike the Laplacian, $\Gr$ is not uniformly elliptic due to the presence of the weight $|x|^{2\gamma}$, which induces degeneracy along the set
\begin{equation*}
\Sigma := \{0\} \times \real^m \subset \real^n \times \real^m.
\end{equation*}

Since $\Gr$ is uniformly elliptic away from $\Sigma$, we assume that $\Omega \cap \Sigma \neq \varnothing$. Furthermore, owing to the translation invariance of $\Gr$ in the $y$-variable, we may assume without loss of generality that $0 \in \Omega$.

The degeneracy set $\Sigma$ plays a crucial role in determining the qualitative behavior of solutions. This influence is clearly illustrated in \cite{ferrero_paolo_asymptotics}, where the authors established asymptotic properties and unique continuation results for solutions near points in $\Sigma \cap \Omega$. In particular, the behavior of solutions in the vicinity of the degeneracy set differs significantly from that of solutions to the classical Laplacian. Their approach relies on an Almgren-type monotonicity formula.

Beginning in the 1980s, Franchi and Lanconelli, through a series of works \cite{franchi_lanconelli_origin_lambda_operators,franchi_lanconelli_lambda_op_holder_cty,franchi_lanconelli_lambda_op_harnack}, introduced and studied a class of degenerate elliptic operators, of which the Grushin operator is a notable example. These operators are of the form
\begin{equation*}
\Delta_{\lambda}
=
\sum_{i=1}^{N}
\partial_{x_i}
\bigl(
\lambda_i^2 \, \partial_{x_i}
\bigr),
\end{equation*}
where the coefficients $\lambda_i$ satisfy suitable admissibility conditions.

Furthermore, the Grushin operator provides a fundamental example of an $X$-elliptic operator in the sense of \cite{lanconelli_kogoj_x_elliptic_harnack}. More precisely, it is uniformly $X$-elliptic with respect to the family of vector fields $X = (X_1, \dots, X_N)$ defined by
\begin{equation*} 
X_i = \frac{\partial}{\partial x_i}, \quad i = 1, \dots, n; 
\qquad
X_{n+j} = |x|^{\gamma}\,\frac{\partial}{\partial y_j}, \quad j = 1, \dots, m.
\end{equation*}
With this notation, the Grushin operator can be written in the sum-of-squares form
\begin{equation*}
\Delta_{\gamma} = \sum_{j=1}^{N} X_j^2.
\end{equation*}

Moreover, when $\gamma \in \mathbb{N}$, the Grushin operator belongs to the class of H\"{o}rmander operators, as it can be expressed as a sum of squares of smooth vector fields generating a Lie algebra of maximal rank at every point. In particular, the system $X = (X_1, \dots, X_N)$ satisfies H\"{o}rmander’s finite rank condition; see \cite{hormander_rankcondn}.

The Grushin operator exhibits a mixed homogeneity that depends on whether the underlying points lie on the degeneracy set or away from it. On the set $\Sigma$, the associated homogeneous dimension is given by
\begin{equation}\label{eq:homogeneous_dim}
    Q = m + (1+\gamma)n.
\end{equation}
In contrast, away from $\Sigma$, the operator becomes uniformly elliptic, and the homogeneous dimension coincides with the classical Euclidean one, namely $N = n + m$.

To describe the natural geometry induced by the operator, for each $t > 0$ we introduce the anisotropic dilations $\delta_t : \mathbb{R}^N \to \mathbb{R}^N$ associated with $\Delta_\gamma$, defined by
\begin{equation}\label{def:natural_dialations}
    \delta_t(x, y) = (t x, t^{1+\gamma} y), \quad \text{for all } (x, y) \in \mathbb{R}^N = \mathbb{R}^n \times \mathbb{R}^m.
\end{equation}
With respect to these dilations, the operator $\Delta_\gamma$ is homogeneous of degree two. More precisely, one has
\begin{equation*}
    \Gr (u \circ \delta_t) = t^2 (\Gr u) \circ \delta_t.
\end{equation*}
We denote by $T : \mathbb{R}^N \to \mathbb{R}^N$ the infinitesimal generator of the group $\{\delta_t\}_{t>0}$, that is, the vector field
\begin{equation*}
    T(x, y) = (x, (1+\gamma)y), \quad \text{for all } (x, y) \in \mathbb{R}^n \times \mathbb{R}^m.
\end{equation*}

A natural gauge associated with the Grushin operator is defined by
\begin{equation}\label{def:gauge_grushin}
    d(z) = \left( |x|^{2(1+\gamma)} + |y|^2 \right)^{\frac{1}{2(1+\gamma)}},
\end{equation}
where $z = (x, y) \in \mathbb{R}^n \times \mathbb{R}^m$. The function $d$ is homogeneous of degree one with respect to the dilations $\delta_t$ defined in \eqref{def:natural_dialations}. Accordingly, for any $z_0 \in \mathbb{R}^N$ and $r > 0$, we define the $d$-ball by
\begin{equation*}
    B_d(z_0, r) = \{ z \in \mathbb{R}^N : d(z - z_0) < r \}.
\end{equation*}
It follows directly from the homogeneity of $d$ that
\begin{equation*}
    \delta_t\bigl(B_d(z_0, r)\bigr) = B_d\bigl(\delta_t(z_0), t r\bigr).
\end{equation*}

In fact, the gauge function introduced in \eqref{def:gauge_grushin} provides the fundamental solution of the Grushin operator. More precisely, there exists a constant $C > 0$, depending only on $\gamma$ and $Q$, such that
\begin{equation*}
    \Gamma(z) = \frac{C}{d(z)^{Q-2}}
\end{equation*}
is the fundamental solution of $-\Delta_\gamma$ with pole at the origin.

The Grushin operator also admits a representation in polar-type coordinates adapted to its intrinsic geometry; see \cite{garofalo_grushin_polar}. Specifically, one has
\begin{equation*}
\Gr = \vartheta_{\gamma} \left( \frac{\partial^2}{\partial \rho^2} + \frac{Q-1}{\rho} \frac{\partial}{\partial \rho} + \left(\frac{\gamma + 1}{\rho}\right)^2 \mathcal{L}_{\theta} \right),
\end{equation*}
where $\rho = d(x,y)$, with $d$ as in \eqref{def:gauge_grushin}, and $\theta \in \mathbb{S}^{N-1} := \{ (x,y) \in \mathbb{R}^N : d(x,y) = 1 \}$. Moreover, $\vartheta_{\gamma}(x,y) = \frac{|x|^{2\gamma}}{d(x,y)^{2\gamma}}$ for $(x,y) \in \mathbb{R}^N \setminus \{0\}$, and $\mathcal{L}_{\theta}$ denotes the analogue of the Laplace--Beltrami operator acting on the Grushin unit sphere $\mathbb{S}^{N-1}$.

The homogeneous dimension $Q$ defined in \eqref{eq:homogeneous_dim} plays a fundamental role in the associated Sobolev embedding; see \cite{monti_morbidelli_grushin}. In particular, one has
\begin{equation*}
    \left( \int_{\mathbb{R}^N} |u|^{2_{\gamma}^*}\, dz \right)^{\frac{1}{2_{\gamma}^*}} 
    \leq C \left( \int_{\mathbb{R}^N} |\nabla_{\gamma} u|^2 \, dz \right)^{\frac{1}{2}},
\end{equation*}
for all $u \in C_c^{\infty}(\Omega)$, where $2_{\gamma}^* = \frac{2Q}{Q-2}$ denotes the critical Sobolev exponent.

This naturally leads to the study of critical exponent problems associated with the Grushin operator. Such problems trace their origin to the seminal work of Brezis and Nirenberg \cite{brezis_1983} for the Laplacian on bounded domains. In the Grushin setting, a systematic investigation was initiated by Monti and Morbidelli \cite{monti_morbidelli_grushin}, who studied the critical problem
\begin{equation}\label{eq:critical_grushin}
    - \Gr u = u^{2_{\gamma}^* - 1} \quad \text{in } \mathbb{R}^N.
\end{equation}
They established existence results and proved symmetry properties of solutions via a Kelvin transform adapted to the Grushin structure. Subsequently, Loiudice \cite{loiudice_asymtotic_behaviour} analyzed the asymptotic behavior of solutions to \eqref{eq:critical_grushin}.

These developments have stimulated extensive research on critical exponent problems in various settings; see, for instance, \cite{sanjit_bal2024multiplicity, bisci_malachini_secchi_bifurcation_multiplicity, zhang_yang_grushin_hardy, zhang_yang_grushin_hardy_asymptotic} and the references therein. In particular, an analogue of the Brezis--Nirenberg theorem for the Grushin operator was established in \cite{alves_tyagi_brezis-nirenberg}, where also a result of Berestycki--Lions type was obtained.

One important class of perturbations of critical exponent problems involves singular nonlinearities. Before addressing the corresponding critical problems, it is instructive to recall the purely singular case. A seminal contribution in this direction is due to Crandall \textit{et al.} \cite{crandaal}, who studied the model problem
\begin{equation}\label{eq:laplace_purely_singular}
   \left\{
	\begin{aligned}
		-\Delta u &= u^{-\delta}, \quad u>0 && \text{in } \Omega,\\
		u &= 0 && \text{on } \partial\Omega,
	\end{aligned}
    \right.
\end{equation}
for $\delta > 0$. The presence of the singular term has a strong impact on the regularity and integrability properties of solutions. In particular, it was shown in \cite{mckenna_integrability_singular} that the solution $u$ to \eqref{eq:laplace_purely_singular} belongs to $H_0^1(\Omega)$ if and only if $\delta \in (0,3)$, while $u \in C^1(\overline{\Omega})$ when $\delta \in (0,1)$. 

The study of critical problems with singular perturbations was initiated by Hirano \textit{et al.}, who established the existence of at least two solutions for $\delta \in (0,1)$ via the fibering map method on the Nehari manifold. Independently, Haito \cite{Haito} obtained analogous results using the sub--supersolution method. These results were later extended to the range $\delta \in (0,3)$ in \cite{adimurthi_giacomoni_strong_singular}. However, for $\delta \geq 3$, the singular term is no longer integrable, and classical variational techniques are no longer applicable. To overcome this difficulty, Hirano \textit{et al.} \cite{hirano_fully_singular} employed tools from nonsmooth analysis (see also \cite{corvellec_nonsmooth_analysis, jabri_nonsmooth_book} and the appendix of \cite{hirano_fully_singular}) together with a suitable translation of the problem, thereby establishing existence results for all $\delta > 0$ in the Brezis--Nirenberg framework.

Motivated by these developments, we investigate analogous problems in the Grushin setting. For the case $\delta < 1$, such problems have already been studied in \cite{sanjit_bal2024multiplicity}. More recently, in \cite{baldelli2025existence}, the authors considered a critical exponent problem for the Grushin operator involving both a singular term and a convective term in the whole space $\mathbb{R}^N$. Their approach relies on truncating the singular term, freezing the convective term, and applying variational methods, leading also to asymptotic estimates in the absence of convection.

In this work, we consider the following problem
\begin{equation}\label{eq:main_eq_grushin}\tag{$\mathcal{P}_{\lambda}$}
\begin{cases}
\begin{aligned}
    - \Delta_{\gamma} u &= \lambda  u^{2_\gamma^* - 1} + u^{-\delta}  && \text{in } \Omega,\\
    u &> 0 && \text{in } \Omega,\\
    u &= 0 && \text{on } \partial \Omega,
\end{aligned}
\end{cases}
\end{equation}
for all $\delta > 0$.

Since our aim is to treat the full range $\delta > 0$, we adopt the nonsmooth analysis approach developed in \cite{hirano_fully_singular}. We begin by studying a generalized problem for which existence is ensured under the presence of suitable subsolutions and supersolutions. This result serves as a key tool in the analysis of \eqref{eq:main_eq_grushin}. We then construct appropriate subsolutions and supersolutions, which naturally leads to the study of the associated purely singular problem without the critical term. Following \cite{canino_degiovanni_fully_singular}, we introduce a sequence of approximate problems and solve them. By employing $\Gamma$-convergence techniques (see \cite{dal_maso_gamma_convergence}), we pass to the limit and obtain a solution.

We subsequently turn to a Brezis--Nirenberg type problem associated with the Grushin operator, namely
\begin{equation}\label{eq:singular_semilinear_problem}\tag{$\mathcal{S}$}
\begin{cases}
-\Gr u = u^{-\delta} + \lambda u^{p} & \text{in } \Omega, \\
u = 0 & \text{on } \partial\Omega,
\end{cases}
\end{equation}
where $\Omega$ is a bounded domain, $\delta > 0$, $\lambda > 0$, and $p \geq 1$. The perturbation term $u^p$ may be subcritical, critical, or supercritical depending on the value of $p$, but it can be handled in a unified manner. To address the integrability issues, we consider a translated version of \eqref{eq:singular_semilinear_problem}. Combining the general framework developed earlier with blow-up analysis in the critical case, we establish the existence of solutions.

Finally, in the presence of both concave and convex nonlinearities, one expects multiplicity phenomena, as observed in the classical work of Ambrosetti, Brezis, and Cerami \cite{abc_laplacian}. In this spirit, we conclude by proving the existence of a second solution via a suitable linking argument, stated later in Section \ref{sec:preliminaries}.

Our main result is the following 

\begin{theorem}
Let $\Omega$ be a bounded domain in $\mathbb{R}^{N}$, let $\delta>0$, and let $p>1$. Then the following assertions hold.

\medskip

\noindent\textnormal{(I) Subcritical and critical cases} $(1< p \leq 2_{\gamma}^*)$.
There exists $\Lambda>0$ such that:
\begin{enumerate}
  \item[(1)] For every $\lambda \in (0,\Lambda)$, there exist at least two positive weak solutions of \eqref{eq:singular_semilinear_problem}$_{\lambda}$ belonging to $C_{loc}^{0,\alpha}(\Omega) \cap L^{\infty}(\Omega)$. Moreover, one of them, denoted by $u_{\lambda}$, satisfies:
  \begin{enumerate}
    \item[(i)] $w > u_{\lambda}$ in $\Omega$ for any positive weak solution $w \neq u_{\lambda}$ of \eqref{eq:singular_semilinear_problem}$_{\lambda}$;
    \item[(ii)] $u_{\lambda}$ is strictly increasing with respect to $\lambda \in (0,\Lambda)$, that is,
    \[
      u_{\mu} > u_{\lambda} \quad \text{in } \Omega \quad \text{for } \mu \in (\lambda,\Lambda).
    \]
  \end{enumerate}
  \item[(2)] For $\lambda = \Lambda$, there exists at least one positive weak solution of \eqref{eq:singular_semilinear_problem}$_{\Lambda}$ belonging to $C_{loc}^{0,\alpha}(\Omega) \cap L^{\infty}(\Omega)$.
  \item[(3)] For any $\lambda > \Lambda$, there exists no positive weak solution of \eqref{eq:singular_semilinear_problem}$_{\lambda}$.
\end{enumerate}

\medskip

\noindent\textnormal{(II) Supercritical case} $(p > 2_{\gamma}^*).$
There exists $\Lambda>0$ such that:
\begin{enumerate}
 \item[(1)] For every $\lambda \in (0,\Lambda)$, there exists at least one positive weak solution $u_{\lambda} \in C_{loc}^{0,\alpha}(\Omega) \cap L^{\infty}(\Omega)$ of \eqref{eq:singular_semilinear_problem}$_{\lambda}$ satisfying:
  \begin{enumerate}
    \item[(i)] $w > u_{\lambda}$ in $\Omega$ for any positive weak solution $w \neq u_{\lambda}$ of \eqref{eq:singular_semilinear_problem}$_{\lambda}$ belonging to $L^{\infty}(\Omega)$;
    \item[(ii)] $u_{\lambda}$ is strictly increasing with respect to $\lambda \in (0,\Lambda)$, that is,
    \[
      u_{\mu} > u_{\lambda} \quad \text{in } \Omega \quad \text{for } \mu \in (\lambda,\Lambda).
    \]
  \end{enumerate}
  \item[(2)] For $\lambda = \Lambda$, there exists a positive weak solution of \eqref{eq:singular_semilinear_problem}$_{\Lambda}$ belonging to $L^{p+1}(\Omega)$.
  \item[(3)] For any $\lambda > \Lambda$, there exists no positive weak solution of \eqref{eq:singular_semilinear_problem}$_{\lambda}$ belonging to $L^{\infty}(\Omega)$.
\end{enumerate}
\end{theorem}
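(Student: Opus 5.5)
I would follow the nonsmooth strategy of Hirano--Saccon--Shioji for the Laplacian, adapted to the Grushin setting. The starting point is the purely singular problem $-\Gr u = u^{-\delta}$, $u=0$ on $\partial\Omega$. I would solve it by the approximation scheme of Canino--Degiovanni: solve $-\Gr u_k = (u_k+1/k)^{-\delta}$, obtain monotone bounds, and pass to the limit using $\Gamma$-convergence. This yields a solution $\underline{u}\in L^\infty(\Omega)\cap C^{0,\alpha}_{loc}(\Omega)$ with $\underline u\ge c\,\mathrm{dist}$-type positivity on compacts, and $\underline{u}^{(\delta+1)/2}$ or a suitable power lying in $\spacegr$.

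Since for $\delta\ge 3$ the singular term is not integrable against $\spacegr$ functions, I would then translate the problem. Writing $u=\underline u+v$, the solutions of \eqref{eq:singular_semilinear_problem} with $u\ge\underline u$ correspond to critical points, in the sense of weak slope, of a functional on $\spacegr$:
\[
I_\lambda(v)=\frac12\int|\gradgr v|^2-\int G(z,v),
\]
where $G$ is the primitive of $(\underline u+v)^{-\delta}-\underline u^{-\delta}+\lambda(\underline u+v)^p$, extended suitably for $v<-\underline u$. The key general tool, proved first, is a sub/supersolution theorem. If $\underline w\le\overline w$ are a weak sub- and supersolution, then $I_\lambda$ restricted to the order interval $[\underline w,\overline w]$ attains a minimum, and this minimum is a weak solution; moreover, if the sub/supersolutions are strict, it is a local $\spacegr$-minimizer of the unconstrained functional. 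The proof is via truncation, weak slope estimates, and a comparison principle for $-\Gr$ with decreasing singular terms, using the weak maximum principle valid for the Grushin operator.

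Next I would define $\Lambda=\sup\{\lambda>0:\eqref{eq:singular_semilinear_problem}_\lambda \text{ has a (bounded) weak solution}\}$. I would argue as follows.
\begin{enumerate}
\item $\Lambda>0$: for small $\lambda$, take $\overline w=\underline u+ M e$, with $e$ solving $-\Gr e=1$, as a supersolution.
\item $\Lambda<\infty$: test against the first eigenfunction $\varphi_1$ of $-\Gr$, using $t^{-\delta}+\lambda t^p\ge C\lambda^{1/(p+\delta)}t$, to get a contradiction for large $\lambda$.
\item For $\lambda<\mu<\Lambda$, a solution at level $\mu$ is a strict supersolution at level $\lambda$. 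The minimal solution $u_\lambda$ is obtained by monotone iteration starting from $\underline u$.
\end{enumerate}
The minimality and strict monotonicity in (i), (ii) follow from this iteration together with the strong maximum principle for $\Gr$, which holds via the Harnack inequality of Franchi--Lanconelli. Local Hölder continuity also comes from this $X$-elliptic theory; $L^\infty$ bounds come from Moser iteration using the Grushin Sobolev inequality for $p\le 2^*_\gamma$. In the supercritical case the solutions are bounded by construction, which explains the restriction to $L^\infty$ solutions in (II)(1)(i) and (II)(3).

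At $\lambda=\Lambda$, I would take $\lambda_k\uparrow\Lambda$ and the solutions $u_{\lambda_k}$. Because these are local minimizers, the second variation is nonnegative, and combined with the equation this gives uniform $\spacegr$ bounds (for $p>2^*_\gamma$ only $L^{p+1}$ bounds). The limit is then a solution, giving (2) in both cases.

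For the second solution when $p\le 2^*_\gamma$, $v=0$ (corresponding to $u_\lambda$) is a local minimizer of $I_\lambda$. I would apply a mountain pass/linking theorem for lower semicontinuous functionals (weak slope version) around it. If no second critical point exists, the Palais--Smale condition holds below the compactness threshold $c^*=\frac1Q\lambda^{-(Q-2)/2}S_\gamma^{Q/2}$. The main obstacle is the critical case $p=2^*_\gamma$: I must show the mountain pass level lies strictly below $c^*$. For this I would use truncations $\eta\,U_\varepsilon$ of the Monti--Morbidelli extremals concentrated at $0\in\Sigma\cap\Omega$ and estimate $I_\lambda(tU_\varepsilon)$. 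The gain comes from the cross term $\int u_\lambda^{p}U_\varepsilon\gtrsim\varepsilon^{(Q-2)/2}$, with $u_\lambda$ bounded below near $0$, which dominates the $O(\varepsilon^{Q-2})$ error terms. This requires careful handling of the singular part of $G$, which I would control using the convexity of $t\mapsto t^{-\delta}$ to show it contributes a nonpositive term. Compactness below $c^*$ is then obtained via Brezis--Lieb together with the Grushin Sobolev constant $S_\gamma$.
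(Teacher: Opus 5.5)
Your overall architecture matches the paper's: translation by the solution of the purely singular problem, a nonsmooth sub/supersolution theorem on order intervals, $\Lambda$ via strict supersolutions, a local minimizer followed by a linking argument below $\frac1Q\lambda^{-(Q-2)/2}S_\gamma^{Q/2}$, and bubbles concentrated at $0\in\Sigma$. However, the decisive estimate in the critical case is set up incorrectly, and a link between your two constructions of $u_\lambda$ is missing.

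\textbf{The critical-level estimate.} Since $u_\lambda$ solves the equation, the first-order term $t\int_\Omega\bigl(\nabla_\gamma u_\lambda\cdot\nabla_\gamma U_\varepsilon-u_\lambda^{-\delta}U_\varepsilon-\lambda u_\lambda^{p}U_\varepsilon\bigr)\,dz$ vanishes in $I(u_\lambda+tU_\varepsilon)-I(u_\lambda)$. So the cross term $\int_\Omega u_\lambda^{p}U_\varepsilon\,dz$ you rely on gives no gain at all.

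The gain must come from the next term of the expansion, $\int_\Omega u_\lambda U_\varepsilon^{2^*_\gamma-1}\,dz\ge c\,\varepsilon^{(Q-2)/2}$. This is extracted via an inequality such as $\frac{(r+s)^{2^*_\gamma}-r^{2^*_\gamma}}{2^*_\gamma}-r^{2^*_\gamma-1}s\ge\frac{s^{2^*_\gamma}}{2^*_\gamma}+\beta\,r\,s^{2^*_\gamma-1}$ for $0\le r\le M$, $s\ge1$, applied with $r=u_\lambda$, $s=tU_\varepsilon$ where $tU_\varepsilon\ge1$. Both integrals are of order $\varepsilon^{(Q-2)/2}$, but only the second survives.

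The singular part also has the opposite sign from what you assert. Because $s\mapsto s^{-\delta}$ is decreasing, its energy is convex, so the remainder $G(z,r+s)-G(z,r)-g(z,r)s$ is nonnegative and \emph{raises} the level. It must be bounded above by something $o(\varepsilon^{(Q-2)/2})$. The quadratic bound that convexity of $t^{-\delta}$ gives, $\frac{\delta}{2}m^{-\delta-1}t^2\int U_\varepsilon^2\sim\varepsilon^2$ (for $Q>4$), is not enough when $Q\ge6$. The paper instead uses $u_\lambda\ge m>0$ on the support of the cut-off to get $G(z,r+s)-G(z,r)-g(z,r)s\le C\min\{s,s^2\}\le\alpha s^q$ with $1<q<\min\{2,Q/(Q-2)\}$. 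Then $\int_\Omega U_\varepsilon^q\,dz=O(\varepsilon^{q(Q-2)/2})=o(\varepsilon^{(Q-2)/2})$.

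\textbf{Minimal solution versus local minimizer.} You obtain $u_\lambda$ by monotone iteration, which gives $u_\lambda\le w$ for all solutions $w$. But the local-minimizer property you use for the linking step, and for the stability bound at $\lambda=\Lambda$, belongs to the minimizer $m$ of the energy on $[\underline u,z_\mu]$. A priori you only know $m\ge u_\lambda$, so you need an argument identifying the two.

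One argument that works: let $u\le v$ be solutions with $u\ne v$, and set $h(t)=I(u+t(v-u))$. Then $h'(0)=h'(1)=0$, and $h'$ is strictly concave on $[0,1]$, since $g(z,\cdot)$ is concave and $s\mapsto(s+u_0)^p$ is strictly convex. Hence $h'>0$ on $(0,1)$, so $I(v)>I(u)$, and therefore $m=u_\lambda$.

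This energy comparison is exactly what the paper uses (Lemmas 8--9 of Hirano--Saccon--Shioji). There $u_\lambda$ is \emph{defined} as the order-interval minimizer, and minimality is deduced from the comparison with no iteration. Your iteration also presupposes solvability and a comparison principle for $-\Delta_\gamma w=w^{-\delta}+\theta$ with $\theta\ge0$ bounded. That requires the same machinery as the purely singular problem: the paper's auxiliary problem with fixed right-hand side and its comparison lemma.
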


A further difficulty in our analysis arises from the degeneracy of the Grushin operator, which hinders both the positivity and regularity of solutions. To overcome this issue, we first establish a Strong Maximum Principle (see Lemma \ref{lem:SMP_grushin}) by deriving a suitable logarithmic estimate. This is followed by a uniform estimate result (Lemma \ref{lem:uniform_estimate_grushin}), which plays a crucial role in controlling the behavior of solutions. In addition, we make use of a regularity result from Theorem 5.5 in \cite{lanconelli_holder_Xelliptic}.

The paper is organized as follows. In Section \ref{sec:preliminaries}, we introduce the basic notation, recall essential tools from nonsmooth analysis, and establish the Strong Maximum Principle. In Section \ref{sec:brezis_nirenberg_general}, we study the Brezis-Nirenberg problem for the Grushin operator in a general framework and prove results of Perron-type. Section \ref{sec:purely_singular} is devoted to the analysis of the purely singular problem, where we establish the existence of solutions. Finally, in Section \ref{sec:brezis_nirenberg_grushin}, we address the Brezis-Nirenberg problem in the Grushin setting, proving the existence of a positive solution as well as a multiplicity result via a linking theorem.
o'

\section{Preliminaries} \label{sec:preliminaries}

We start this section by introducing the following notation, which will be used throughout the paper:
\begin{enumerate}
    \item[$(i)$] $C$ denotes a generic positive constant, whose value may change from one line to another.
    
    \item[$(ii)$] For any measurable set $A \subset \rnn$, the symbol $|A|$ stands for its $N$-dimensional Lebesgue measure.
     
    \item[$(iv)$] For $1 \leq p < \infty$, the norm in $L^p(\om)$ is given by $\lv u \rv_{L^p(\om)}^p = \lv u \rv_{p}^p := \int_{\om} |u|^p\,dz.$
    
    \item[$(v)$] We define $C_c^\infty(\om):=\{u \in C^\infty(\om) : \operatorname{supp}(u)\Subset \om\}$.
   
\end{enumerate}

Next, we introduce the spaces to study problems associated with the Grushin operator. 
For an open bounded subset $\Omega \subseteq \mathbb{R}^N$, we have 
\begin{equation*}
    H_\gamma^1(\Omega) = \left\{ u \in L^2(\Omega) : |\nabla_\gamma u| \in L^2(\Omega) \right\}
\end{equation*}
endowed with the inner product $\langle \gradgr u, \gradgr v\rangle:= \int_{\om} [\grad_x u \cdot \grad_x v + (1+\gamma)^2 |x|^{2\gamma} \grad_y u \cdot \grad_y v ]\,dz$. Hence, the associated norm is denoted and defined as
\begin{equation*}
    \lv  u \rv_{\gamma}^2 := \int_{\om} | \gradgr u (z) |^2 \, dz = \int_{\om} [ |\grad_x u|^2 + (1 + \gamma )^2 |x|^{2 \gamma} |\grad_y u|^2] \, dz.
\end{equation*}

The space $\spacegr$ is defined as the completion of $C_c^\infty(\Omega)$ with respect to the norm 
\begin{equation*}
    \lv \cdot \rv^2 := \int_{\om} | \gradgr  \cdot |^2 dz.
\end{equation*}
Moreover, by Proposition $2.1$ of \cite{alves_tyagi_brezis-nirenberg}, we have the embedding
\begin{equation*}
   \spacegr \hookrightarrow L^q(\om),
\end{equation*}
for any $q \in [2, 2_\gamma^*]$ and  $H_\gamma^1(\Omega) \hookrightarrow\hookrightarrow L^q(\Omega)$, for every $q \in [1, 2_\gamma^*)$. This mainly comes from the Sobolev embedding inequality, given by
\begin{equation}\label{ineq:Sobolev_embedding_grushin}
    \left( \int_{\om} |u|^{2_{\gamma}^*} \, dz \right)^{2/2_{\gamma}^*} \leq C \int_{\om} |\gradgr u |^2 dz, \quad \text{for all } u \in C_{c}^{\infty}(\real). 
\end{equation}
One can define the best constant in the previous inequality \eqref{ineq:Sobolev_embedding_grushin}, known as Sobolev constant, as
\begin{equation}\label{best_constant_grushin}
    S_{\gamma} := \inf_{\substack{u \in D_0^\gamma(\om) \\ u \neq 0}} \frac{\int_{\mathbb{R}^N} |\nabla_\gamma u|^2 \, \mathrm{d}z}{\left( \int_{\mathbb{R}^N} |u|^{2_\gamma^*} \, \mathrm{d}z \right)^{2/2_\gamma^*}},
\end{equation}
where $D_0^\gamma(\om)$ is the closure of the space $C_c^{\infty}(\om)$ with respect to the norm $\lv . \rv$. It is independent of domain $\om$ and achieved only when $\om = \rnn$. Moreover, we denote by $H_0^{-1,\gamma}(\om)$ the dual space of $\spacegr$.\\


Coming back to the optimal constant defined in \eqref{best_constant_grushin}, its minimizers satisfies the following equation 
\begin{equation}\label{eq:critical_exponent_grushin}
-\Delta_\gamma v = v^{2_{\gamma}^*-1} \quad \text{ in } \rnn,
\end{equation}
and they are invariant under the translations in the $y$ variable and the rescaling defined in \eqref{def:natural_dialations}. Indeed, let $\rho > 0$ and define
\begin{equation*}
    v^{e,\rho}(z) := \rho^{\frac{Q-2}{2}} v(\rho x, \rho^{1+\gamma} y + e), 
\end{equation*}
where $z = (x, y) \in \mathbb{R}^{m+n}$ and $e \in \mathbb{R}^n$. If $v$ satisfies \eqref{eq:critical_exponent_grushin}, then $v^{e,\rho}$ also satisfies \eqref{eq:critical_exponent_grushin}. Also, it is easy to verify the following invariances
\begin{equation*}
    \|\nabla_\gamma v^{e,\rho}\|_{L^2(\mathbb{R}^N)} = \|\nabla_\gamma v\|_{L^2(\mathbb{R}^N)} \text{ and } \|v^{e,\rho}\|_{L^{2_\gamma^*}(\mathbb{R}^N)} = \|v\|_{L^{2_\gamma^*}(\mathbb{R}^N)}.
\end{equation*}


Moreover, the asymptotic estimate of $v$ is also known and given by 
\begin{equation}\label{eq:asymptotic_estimate_fundamental}
    \frac{C_1}{d(z)^{Q-2}} \leq v(z) \leq \frac{C_2}{d(z)^{Q-2}}.
\end{equation}
  For $\varepsilon > 0$, take $v_{\varepsilon} = v^{0, \varepsilon^{-1}}$ and consider the $d$-ball $B_{2R}(z_0) \Subset \Omega \setminus \Sigma$. Choose a function $\zeta \in C_c^\infty(B_{2R}(z_0))$ such that $0 \le \zeta \le 1$ in $B_{2R}(z_0)$ and $\zeta = 1$ in $B_R(z_0)$. Now, define
\begin{equation}\label{eq:grushin_bubbles}
    u_\varepsilon(z) = \zeta(z)v_\varepsilon(z). 
\end{equation}
Then we have the following blow-up estimates borrowed from \cite[Proposition $4.2$]{alves_tyagi_brezis-nirenberg} and \cite[Proposition $0.4$]{sanjit_bal2024multiplicity} which are crucial for our analysis. 

\begin{lemma}\label{lem:blow_up_grushin}
As $\varepsilon \to 0$, the following conclusions hold:
\begin{itemize}
    \item[(i)] $\displaystyle \int_\Omega |\grad_\gamma u_\varepsilon|^2 \, dz \le \mathcal{S}_\lambda^{\frac{Q}{2}} + O(\varepsilon^{Q-2})$,
    \item[(ii)] $\displaystyle \int_\Omega |u_\varepsilon|^{2_\gamma^*} \, dz = \mathcal{S}_\lambda^{\frac{Q}{2}} + O(\varepsilon^Q)$,
    \item[(iii)] $\displaystyle \int_\Omega |u_\varepsilon|^2 dz \ge \begin{cases} 
        C\varepsilon^2 + O(\varepsilon^{Q-2}) \text{ if } Q > 4 \\
        C\varepsilon^2 |\ln \varepsilon| + O(\varepsilon^2) \text{ if } Q = 4 \\
        C\varepsilon^{Q-2} + O(\varepsilon^2) \text{ if } Q < 4.
    \end{cases}$
    \item[(iv)] For $\frac{2_\gamma^*}{2} < \lambda < 2_\gamma^*$, we have
    \[
        \int_\Omega |u_\varepsilon|^\lambda \, dz = O(\varepsilon^{Q - \frac{\lambda(Q-2)}{2}}).
    \]
\end{itemize}
\end{lemma}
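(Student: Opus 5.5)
The plan is the standard Brezis--Nirenberg computation, carried out in the Grushin geometry. The fact that makes it work is that $B_{2R}(z_0)\Subset\Omega\setminus\Sigma$ is a region where the bubble $v_\varepsilon$ is essentially a rescaled copy of the extremal, and where the cutoff $\zeta$ sits away from the concentration point.

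One caveat first. For the statement to be consistent, the concentration point of $v_\varepsilon=v^{0,\varepsilon^{-1}}$, namely the origin (or a point of $\Sigma$ after a $y$-translation), must be where $\zeta\equiv1$. So I will read the construction as placing the cutoff around the concentration point. The constant $\mathcal S_\lambda$ is understood as the best constant $S_\gamma$, normalized so that $\int_{\mathbb R^N}|\nabla_\gamma v|^2=\int_{\mathbb R^N}v^{2^*_\gamma}=S_\gamma^{Q/2}$; this follows from \eqref{eq:critical_exponent_grushin} together with the fact that $v$ attains \eqref{best_constant_grushin}.

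\textbf{(i) Gradient.} I would expand
\[
|\nabla_\gamma(\zeta v_\varepsilon)|^2=\zeta^2|\nabla_\gamma v_\varepsilon|^2+2\zeta v_\varepsilon\nabla_\gamma\zeta\cdot\nabla_\gamma v_\varepsilon+v_\varepsilon^2|\nabla_\gamma\zeta|^2 .
\]
The first term is at most $\int_{\mathbb R^N}|\nabla_\gamma v_\varepsilon|^2=S_\gamma^{Q/2}$, by the invariance under $\delta_t$ noted above. The other two terms live on the annular region $\{R\le d\le 2R\}$, where $\nabla_\gamma\zeta$ is supported. There I use the scaling together with \eqref{eq:asymptotic_estimate_fundamental}:
\[
v_\varepsilon(z)=\varepsilon^{-(Q-2)/2}v(\delta_{1/\varepsilon}z)\le C\varepsilon^{(Q-2)/2}d(z)^{2-Q}.
\]
I also need the analogous decay $|\nabla_\gamma v(z)|\le Cd(z)^{1-Q}$. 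I would obtain it from the explicit form of the extremals or from interior estimates on $d$-annuli combined with homogeneity. This gives $|\nabla_\gamma v_\varepsilon|\le C\varepsilon^{(Q-2)/2}$ on the annulus, so both cross terms are $O(\varepsilon^{Q-2})$.

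\textbf{(ii) Critical norm.} Write
\[
\int_\Omega|u_\varepsilon|^{2^*_\gamma}=S_\gamma^{Q/2}-\int_{\mathbb R^N}(1-\zeta^{2^*_\gamma})v_\varepsilon^{2^*_\gamma}.
\]
The error is bounded by $\int_{d>R}v_\varepsilon^{2^*_\gamma}$. After the change of variables $z=\delta_\varepsilon w$ this becomes $\int_{d(w)>R/\varepsilon}v^{2^*_\gamma}\le C\int_{R/\varepsilon}^\infty\rho^{-2Q}\rho^{Q-1}\,d\rho=O(\varepsilon^Q)$. Here I use that the Lebesgue measure of $d$-balls scales as $\rho^Q$, i.e.\ polar coordinates in the gauge.

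\textbf{(iii)–(iv) Lower-order norms.} For $q<2^*_\gamma$ the same change of variables gives
\[
\int_\Omega |u_\varepsilon|^q\ \ge\ \int_{B_R}v_\varepsilon^q=\varepsilon^{Q-q(Q-2)/2}\int_{d(w)<R/\varepsilon}v^q\,dw ,
\]
with the matching upper bound over $B_{2R}$. By \eqref{eq:asymptotic_estimate_fundamental}, $\int_{d<R/\varepsilon}v^q$ behaves like $\int_1^{R/\varepsilon}\rho^{Q-1-q(Q-2)}\,d\rho$.

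For (iv) take $q\in(2^*_\gamma/2,2^*_\gamma)$. Then $q(Q-2)>Q$, so the integral converges and the result is $O(\varepsilon^{Q-q(Q-2)/2})$.

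For (iii) take $q=2$. The same integral converges when $Q>4$, giving $C\varepsilon^2$. When $Q=4$ it is logarithmic, giving $C\varepsilon^2|\ln\varepsilon|$. When $Q<4$ it grows like $\varepsilon^{-(4-Q)}$, giving $C\varepsilon^{Q-2}$. The $O(\cdot)$ corrections in each case come from the region near $d\sim1$ and from the cutoff.

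\textbf{Main obstacle.} The delicate point is the pointwise gradient decay $|\nabla_\gamma v|\le Cd^{1-Q}$ needed in (i). It is not stated in the paper, and the Grushin extremals are not explicit for general $n,m,\gamma$. I would try to derive it by applying Caccioppoli and Hölder estimates to $v$ on dyadic $d$-annuli $\{\rho<d<2\rho\}$, rescaled by $\delta_{1/\rho}$ to unit size, and then using \eqref{eq:asymptotic_estimate_fundamental}. Failing that, an $L^2$ bound $\int_{\{d>R/\varepsilon\}}|\nabla_\gamma v|^2=O(\varepsilon^{Q-2})$ would already suffice, and this follows from a Caccioppoli inequality on the exterior region.
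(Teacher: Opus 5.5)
Your proposal is correct and is the standard Brezis--Nirenberg computation, which the paper does not carry out itself but imports by citation from \cite{alves_tyagi_brezis-nirenberg} and \cite{sanjit_bal2024multiplicity}. Your reinterpretation of the cutoff, as equal to $1$ near the concentration point $0\in\Sigma\cap\Omega$, is genuinely needed: with $\operatorname{supp}\zeta\Subset\Omega\setminus\Sigma$ as literally written one gets $\int_\Omega|u_\varepsilon|^{2^*_\gamma}\,dz=O(\varepsilon^{Q})$, so (ii) fails.

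The gradient-decay obstacle you flag can also be bypassed altogether (your Caccioppoli fallback works too). Testing $-\Delta_\gamma v_\varepsilon=v_\varepsilon^{2^*_\gamma-1}$ with $\zeta^2 v_\varepsilon$ gives $\int_\Omega|\nabla_\gamma(\zeta v_\varepsilon)|^2\,dz=\int_\Omega\zeta^2 v_\varepsilon^{2^*_\gamma}\,dz+\int_\Omega v_\varepsilon^2|\nabla_\gamma\zeta|^2\,dz\le S_\gamma^{Q/2}+O(\varepsilon^{Q-2})$. This uses only the pointwise bound on $v_\varepsilon$ over the annulus where $\nabla_\gamma\zeta\neq0$.
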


Next, we have the polar coordinates formula for $d$-radial functions. 
\begin{proposition}
Let $0 \le r_1 < r_2$ and let $P : [r_1, r_2] \to \mathbb{R}$ be any measurable function. Then the following identity holds:
\begin{equation*}
\int_{B_{r_2}(0) \setminus B_{r_1}(0)} P\big(d(z)\big)\, dz
=
Q \, |B_1(0)| \int_{r_1}^{r_2} P(\rho)\, \rho^{Q - 1}\, d\rho .
\end{equation*}   
\end{proposition}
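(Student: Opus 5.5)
We prove the polar coordinates formula using only the homogeneity of $d$ under the dilations $\delta_t$ from \eqref{def:natural_dialations} and the Jacobian of $\delta_t$; no explicit parametrization of the Grushin unit sphere is needed.

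\textbf{Step 1: volume of $d$-balls.} The Jacobian determinant of $\delta_t$ is $t^{n}\,t^{(1+\gamma)m}$. We want this to equal $t^Q$ with $Q = m+(1+\gamma)n$ as in \eqref{eq:homogeneous_dim}. This requires matching the roles of $n$ and $m$ with the convention in \eqref{eq:homogeneous_dim}, so the exponent $1+\gamma$ must fall on the block of dimension $n$ in $Q$. We adopt the convention under which the Lebesgue measure scales by $t^Q$. Since $d$ is $\delta_t$-homogeneous of degree one, $\delta_t(B_1(0)) = B_t(0)$. The change of variables formula then gives
\[
|B_t(0)| = t^Q |B_1(0)|.
\]

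\textbf{Step 2: the distribution function of $d$.} Let $\mu$ be the push-forward of Lebesgue measure on $\mathbb{R}^N$ under $z\mapsto d(z)$, so that $\mu([0,\rho)) = |B_\rho(0)| = |B_1(0)|\rho^Q$. This function of $\rho$ is absolutely continuous. Hence $\mu$ is the measure on $[0,\infty)$ with density $Q|B_1(0)|\rho^{Q-1}\,d\rho$. Note that $\{d=\rho\}$ has measure zero, since its measure is the difference of the volumes of the closed and open balls, both equal to $|B_1(0)|\rho^Q$.

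\textbf{Step 3: the identity for general $P$.} By the abstract change of variables for push-forward measures,
\[
\int_{\{r_1\le d<r_2\}} P(d(z))\,dz = \int_{[r_1,r_2)} P\,d\mu = Q|B_1(0)|\int_{r_1}^{r_2}P(\rho)\rho^{Q-1}\,d\rho.
\]
This holds first for indicator functions of intervals, where it is exactly Step 2. It then extends to simple functions and, by monotone convergence, to nonnegative measurable $P$. For general $P$ we split $P = P^+ - P^-$, whenever either side makes sense. Finally, the boundary sphere $\{d = r_1\}$ is null, so $B_{r_2}\setminus B_{r_1}$ and $\{r_1\le d<r_2\}$ differ by a null set and the integrals agree.

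\textbf{Main obstacle.} The only delicate point is the bookkeeping in Step 1. One must check that the Jacobian exponent of $\delta_t$ coincides with the paper's $Q$, which depends on which variable carries the factor $1+\gamma$ in \eqref{eq:homogeneous_dim}. We also need $|B_1(0)|$ to be finite and positive. This holds because $d$ is continuous and satisfies $d(z)\to\infty$ as $|z|\to\infty$, so $B_1(0)$ is a bounded open neighbourhood of the origin. Everything else is routine measure theory.
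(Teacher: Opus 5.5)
Your argument is correct. The paper states this proposition without any proof, so there is no route of the paper's to compare against; your argument supplies the missing proof. The scaling $|B_t(0)|=t^Q|B_1(0)|$ identifies the push-forward $d_\#\mathcal{L}^N$ with $Q|B_1(0)|\rho^{Q-1}\,d\rho$ on half-open intervals, and hence on all Borel sets. The formula then follows by the standard indicator/simple-function/monotone-convergence extension.

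The caveat you raise in Step 1 is more than bookkeeping, and you should state it decisively. With the paper's conventions ($x\in\mathbb{R}^n$ dilated by $t$, $y\in\mathbb{R}^m$ dilated by $t^{1+\gamma}$) one has $\det D\delta_t=t^{\,n+(1+\gamma)m}$. Taking $P\equiv 1$ and $r_1=0$ shows that the identity holds if and only if $Q=n+(1+\gamma)m$. So with $Q=m+(1+\gamma)n$ as written in \eqref{eq:homogeneous_dim}, the statement is false whenever $n\neq m$: the paper's formula for $Q$ swaps $n$ and $m$ relative to its own choice $x\in\mathbb{R}^n$, $y\in\mathbb{R}^m$. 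Rather than ``adopting a convention'', say that $Q$ must be the Jacobian exponent of $\delta_t$, and that the proposition holds with that value.

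Two small points. First, $B_{r_2}(0)\setminus B_{r_1}(0)=\{r_1\le d<r_2\}$ exactly, since the balls are open, so the remark about the null sphere $\{d=r_1\}$ is unnecessary. Second, if $P$ is only Lebesgue (not Borel) measurable, note that $P\circ d$ is still measurable. This holds because $d_\#\mathcal{L}^N$ is absolutely continuous with respect to $d\rho$, so preimages under $d$ of Lebesgue-null sets are null.
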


\subsection{Nonsmooth Analysis}
As we are interested in problem with a singular nonlinearity, we need some tools of non-smooth analysis to deal with it. First we introduce the Fr\'{e}chet subdifferential point for a non-smooth functional that will work as the notion of the critical point.  

Let $H$ be a Hilbert space and let $I : H \to (-\infty, \infty]$ be a functional with its effective domain defined as 
\[
\mathcal{D}(I) := \{ u \in H : I(u) < \infty \}.
\]
If $I \not\equiv \infty$, we call it to be proper. Now, let $I : H \to (-\infty, \infty]$ be a proper, lower semicontinuous functional.
Then for every $u \in \mathcal{D}(I)$, the Fr\'{e}chet subdifferential of $I$ at $u$ is given by
\begin{equation*}
\partial^{-} I(u)
=
\left\{
\alpha \in H :
\lim_{v \to u}
\frac{ I(v) - I(u) - \langle \alpha, v - u \rangle }
{\|v - u\|}
\ge 0
\right\}.
\end{equation*}
The subdifferential satisfies the following properties. 
\begin{enumerate}
    \item $\partial^{-} I(u)$ may be empty; however, it is always closed and convex.
    \item If $u \in \mathcal{D}(I)$ is a local minimizer of $I$, then $0 \in \partial^{-} I(u)$.
    \item If $I_0 : H \to (-\infty, \infty]$ is a proper, lower semicontinuous, convex functional, $I_1 : H \to \mathbb{R}$ is of class $C^1$, and $I = I_0 + I_1$, then the Fréchet subdifferential satisfies
\begin{equation*}
\partial^{-} I(u)
=
\partial I_0(u) + \gradgr I_1(u)
\qquad
\text{for every } u \in \mathcal{D}(I) = \mathcal{D}(I_0),
\end{equation*}
where $\partial I_0$ denotes the usual subdifferential of the convex functional
$I_0$.
\end{enumerate}

Finally, to work with the set $\partial^{-} I$, for every $u \in H$, we define the quantity $\tnorm{\partial^{-} I(u)}$ by
\begin{equation*}
\tnorm{\partial^{-} I(u}
=
\begin{cases}
\displaystyle
\min \{ \|\alpha\| : \alpha \in \partial^{-} I(u) \},
& \text{if } \partial^{-} I(u) \neq \varnothing, \\[1ex]
\infty,
& \text{if } \partial^{-} I(u) = \varnothing .
\end{cases}
\end{equation*}

For functionals with singular nonlinearities, boundedness of the minimizing sequence is not easy to get. To proceed, we require the following definition
\begin{definition}[Cerami Condition]
Let $I : H \to (-\infty, \infty]$ be a proper, lower semicontinuous functional. We say that $I$ satisfies the Cerami condition at level $c$,i.e., $I$ satisfies $(\mathrm{CPS})_c$, if every sequence $\{u_k\} \subset \mathcal{D}(I)$ satisfying
\begin{equation*}
I(u_k) \to c
\qquad \text{and} \qquad
\left( 1 + \|u_k\| \right)
\tnorm{\partial^- I(u_k)} \to 0
\end{equation*}
admits a convergent subsequence in $H$.    
\end{definition}

To obtain a multiplicity result, we apply the following linking theorem proved in Appendix A of \cite{hirano_fully_singular}. 
 
\begin{theorem}\label{thm:Linking_theorem}
Let $H$ be a Hilbert space. Let
$I_0 : H \to (-\infty, \infty]$ be a proper, lower semicontinuous, convex
functional, let
$I_1 : H \to \mathbb{R}$ be of class $C^1$, and define
$I = I_0 + I_1$.
Let $\mathbb{D}^N$ and $\mathbb{S}^{N-1}$ denote respectively the closed unit ball and its boundary sphere in $\mathbb{R}^N$. Assume that $\psi : \mathbb{S}^{N-1} \to \mathcal{D}(I)$ is a continuous mapping such that
\begin{equation*}
\Phi :=
\left\{
\varphi \in C(\mathbb{D}^N, \mathcal{D}(I)) :
\varphi|_{\mathbb{S}^{N-1}} = \psi
\right\}
\neq \varnothing .
\end{equation*}
Let $A$ be a relatively closed subset of $\mathcal{D}(I)$ such that it satisfies the following
\begin{itemize}
    \item $A \cap \psi(\mathbb{S}^{N-1}) = \varnothing$,
    \item $A \cap \varphi(\mathbb{D}^N) \neq \varnothing$ for all $\varphi \in \Phi$,
    \item $\inf I(A) \ge \sup I\bigl( \psi(\mathbb{S}^{N-1}) \bigr)$.
\end{itemize}
Assume furthermore that
\begin{equation*}
c :=
\inf_{\varphi \in \Phi}
\;
\sup_{x \in \mathbb{D}^N}
I(\varphi(x))
\in \mathbb{R},
\end{equation*}
and that $I$ satisfies the condition $(\mathrm{CPS})_c$.
Then there exists $u \in \mathcal{D}(I)$ such that
\begin{equation*}
I(u) = c
\qquad \text{and} \qquad
0 \in \partial^- I(u).
\end{equation*}
Moreover, if $\inf I(A) = c$, then $u \in A \cap \mathcal{D}(I)$.
\end{theorem}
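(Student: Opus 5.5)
This is the nonsmooth linking theorem of \cite[Appendix A]{hirano_fully_singular}. I would prove it by a deformation argument adapted to the lower semicontinuous setting, in the style of Szulkin and of the weak-slope theory of Corvellec--Degiovanni--Marzocchi \cite{corvellec_nonsmooth_analysis}.

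\textbf{Step 1: reduction.} First, $c \ge \inf I(A)$. Indeed, every $\varphi\in\Phi$ meets $A$, so $\sup_{\mathbb{D}^N} I\circ\varphi \ge \inf I(A)$. Hence $c\ge \sup I(\psi(\mathbb{S}^{N-1}))$. Suppose, for contradiction, that no $u\in\mathcal{D}(I)$ satisfies $I(u)=c$ and $0\in\partial^-I(u)$. Using $(\mathrm{CPS})_c$ and the lower semicontinuity of $u\mapsto \tnorm{\partial^-I(u)}$ on sublevel sets, I would derive the quantitative bound
\[
(1+\|u\|)\,\tnorm{\partial^-I(u)} \ge \sigma>0 \quad\text{whenever } |I(u)-c|\le\varepsilon_0 .
\]
This needs the lower semicontinuity of the subdifferential norm along sequences with $I(u_k)\to I(u)$, which holds because $\partial^-I=\partial I_0+\nabla I_1$ with $I_0$ convex.

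\textbf{Step 2: deformation.} Next I would construct, for small $\varepsilon<\varepsilon_0$, a continuous deformation
\[
\eta:[0,1]\times (I^{c+\varepsilon}\cap \mathcal{D}(I)) \to \mathcal{D}(I)
\]
with the following properties: $I(\eta(t,u))$ is nonincreasing in $t$; $\eta(1,\cdot)$ maps $I^{c+\varepsilon}$ into $I^{c-\varepsilon}$; and $\eta(t,u)=u$ whenever $I(u)\le c-2\varepsilon$.

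The splitting $I=I_0+I_1$ is what makes this construction possible. At each point $u$, pick the minimal-norm element $\alpha_u\in\partial I_0(u)+\nabla I_1(u)$. Then, using the convexity of $I_0$, the point obtained by a small proximal step,
\[
v=\mathrm{prox}_{\tau I_0}\bigl(u-\tau\nabla I_1(u)\bigr),
\]
decreases $I$ by at least $c'\tau\,\|\alpha_u\|^2$. Proximal maps are continuous (nonexpansive), and $I_1\in C^1$, so these local deformations are continuous. They can then be patched together with a partition of unity on the metric space $\mathcal{D}(I)$, equipped with the graph metric $\|u-v\|+|I(u)-I(v)|$. The Cerami weight $(1+\|u\|)$ is handled by taking step sizes proportional to $(1+\|u\|)$. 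This gives a finite displacement on bounded sets while the energy decrease stays uniform.

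\textbf{Step 3: the pseudo-gradient flow on sublevel sets (main obstacle).} This is the step I expect to be hardest: building the deformation rigorously when $I$ is only lower semicontinuous, with possibly infinite values. Continuity must hold in the graph topology, and the flow must never leave $\mathcal{D}(I)$. My plan is to rely on the weak-slope machinery, which yields precisely the quantitative deformation lemma above. Concretely, I would check that the weak slope satisfies $|dI|(u)\ge \tnorm{\partial^-I(u)}$ for $I=I_0+I_1$, and then invoke the abstract deformation theorem of \cite{corvellec_nonsmooth_analysis}.

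\textbf{Step 4: conclusion.} Choose $\varphi\in\Phi$ with $\sup I\circ\varphi\le c+\varepsilon$, and set $\tilde\varphi=\eta(1,\varphi)$. Consider first the case where $\sup I(\psi(\mathbb{S}^{N-1}))<c$. Taking $\varepsilon$ small, the deformation fixes the boundary, so $\tilde\varphi\in\Phi$. But $\sup I\circ\tilde\varphi\le c-\varepsilon$, contradicting the definition of $c$.

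In the equality case $\inf I(A)=c$, I would localize the deformation to a neighbourhood of $A$. If there were no critical point in $A$ at level $c$, then by the Cerami condition the quantitative bound would hold on a $\delta$-neighbourhood of $A$ at levels near $c$. Deforming only there, with a cutoff vanishing near $\psi(\mathbb{S}^{N-1})$ (possible because $A$ is relatively closed and disjoint from the compact set $\psi(\mathbb{S}^{N-1})$), pushes $\varphi(\mathbb{D}^N)$ off $A$. This contradicts the hypothesis that every member of $\Phi$ meets $A$. It yields a critical point $u$ with $I(u)=c$, and a limiting argument places $u\in A$. The same localization also covers the case $\sup I(\psi(\mathbb{S}^{N-1}))=c$.
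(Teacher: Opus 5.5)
The paper does not prove this theorem; it quotes it from Appendix~A of \cite{hirano_fully_singular}, so there is no in-paper argument to compare with, and your outline must stand on its own. Its overall architecture is the standard one: a slope bound from $(\mathrm{CPS})_c$, then a deformation, then localisation near $A$, which is possible because $A$ is relatively closed and at positive distance from the compact set $\psi(\mathbb{S}^{N-1})$.

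Step~4, however, has a genuine gap. There you compose a deformation $\eta$ that is continuous only for the graph topology with maps $\varphi\in\Phi$ that are merely continuous into $\mathcal{D}(I)$ for the norm of $H$. Along such maps $I\circ\varphi$ is only lower semicontinuous and can jump upward anywhere. For example, take $H=L^2(0,1)$ and $I_0(u)=\int_0^1|u'|^2\,dx$ (and $+\infty$ off $H^1(0,1)$). The path $s\mapsto s\sin(x/s)$, equal to $0$ at $s=0$, is norm continuous, but $I_0$ jumps from $0$ to about $1/2$ at $s=0$. Hence $\eta(1,\varphi(\cdot))$ need not be continuous, and the claim $\tilde\varphi\in\Phi$ is unjustified.

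Lifting $\varphi$ to the epigraph, as the weak-slope theory of \cite{corvellec_nonsmooth_analysis} does for lower semicontinuous functionals, does not help. Suppose $I\circ\varphi$ comes close to $c$ near some point of $\mathbb{S}^{N-1}$. Then no continuous majorant of $I\circ\varphi$ is $\le c-2\varepsilon$ there, so the deformation is no longer guaranteed to fix the boundary. Reading $C(\mathbb{D}^N,\mathcal{D}(I))$ with the graph metric would remove the problem. But then $\Phi$, $c$ and the linking hypothesis refer to a smaller class than the one in the statement, where $\mathcal{D}(I)$ carries the topology of $H$.

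The missing idea is to use the convexity of $I_0$ on the maps themselves, not only in the local steps. Set $J_\tau=(\mathrm{Id}+\tau\,\partial I_0)^{-1}$. Then $I_0(J_\tau w)\le I_0(w)$. The map $w\mapsto I_0(J_\tau w)$ is norm continuous on $H$, since it equals the Moreau envelope minus $\|J_\tau w-w\|^2/(2\tau)$. Finally, $J_\tau\to\mathrm{Id}$ uniformly on the compact set $\varphi(\mathbb{D}^N)$.

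So place $J_\tau\circ\varphi$ on a smaller ball and join it to $\psi$ through the collar of segments $[J_\tau\psi(x),\psi(x)]$. The result lies in $\Phi$. On the inner ball it is graph continuous, with $\sup I\le\sup I\circ\varphi+o(1)$. On the collar, convexity gives $I\le\sup I(\psi(\mathbb{S}^{N-1}))+o(1)$, and the collar stays within $o(1)$ of $\psi(\mathbb{S}^{N-1})$. Hence both the global and the localised deformation fix the collar pointwise, and the composition is continuous.

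Alternatively, follow Szulkin's approach. Apply Ekeland's principle on the complete space $\{\varphi\in\Phi:\sup I\circ\varphi\le c+1\}$ with the uniform metric. Then deform by convex combinations $(1-h\theta(x))\varphi(x)+h\theta(x)v(x)$; convexity of $I_0$ controls their energy pointwise, with no continuity of $I\circ\varphi$ needed.

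Two smaller points. In Step~1, lower semicontinuity only gives $I(u)\le c$ for the $(\mathrm{CPS})$ limit. You get $I(u)=c$ and $0\in\partial^-I(u)$ by testing $\alpha_k-\nabla I_1(u_k)\in\partial I_0(u_k)$ against $v=u$ and against arbitrary $v$. In the localised argument, your displacement is proportional to $1+\|u\|$, and $\varphi(\mathbb{D}^N)$ is not bounded uniformly in $\varepsilon$. So the neighbourhood of $A$ should carry the same weight, e.g.\ $\operatorname{dist}(u,A)<\delta(1+\|u\|)$.
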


\subsection{Maximum Principles}

We prove the Strong maximum principle for the Grushin operator. But before that we require the following logarithmic lemma.

\begin{lemma}[Logarithmic Lemma]\label{lem:logarithmic_lemma}
Let $u \in \spacegr$ be a nonnegative supersolution of $- \Gr u + h u $ in the sense of distributions where $h \in L_{loc}^{\frac{2Q}{Q+2}}(\om)$ and $d > 0$ be a positive number. Then the following holds
\begin{equation*}
\int_{B_r} |\gradgr \log(u+d)|^2 dz \leq C r^{N-2} + \int_{B_{3r/2}} \frac{h u}{u + d} dz.   
\end{equation*}
If $u \in L_{loc}^{\infty}(\om)$ then it is enough to take $h \in L_{loc}^1(\om)$. 
\end{lemma}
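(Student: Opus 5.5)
We test the supersolution inequality with the function
\[
\varphi = \frac{\eta^2}{u+d},
\]
where $\eta \in C_c^\infty(B_{3r/2})$ is a cutoff with $0\le \eta \le 1$, $\eta \equiv 1$ on $B_r$, and $|\gradgr \eta| \le C/r$. Here $B_r$ and $B_{3r/2}$ are $d$-balls compactly contained in $\Omega$. Because $u \ge 0$ and $d>0$, the map $t\mapsto 1/(t+d)$ is Lipschitz and bounded on $[0,\infty)$, so $\varphi \in \spacegr$ is nonnegative and has compact support. It is therefore an admissible test function once we pass from distributions to $\spacegr$ by density. The supersolution inequality $-\Gr u + hu \ge 0$ then gives
\[
\int_\Omega \gradgr u\cdot \gradgr \varphi \, dz + \int_\Omega h u \varphi\, dz \ge 0 .
\]
Note that $0\le u\varphi \le \eta^2$, since $u/(u+d)\le 1$.

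Next we compute the gradient by the chain rule for the vector fields $X_j$ (they are first-order operators, so the chain rule holds in $H^1_\gamma$):
\[
\gradgr \varphi = \frac{2\eta \gradgr\eta}{u+d} - \eta^2 \frac{\gradgr u}{(u+d)^2}.
\]
Writing $w=\log(u+d)$, so that $\gradgr w = \gradgr u/(u+d)$, the inequality becomes
\[
\int \eta^2 |\gradgr w|^2 \, dz \le 2\int \eta\, \gradgr \eta\cdot \gradgr w \, dz + \int \eta^2\frac{hu}{u+d}\, dz .
\]
By Young's inequality the first term on the right is at most $\tfrac12 \int \eta^2|\gradgr w|^2 + 2\int |\gradgr \eta|^2$. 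Absorbing the half and using $\eta\equiv 1$ on $B_r$ yields
\[
\int_{B_r}|\gradgr w|^2 \, dz\le 4\int_{B_{3r/2}} |\gradgr\eta|^2 \, dz + 2\int_{B_{3r/2}} \eta^2 \frac{hu}{u+d}\, dz .
\]
This is the claimed estimate, up to the harmless constant factor in front of the $h$-term; that factor can be absorbed into $C$, or adjusted by the sign convention on $h$ if one wants exactly the stated form.

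The cutoff term is bounded by $C r^{-2}|B_{3r/2}|$. For $d$-balls the volume scales like $r^{Q}$, which gives $Cr^{Q-2}$. For small $r$ one can also bound it by $Cr^{N-2}$, the Euclidean rate the statement records: near points away from $\Sigma$ the Euclidean and $d$-balls are comparable, while near $\Sigma$ we have $Q > N$, so $r^{Q-2}\le r^{N-2}$ for $r\le1$. In either case we get $\le Cr^{N-2}$ with $C$ depending only on $\gamma,n,m$. The finiteness of $\int h u\varphi$ uses $h\in L^{2Q/(Q+2)}_{loc}$ together with $u\varphi \le \eta^2$. More carefully, to justify the weak formulation we need $hu\varphi$ to be integrable and $h\varphi \in H^{-1}$. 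This follows from $h \in L^{(2_\gamma^*)'}$ and $\varphi \in L^{2_\gamma^*}$ via \eqref{ineq:Sobolev_embedding_grushin}. If instead $u\in L^\infty_{loc}$, then $\varphi$ is bounded, $\eta^2/(u+d)\le 1/d$, and $h\in L^1_{loc}$ suffices.

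The main obstacle I expect is the justification step. We must show that a distributional supersolution tested against $C_c^\infty$ functions extends to the nonsmooth test function $\varphi\in\spacegr\cap L^\infty$ with compact support. I would do this by mollifying $\varphi$, or by approximating it with nonnegative $C_c^\infty$ functions converging in $\spacegr$ and boundedly a.e. The gradient term passes to the limit by weak convergence, and the $h$-term by dominated convergence (or by the duality $L^{2Q/(Q+2)}$–$L^{2_\gamma^*}$). Nonnegativity of the approximants must be preserved so that the inequality survives; this is where the order structure of $\spacegr$ is needed.
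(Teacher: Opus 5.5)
Your proposal is correct and is essentially the paper's proof: test with $\eta^2/(u+d)$, expand the gradient of the test function, and absorb the cross term by Young's inequality. The paper carries out your density step concretely: it tests with the smooth functions $\phi^2/(u_k+d)$, where $u_k\in C_c^\infty(\Omega)$ are nonnegative with $u_k\to u$ in $\spacegr$, and passes to the limit via Vitali's theorem and dominated convergence. One remark needs correcting: the factor $2$ in front of the $h$-term cannot be ``absorbed into $C$'', since $Cr^{N-2}$ does not involve $h$. The paper's own computation (which ends with $-\tfrac12\int|\gradgr\log(u+d)|^2\phi^2$) produces the same factor and drops it silently, along with the weight $\phi^2$ (legitimate only if $hu\ge 0$). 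So what is really proved is the estimate with a constant in front of the $h$-integral, which is all the strong maximum principle uses.
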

\begin{proof}
Let $\phi \in C_c^\infty(\Omega)$ be such that
\begin{equation*}
\phi \equiv 1 \quad \text{in } B_r, 
\qquad 
|\gradgr \phi| \le \frac{C}{r},
\qquad 
\operatorname{supp}(\phi) \subset B_{3r/2},
\end{equation*}
where $r>0$ is chosen such that $B_{3r/2} \subset \om$. 
Since $u \ge 0$, there exists a nonnegative sequence $\{u_k\}_{k\ge1}$ such that
\begin{equation*}
u_k \in C_c^{\infty}(\om),
\qquad
\operatorname{supp}(u_k) \text{ is compact in } \Omega,
\qquad
\|u_k - u\| \to 0
\quad \text{as } k \to \infty.
\end{equation*}
Now, choosing the test function $\eta_k = (u_k + d)^{-1}\phi^2$, we obtain
\begin{equation}
\label{eq:basic-inequality}
\begin{aligned}
0 \le &
- \int_{B_{3r/2}} 
\gradgr u \cdot \gradgr u_k \,
(u_k + d)^{-2}\phi^2  + \int_{B_{3r/2}} 
\gradgr u \cdot \gradgr \phi \,
\bigl( 2\phi (u_k + d)^{-1} \bigr) \\
& + \int_{B_{3r/2}} h u (u_k +d )^{-1} \phi^2.
\end{aligned}
\end{equation}
For any measurable set $E \subset \Omega$, we estimate
\begin{equation*}
\begin{aligned}
\left|
\int_E 
\gradgr u \cdot \gradgr u_k \,
\phi^2 (u_k + d)^{-2}
\right|
& \le \frac{1}{d^2}
\int_E 
|\gradgr u|\,|\gradgr u_k|
\\
& \le \frac{1}{d^2}
\left( \int_E |\gradgr u|^2 \right)^{1/2}
\left( \int_E |\gradgr u_k|^2 \right)^{1/2}
\\
& \le \frac{1}{d^2} M
\left( \int_E |\gradgr u|^2 \right)^{1/2}.
\end{aligned}
\end{equation*}
Therefore, by the Vitali's Convergence Theorem, we conclude that
\begin{equation}
\label{eq:vitali-limit}
\int_{\Omega}
\gradgr u \cdot \gradgr u_k \,
\phi^2 (u_k + d)^{-2} \, dz
\to
\int_{\Omega}
|\gradgr u|^2
\phi^2 (u + d)^{-2} \, dz,
\quad
\text{as } k \to \infty.
\end{equation}
Using \eqref{eq:vitali-limit} and the Dominated Convergence Theorem in \eqref{eq:basic-inequality} together with Young's inequality, we obtain
\begin{equation*}
\begin{aligned}
0 \le {} &
- \int_{B_{3r/2}}
\left| \gradgr \log (u + d) \right|^2
\phi^2
+ 2
\int_{B_{3r/2}}
\gradgr u \cdot \gradgr \phi \,
( \phi (u + d)^{-1}) + \int_{B_{3r/2}} h u (u +d )^{-1} \phi^2
\\
\le {} &
- \frac{1}{2}
\int_{B_{3r/2}}
\left| \gradgr \log (u + d) \right|^2
\phi^2
+ C
\int_{B_{3r/2}}
|\gradgr \phi|^2 
+ \int_{B_{3r/2}} h u (u +d )^{-1}.
\end{aligned}
\end{equation*}

This completes the proof.
\end{proof}

\begin{lemma}[Strong Maximum Principle]\label{lem:SMP_grushin}
Let $u$ be a non-negative supersolution of $- \Gr u + h u  $ where $h \in L_{loc}^{\frac{2Q}{Q+2}}(\om)$. Then either $u > 0$ in $\om$ or $u \equiv 0$ in $\om$.  
\end{lemma}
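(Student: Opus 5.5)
The plan is to argue by contradiction via the Logarithmic Lemma (Lemma \ref{lem:logarithmic_lemma}), in the spirit of the classical proof of the strong maximum principle for $p$-Laplacian type operators. Assume $u\not\equiv 0$ but the zero set $Z=\{u=0\}$ has positive measure in some ball. Since $\om$ is connected, it is enough to prove the following local statement. If $B_{2r}\Subset\om$ is a $d$-ball and $u=0$ on a subset of $B_r$ of positive measure, then $u\equiv 0$ a.e. in $B_r$. Covering $\om$ by a chain of overlapping balls then propagates vanishing from one ball to the next, which gives the dichotomy. Here ``$u>0$'' is understood in the a.e. (or quasi-everywhere) sense, as is standard for $H^1_{0,\gamma}$ functions.

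For the local statement, fix $d>0$ and set $w_d=\log\bigl(1+u/d\bigr)=\log(u+d)-\log d$. Lemma \ref{lem:logarithmic_lemma} gives
\[
\int_{B_r}|\gradgr w_d|^2\,dz\le C r^{N-2}+\int_{B_{3r/2}}|h|\,dz .
\]
Indeed, $hu/(u+d)\le |h|$, and $h\in L^{2Q/(Q+2)}_{loc}\subset L^1_{loc}$. The right-hand side is therefore independent of $d$. Next, I apply a Poincaré inequality for the Grushin gradient on $d$-balls. This holds because $\Gr$ is $X$-elliptic in the sense of \cite{lanconelli_kogoj_x_elliptic_harnack}, and the Jerison-type Poincaré inequality is available for these vector fields. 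I use it in the form valid for functions vanishing on a set of positive measure:
\[
\int_{B_r}|w_d|^2\le \frac{C(r)}{|Z\cap B_r|}\int_{B_r}|\gradgr w_d|^2 .
\]
This applies since $w_d=0$ on $Z\cap B_r$. Equivalently, I can apply the standard Poincaré inequality to $w_d-\overline{w_d}_{B_r}$ and use the fact that $w_d$ vanishes on a set of positive measure. Either way, $\|w_d\|_{L^2(B_r)}\le M$ with $M$ independent of $d$.

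Now let $d\to 0^+$. On the set $\{u>0\}\cap B_r$ we have $w_d\to+\infty$ pointwise, so Fatou's lemma forces $|\{u>0\}\cap B_r|=0$, that is, $u\equiv0$ on $B_r$. The chaining argument then finishes the proof. If $Z$ has measure zero, then $u>0$ a.e., which is the first alternative.

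The main obstacle is the Poincaré inequality on Grushin balls for functions vanishing on a set of positive measure, especially for balls meeting the degeneracy set $\Sigma$. Away from $\Sigma$ the operator is uniformly elliptic and the classical inequality applies. Near $\Sigma$ I would invoke the Poincaré inequality for Hörmander / $X$-elliptic vector fields with doubling $d$-balls, where $|B_d(z,r)|\sim r^{Q}$ at $\Sigma$. A secondary technical point is that the exponent $r^{N-2}$ in the lemma should really be the $X$-metric one. This is harmless here because only the $d$-independence of the bound matters.
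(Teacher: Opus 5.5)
Your proposal is correct and follows essentially the same route as the paper. Both apply the logarithmic lemma to $\log(1+u/d)$, use a Poincaré inequality that exploits the vanishing of this function on the zero set $Z$ of positive measure, let $d\to 0$, and propagate by a chain of overlapping balls. Your version is slightly more careful than the written proof: you make explicit the $d$-independent bound $\int_{B_{3r/2}}|h|\,dz$, the Fatou step, the correct local dichotomy ($|Z\cap B_r|=0$ or $u\equiv 0$ on $B_r$), and the need for a Poincaré inequality adapted to $\gradgr$ on $d$-balls.
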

\begin{proof}
If $u \equiv 0$, there is nothing to prove. Let $u \not \equiv 0$. Then for any ball $B_r(z_0) \subset B_{3r/2}(z_0) \subset \om $, we define the set $Z : = \{ z \in B_r(z_0): u(z) = 0 \}$. Using the Poincar\'{e} inequality in \cite[Theorem 13.27]{giovanni_fractional} and Lemma \ref{lem:logarithmic_lemma}, we have
\begin{equation*}
\begin{aligned}
 \int_{ B_{r}\left(z_0\right)}\left|\log \left(1+\frac{u}{d}\right)\right|^2 d z & =\int_{ B_{r}\left(z_0\right)}\left|\log \left(1+\frac{u}{d}\right)-m_Z\right|^2 d z \\
 & \leq C \int_{ B_{r}\left(z_0\right)}\left|\gradgr \log \left(1+\frac{u}{d}\right)\right|^2 d z \\
 & \leq C r^{N-2} +\int_{B_{3r/2}(z_0)} h u (u +d )^{-1} dz
\end{aligned}
\end{equation*}
where $m_Z$ is the mean of $v:=\log (1+u / d) \in \spacegr$ on the set $Z$, that is, $m_Z:=\frac{1}{|Z|} \int_Z \log \left(1+\frac{u}{d}\right) d z=0.$ Let $d \to 0$, we have a contradiction unless $|Z| = 0$. Thus $u > 0$ in $B_r(z_0)$ for any $z_0 \in \om$. 
Now, since for any connected compact set $K \subset \om$, we can cover it with a finite number of balls $B_{r / 2}\left(z_1\right), \ldots, B_{r / 2}\left(z_k\right)$ such that $z_i \in K$, $B_r(z_i) \subset \om$ and
\begin{equation*}
\left|B_{\frac{r}{2}}\left(z_i\right) \cap B_{\frac{r}{2}}\left(z_{i+1}\right)\right|>0, \quad i=1, \ldots, k-1 .
\end{equation*}
We must have $u > 0$ in $K$. But $K$ is arbitrary, thus $u > 0$ in $\om$.
\end{proof}

\begin{remark}
\begin{enumerate}
    \item[$(i)$] If $u \in L_{loc}^{\infty}(\om)$, we can take $h \in L_{loc}^{\infty}(\om)$.
    \item[$(ii)$] If $h u \in L_{loc}^1(\om)$ is nonnegative, then the same conclusion holds in the above lemma. 
\end{enumerate}
\end{remark}

\begin{lemma}\label{lem:uniform_estimate_grushin}
Let $\om \subset \rnn$ be a bounded domain. If $u \in \spacegr$ satisfies $-\Gr u \leq v$ in the sense of distributions where $v \in L^{\alpha}(\om)$ with $\alpha > Q/2$. Then $u \in L^{\infty}(\om)$.
\end{lemma}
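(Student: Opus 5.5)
We will prove Lemma \ref{lem:uniform_estimate_grushin} by Stampacchia's truncation method, using only the Sobolev inequality \eqref{ineq:Sobolev_embedding_grushin} with exponent $2_\gamma^*=\frac{2Q}{Q-2}$. It suffices to bound $u^+$ from above. Applying the same argument to $-u$ would need $-\Gr(-u)\le$ something, which is not given, so we bound only the positive part. This matches how the lemma is used for nonnegative solutions. If a two-sided bound is needed, one would additionally assume $-\Gr u \ge -v$ and run the same argument.

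Fix $k\ge 0$ and set $A_k:=\{z\in\om: u(z)>k\}$. The test function $(u-k)^+$ belongs to $\spacegr$, since truncation is Lipschitz and commutes with the vector fields $X_j$, so that $\gradgr (u-k)^+=\chi_{A_k}\gradgr u$. It is nonnegative, so by density it may be used in the distributional inequality $-\Gr u\le v$ (density of nonnegative $C_c^\infty$ functions among nonnegative elements of $\spacegr$, as in the proof of Lemma \ref{lem:logarithmic_lemma}). This gives
\begin{equation*}
\int_{\om}|\gradgr (u-k)^+|^2\,dz\le \int_{A_k} v\,(u-k)^+\,dz .
\end{equation*}
Write $\|\cdot\|_q$ for the $L^q(\om)$ norm. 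We bound the left side from below by $S_\gamma\|(u-k)^+\|_{2_\gamma^*}^2$. On the right side we apply H\"older's inequality with exponents $2_\gamma^*$, $\alpha$ and a third exponent $r$ chosen so that $\frac1{2_\gamma^*}+\frac1\alpha+\frac1r=1$. This yields
\begin{equation*}
\|(u-k)^+\|_{2_\gamma^*}\le C\|v\|_\alpha |A_k|^{1-\frac1{2_\gamma^*}-\frac1\alpha}.
\end{equation*}

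Next, for $h>k$ we have $(h-k)|A_h|^{1/2_\gamma^*}\le \|(u-k)^+\|_{2_\gamma^*}$. Hence
\begin{equation*}
|A_h|\le \frac{C\|v\|_\alpha^{2_\gamma^*}}{(h-k)^{2_\gamma^*}}\,|A_k|^{\beta},\qquad \beta:=2_\gamma^*\Bigl(1-\frac1{2_\gamma^*}-\frac1\alpha\Bigr).
\end{equation*}
The key point is that $\beta>1$. This is equivalent to $1-\frac{2}{2_\gamma^*}>\frac1\alpha$, that is, $\frac{2}{Q}>\frac1\alpha$, which is exactly the hypothesis $\alpha>Q/2$. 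Stampacchia's iteration lemma, applied to the nonincreasing function $\phi(k)=|A_k|$, then gives $\phi(k_0+d_0)=0$ with $d_0^{2_\gamma^*}=C\|v\|_\alpha^{2_\gamma^*}|A_{0}|^{\beta-1}2^{2_\gamma^*\beta/(\beta-1)}$. Since $|A_0|\le|\om|<\infty$, we get $u\le C\|v\|_\alpha|\om|^{(\beta-1)/2_\gamma^*}$ a.e.

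The main point to check is not the iteration, which is purely measure-theoretic, but the justification of the test function. One must verify that $(u-k)^+\in\spacegr$ with gradient $\chi_{A_k}\gradgr u$ in the Grushin setting. One must also verify that the pairing $\int v(u-k)^+$ is well defined, and that the distributional inequality extends from $C_c^\infty$ to nonnegative $\spacegr$ functions. For the first, approximate by smooth $u_j\to u$ and use that $t\mapsto (t-k)^+$ is Lipschitz, together with the chain rule for the vector fields $X_i=\partial_{x_i}$ and $X_{n+j}=|x|^\gamma\partial_{y_j}$. Note that $|x|^\gamma$ is merely a coefficient, so the chain rule holds as for weighted Sobolev functions. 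For the second, observe that $v\in L^\alpha$ with $\alpha>Q/2\ge \frac{2Q}{Q+2}=(2_\gamma^*)'$, so $v$ lies in the dual of $\spacegr$ and the pairing extends by continuity.
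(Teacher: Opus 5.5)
Your proposal is correct and follows the same route as the paper. The paper tests the inequality with $(u_k-t)^+$ for smooth approximants $u_k\to u$, passes to the limit to get $\int_\Omega \gradgr u\cdot\gradgr (u-t)^+\,dz\le\int_\Omega v\,(u-t)^+\,dz$, and then defers to Stampacchia's iteration as in Theorem 6.6 of Boccardo's book; that is exactly the Sobolev--H\"older--iteration argument you write out (with $\beta>1$ if and only if $\alpha>Q/2$). Your caveat that only the upper bound $u\le C\|v\|_\alpha|\Omega|^{(\beta-1)/2_\gamma^*}$ follows is well taken and applies equally to the paper's one-sided argument; this suffices because the lemma is only applied to nonnegative functions ($u_1$ and nonnegative weak solutions of \eqref{eq:shifted_singular_problem}).
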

\begin{proof}
Let $G_t(\beta) = (\beta - t)^+$ and $u_k$ be a sequence such that $u_k \in C_c^{\infty}(\om)$ and $\lv u_k -u\rv \to 0$ as $k \to \infty$. Then we have 
\begin{equation*}
\int_{\om} \gradgr u \cdot \gradgr G_t(u_k) \, dz \leq \int_{\om} u G_t(u_k) \, dz.
\end{equation*}
Passing to the limit we have 
\begin{equation*}
\int_{\om} \gradgr u \cdot \gradgr G_t(u) \, dz \leq \int_{\om} u G_t(u) \, dz.
\end{equation*}
Now the proof follows with a slight adaptation of the proof of Theorem $6.6$ in \cite{boccardo_book}.
\end{proof}

It is easy to prove the lemma by following along the same lines of Lemma $A.1$ of \cite{hirano_concave_convex}.
\begin{lemma}\label{lem:density_grushin_space}
For every non-negative $u \in \spacegr$ there exists an increasing sequence $\{ u_k \} \subset \spacegr \cap L^{\infty}(\om)$ such that each function $u_k$ has a compact support and converges strongly to $u$.
\end{lemma}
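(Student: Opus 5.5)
We will follow the strategy of Lemma A.1 of \cite{hirano_concave_convex}, adapted to the Grushin setting. The construction is in two steps. First we truncate $u$ from above so that it becomes bounded. Then we cut it off near $\partial\Omega$ by subtracting a small constant, so that it acquires compact support. Monotonicity is enforced by choosing both parameters monotonically.

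\textbf{Step 1: an approximating sequence from $C_c^\infty(\Omega)$.} Fix a nonnegative $u\in\spacegr$. By definition of $\spacegr$, there is a sequence $\varphi_j\in C_c^\infty(\Omega)$ with $\|\varphi_j-u\|\to 0$. Passing to a subsequence, we may assume
\[
\|\varphi_j-u\|\le 2^{-j}\qquad\text{and}\qquad \varphi_j\to u \text{ a.e.}
\]
Replacing $\varphi_j$ by $\varphi_j^+$ keeps us in $\spacegr$, since $|\gradgr \varphi^+|=|\gradgr\varphi|\chi_{\{\varphi>0\}}$. This is the Grushin chain rule, which holds because $\gradgr$ is built from the vector fields $X_i$ of the Introduction. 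It also does not increase the distance to $u\ge 0$. The functions $\varphi_j^+$ are Lipschitz, have compact support, and are bounded. They are not yet monotone, however.

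\textbf{Step 2: monotone modification.} Set
\[
w_k:=\min\{u,\max_{j\le k}\varphi_j^+\}.
\]
Each $w_k$ is bounded and has compact support, since it is dominated by $\max_{j\le k}\varphi_j^+$, which is bounded and compactly supported. The sequence $w_k$ is increasing in $k$. It belongs to $\spacegr$ because lattice operations preserve $\spacegr$, again by the Grushin chain rule.

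For convergence, note first that $\max_{j\le k}\varphi_j^+\ge\varphi_k^+\to u$ a.e., so $w_k\to u$ a.e. Next, the gradient bound
\[
|\gradgr w_k|\le |\gradgr u|+\sum_j|\gradgr(\varphi_j^+-u)|\chi_{\{\ldots\}}
\]
can be controlled by the summability $\sum_j\|\varphi_j-u\|<\infty$. A cleaner route is to write
\[
u-w_k=(u-\max_{j\le k}\varphi_j^+)^+\le (u-\varphi_k^+)^+,
\]
and estimate gradients pointwise. On the set $\{u>\max_{j\le k}\varphi_j^+\}$, the gradient of $u-w_k$ equals $\gradgr(u-\varphi_{j_0}^+)$ for the active index $j_0\le k$. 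Hence
\[
|\gradgr(u-w_k)|\le \sup_{j\le k}|\gradgr(u-\varphi_j^+)|\,\chi_{\{u>\varphi_j^+\}}.
\]
This quantity is not small uniformly in $k$.

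\textbf{Main obstacle.} The real difficulty is therefore proving norm convergence of this monotone construction. The fix I would try first is to use a telescoping choice instead. Define
\[
u_k:=\min\{u,(\varphi_k-\varepsilon_k)^+ + u_{k-1}\}.
\]
Alternatively, and more robustly, use the truncation-plus-cutoff approach of Hirano et al. Take
\[
u_k:=\min\{u,k\}\cdot\min\{1,\,k\,\mathrm{dist}_d(z,\partial\Omega)\}^+,
\]
suitably modified so that it is compactly supported, with a cutoff $\eta_k$ that increases in $k$. This is increasing because both factors increase in $k$.

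Convergence is then reduced to a Hardy-type estimate
\[
\int_\Omega u^2|\gradgr\eta_k|^2\to0.
\]
This estimate is delicate near $\partial\Omega$. To avoid needing Hardy for the degenerate operator, I would instead replace the geometric cutoff by the functional one,
\[
u_k:=\bigl(\min\{u,\,k\,\varphi\}-\tfrac1k\bigr)^+,
\]
where $\varphi\in C_c^\infty$ has been chosen close to $u$. The term $1/k$ ensures compact support in the truncated support of the approximant. The norm convergence would then follow by dominated convergence, since $\gradgr u_k=\gradgr u\,\chi_{E_k}$ on increasing sets $E_k$ exhausting $\{u>0\}$ off an $\spacegr$-small error. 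Checking this last step is where I expect the work to lie.
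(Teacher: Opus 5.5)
The proposal never proves the one nontrivial point of the lemma, namely $\|u_k-u\|\to0$ in $\spacegr$, and the constructions you fall back on do not deliver it.

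\textbf{The fallback constructions.}
\begin{itemize}
\item Your last candidate, $u_k=(\min\{u,k\varphi\}-\tfrac1k)^+$ with a fixed $\varphi\in C_c^\infty(\Omega)$, fails outright. Since $u\ge0$, $u_k\to u\,\chi_{\{\varphi>0\}}$ pointwise. This is not $u$ as soon as $u>0$ on a set of positive measure outside the compact set $\operatorname{supp}\varphi$, for instance if $u>0$ in $\Omega$. Letting $\varphi$ depend on $k$ destroys the monotonicity you were trying to secure.
\item The geometric cutoff $\min\{u,k\}\eta_k$ requires $\int_\Omega u^2|\gradgr\eta_k|^2\,dz\to0$. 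That is a Hardy inequality for $\gradgr$ up to $\partial\Omega$, which is not available for the degenerate operator on a general bounded domain, and is not needed.
\item The telescoping variant is not analyzed at all.
\end{itemize}

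\textbf{A slip in Step 1.} Replacing $\varphi_j$ by $\varphi_j^+$ can increase the $\spacegr$-distance to $u$; only the pointwise distance does not grow. On $\{\varphi_j<0\}$ one has $\gradgr(u-\varphi_j^+)=\gradgr u$, not $\gradgr(u-\varphi_j)$. What you need is continuity of $v\mapsto v^+$ on $\spacegr$. That gives $\varphi_j^+\to u^+=u$, and then you may pass to a subsequence with $\|u-\varphi_j^+\|\le 2^{-j}$ and $\varphi_j^+\to u$ a.e.

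\textbf{Step 2 actually works.} The construction $w_k=\min\{u,\max_{j\le k}\varphi_j^+\}$, which you abandoned, does converge; your pointwise bound by $\sup_{j\le k}$ is just too crude.

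Set $f_j=(u-\varphi_j^+)^+$. Then $\|f_j\|\le\|u-\varphi_j^+\|\le 2^{-j}$, $f_j\to0$ a.e., and $u-w_k=\min_{j\le k}f_j=:g_k$.

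By the lattice calculus, a.e.\ one has $\gradgr g_k=\gradgr f_j$ on $\{f_j=g_k\}$ and $\gradgr g_k=0$ on $\{g_k=0\}$. Hence, for every $J$ and every $k\ge J$,
\[
\int_\Omega|\gradgr g_k|^2\,dz\le\sum_{j\le J}\int_{E_{j,k}}|\gradgr f_j|^2\,dz+\sum_{j>J}4^{-j},
\qquad E_{j,k}:=\{0<f_j\le f_i \text{ for all } i\le k\}.
\]
For fixed $j$, the sets $E_{j,k}$ decrease in $k$ to $\{0<f_j\le\inf_i f_i\}$. This set is null because $f_i\to0$ a.e. So dominated convergence sends the first sum to $0$ as $k\to\infty$, and then letting $J\to\infty$ gives $w_k\to u$ in $\spacegr$.

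Monotonicity, the bound $0\le w_k\le\max_{j\le k}\|\varphi_j\|_\infty$, and $\operatorname{supp}w_k\subset\bigcup_{j\le k}\operatorname{supp}\varphi_j$ hold as you said.

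This shrinking-active-set argument is the missing idea. The paper itself only points to Lemma A.1 of \cite{hirano_concave_convex}, so this is exactly the step a written proof has to supply.
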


We end this section by stating an important inequality.

\begin{proposition}[\cite{shafrir2000asymptotic}] \label{ineq:imp_ineq}
    If $p \geq 2$, then $$| a - b|^p -|a|^p - p |a|^{p-2} a.b \leq \frac{p(p-1)}{2}(|a| + |b|)^{p-2}|b|^2, \mbox{ for all } a,b \in \rnn.$$
    If $p \in (1,2)$, then there exists a constant $C_p$, such that
    $$ | a - b|^p -|a|^p - p |a|^{p-2} a.b \leq C_p |b|^p.$$
\end{proposition}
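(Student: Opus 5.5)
This inequality does not depend on the Grushin structure; it is a pointwise inequality in $\rnn$, so I would prove it from the one-variable Taylor formula. Fix $a,b\in\rnn$ and set $f(t)=|a-tb|^p$ for $t\in[0,1]$. Since $p>1$, the map $\xi\mapsto|\xi|^p$ is $C^1$ with gradient $p|\xi|^{p-2}\xi$, so $f(1)-f(0)=\int_0^1 f'(t)\,dt$ with $f'(t)=-p|a-tb|^{p-2}(a-tb)\cdot b$. Hence
\[
|a-b|^p-|a|^p+p|a|^{p-2}a\cdot b=-p\int_0^1\bigl(|a-tb|^{p-2}(a-tb)-|a|^{p-2}a\bigr)\cdot b\,dt .
\]
This is the expression I will bound, i.e.\ the natural Taylor remainder, which carries a \emph{plus} sign in front of $p|a|^{p-2}a\cdot b$. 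With the minus sign exactly as printed, the inequality is false: for $p=2$, $a=b=e_1$ the left side is $0-1-2=-3\le 4$, but for $a=e_1$, $b=-e_1$ it is $4-1+2=5>4$. So I would read the statement with the sign corrected (equivalently, replace $b$ by $-b$ in the last term), and the argument below proves that version.

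\textbf{Case $p\ge2$.} Here $\xi\mapsto|\xi|^p$ is $C^2$ away from the origin, and its Hessian $p|\xi|^{p-2}\bigl(I+(p-2)\hat\xi\otimes\hat\xi\bigr)$, with $\hat\xi=\xi/|\xi|$, has operator norm $p(p-1)|\xi|^{p-2}$. I would use the second-order Taylor formula with integral remainder along the segment:
\[
|a-b|^p-|a|^p+p|a|^{p-2}a\cdot b=\int_0^1(1-t)\,D^2(|\cdot|^p)(a-tb)[b,b]\,dt .
\]
Since $|a-tb|\le|a|+|b|$, the Hessian term is at most $p(p-1)(|a|+|b|)^{p-2}|b|^2$, and $\int_0^1(1-t)\,dt=\tfrac12$. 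This gives the stated constant $\tfrac{p(p-1)}{2}$. One technical point remains: if the segment passes through $0$, the Hessian is still locally bounded (when $p=2$) or continuous and vanishing at $0$ (when $p>2$). So either the formula holds directly, or I approximate by $(|\xi|^2+\varepsilon)^{p/2}$ and let $\varepsilon\to0$.

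\textbf{Case $1<p<2$.} This is the main obstacle, because the Hessian blows up at $0$ and the bound must involve only $|b|^p$. I would use homogeneity. Both sides are $p$-homogeneous in $(a,b)$, and the inequality is trivial for $b=0$, so I may normalize $|b|=1$. Write $R(a)=|a-b|^p-|a|^p+p|a|^{p-2}a\cdot b$.
\begin{itemize}
\item If $|a|\le2$, then $R$ is continuous, hence bounded by a constant depending only on $p$.
\item If $|a|\ge2$, the segment $a-tb$ stays at distance at least $|a|-1\ge|a|/2$ from $0$. The Taylor argument of the previous case then gives $|R|\le C_p'\,|a|^{p-2}\le C_p'\,2^{p-2}$, which is bounded because $p-2<0$.
\end{itemize}
Taking $C_p$ to be the maximum of these two bounds gives $R\le C_p|b|^p$.
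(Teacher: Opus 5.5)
Your proof is correct. The paper gives no proof of this proposition; it only cites Shafrir. So there is no route to compare against, and your argument supplies the missing justification.

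\textbf{The sign correction.} You are right that the printed sign is a typo and that the statement must be read as a bound on $|a-b|^p-|a|^p+p|a|^{p-2}a\cdot b$. For $p=2$ the printed left-hand side is $|b|^2-4a\cdot b$ while the right-hand side is $|b|^2$, so the printed inequality is equivalent to $a\cdot b\ge 0$. The printed version fails for $1<p<2$ too: with $a=e_1$, $b=-se_1$ its left-hand side is $2ps+O(s^2)$, which exceeds $C_p s^p$ as $s\to 0^+$. One arithmetic slip in your counterexample: the right-hand side is $\frac{p(p-1)}{2}(|a|+|b|)^{0}|b|^2=1$, not $4$. The conclusion $5>1$ still holds.

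\textbf{Case $p\ge 2$.} No approximation is needed. For every $p\ge 2$ the map $\xi\mapsto|\xi|^p$ is $C^2$ on all of $\mathbb{R}^N$: for $p>2$ the Hessian extends continuously by $0$ at the origin, and for $p=2$ it is $2I$. Your eigenvalue bound then gives exactly the constant $\frac{p(p-1)}{2}$.

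\textbf{Case $1<p<2$.} The homogeneity-plus-compactness argument is sound. You only need the convention $|a|^{p-2}a=0$ at $a=0$, which makes the remainder continuous because $p>1$. On $|a|\ge 2|b|$ the largest eigenvalue of the Hessian is $p|\xi|^{p-2}$, which gives $|R|\le \frac{p}{2}\,2^{2-p}|a|^{p-2}|b|^2\le \frac{p}{2}|b|^p$.

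If you want an explicit constant instead of a compactness one, handle $|a|\le 2|b|$ by dropping $-|a|^p$ and bounding $|a-b|^p+p|a|^{p-1}|b|\le (3^p+p\,2^{p-1})|b|^p$. Then $C_p=3^p+p\,2^{p-1}$ works for all $a,b$.
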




\section{Brezis-Nirenberg problem in a general framework} \label{sec:brezis_nirenberg_general}

In this section, we undertake a rigorous analysis of the following boundary value problem
\begin{equation}
\label{eq:main_problem}\tag{$\mathcal{P}$}
\begin{cases}
\begin{aligned}
- \Gr u + g(z,u) &= f(z,u) \quad &&\text{in } \Omega, \\
u &= 0 \quad &&\text{on } \partial\Omega. 
\end{aligned}
\end{cases}
\end{equation}

Our primary objective is to establish the existence of a solution of \eqref{eq:main_problem} under the assumption that a subsolution or supersolution to the problem is available. This result will serve as a foundational tool for addressing more specialized problems in the subsequent sections.

Before proceeding, we list the standing hypotheses. 

\medskip


\medskip

 \textbf{(A1)} Let $1 \le p \le \frac{Q+2}{Q-2}$. 
Assume that $f : \Omega \times \mathbb{R} \to \mathbb{R}$ is a Carath\'eodory function satisfying the growth condition
\begin{equation*}
|f(z,t)| \le a_1(z) + c_1 |t|^p
\quad \text{for a.e. } z \in \Omega \text{ and for all } t \in \mathbb{R},
\end{equation*}
where $a_1 \in L^{(p+1)/p}(\Omega)$ is a nonnegative function and $\frac{p+1}{p}$ is the conjugate exponent of $p+1$.

\medskip

 \textbf{(A2)} Let $g : \Omega \times \mathbb{R} \to [-\infty,\infty]$ and $G : \Omega \times \mathbb{R} \to (-\infty,\infty]$ be functions such that:

\medskip

 \textbf{(i)} For every $t \in \mathbb{R}$, the mappings $g(\cdot,t) : \Omega \to [-\infty,\infty]$ and $G(\cdot,t) : \Omega \to (-\infty,\infty]$ are measurable.  

\medskip

 \textbf{(ii)} For almost every $z \in \Omega$:
\begin{itemize}
  \item[(a)] The function $G(z,\cdot) : \mathbb{R} \to (-\infty,\infty]$ is lower semicontinuous and convex.
  \item[(b)] The function $G(z,\cdot)$ is differentiable on $\operatorname{Int}\{ v \in \mathbb{R} : G(z,v) < \infty \}$ and satisfies
  \begin{equation*}
\frac{\partial G}{\partial t}(z,t) = g(z,t)
  \quad \text{for every } t \in \operatorname{Int}\{ s \in \mathbb{R} : G(z,s) < \infty \}.
  \end{equation*}
  \item[(c)] The mapping $G(z,\cdot)$ is continuous on $\{ s \in \mathbb{R} : G(z,s) < \infty \}$.
\end{itemize}

\medskip

 \textbf{(iii)} There exist nonnegative functions $a_2\in L^{\frac{2Q}{Q+2}}(\Omega)$ and $a_3 \in L^1(\Omega)$ such that
\begin{equation*}
G(z,t) \ge - a_2(x)\,|t| - a_3(z),
\end{equation*}
for a.e. $z \in \om$ and for all $t \in \real$.

\subsection{General Weak Solution}
To define the notion of the solutions of \eqref{eq:main_problem}, we have the following definitions 

\begin{definition}\label{def:weak_sub_super_solution_grushin}
We say $\varphi$ is a \textit{subsolution} (resp. a \textit{supersolution}) of \eqref{eq:main_problem} if the following hold
\begin{enumerate}
    \item[(i)] $\varphi \in \spaceg$;
    \item[(ii)] $\varphi^+ \in \spacegr$ (resp. $\varphi^- \in \spacegr$);
    \item[(iii)] $g(\cdot, \varphi) \in L_{loc}^1(\Omega)$;
    \item[(iv)] $-\Gr \varphi + g(z, \varphi) - f(z, \varphi) \leqslant 0$ (resp. $\geqslant 0$) in the sense of distributions.
\end{enumerate}
Furthermore, we say $\varphi$ is a \textit{strict subsolution} (resp. a \textit{strict supersolution}) of \eqref{eq:main_problem} if $\varphi$ is a subsolution (resp. a supersolution) of \eqref{eq:main_problem} and
\begin{enumerate}
    \item[(v)] $g^+(\cdot, \varphi) \in L^{\frac{2Q}{Q+2}}(\Omega)$ (resp. $g^-(\cdot, \varphi) \in L^{\frac{2Q}{Q+2}}(\Omega)$);
    \item[(vi)] $\int_\Omega (\gradgr \varphi \gradgr v + g(z, \varphi)v - f(z, \varphi)v) \, dz < 0$ (resp. $> 0$) for all $v \in \spacegr \setminus \{0\}$ with $v \geqslant 0$.
\end{enumerate}
\end{definition}

The above definition leads to the definition of weak solution which is given by

\begin{definition}
We call $\varphi$ to be a \textit{weak solution} of \eqref{eq:main_problem} if $\varphi$ is both a subsolution and a supersolution of \eqref{eq:main_problem}. In particular, if $\varphi$ satisfies 
\begin{enumerate}
    \item $\varphi \in \spacegr$;
    \item $g(\cdot, \varphi) \in L_{loc}^1(\Omega)$;
    \item $-\Gr \varphi + g(z, \varphi) - f(z, \varphi) = 0$.
\end{enumerate}   
\end{definition}

Also, we require a weak sense in which we define the values of functions at the boundary.
\begin{definition}\label{def:boundary_def}
Let $u \in H^{1}_{\gamma, loc}(\Omega)$. We say that $u \le 0$ on $\partial\Omega$ if, for every $\varepsilon > 0$, the function $(u - \varepsilon)^+ \in \spacegr$.
\end{definition}

Next, we recall several properties of the functions $g$ and $G$, which are taken from Lemma $1$ and Lemma $2$ of \cite{hirano_concave_convex}.

\begin{lemma}\label{lem:property_G_1}
Assume that \textnormal{(A1)} and \textnormal{(A2)} hold. 
Let $u$, $v$, and $w$ be real-valued functions defined on $\Omega$ such that $w$ lies in the convex set $\{ \phi : \min \{u, v \} \leq \phi \leq \max \{u, v \} \text{ a.e. in } \om \}$. Then, for almost every $z \in \Omega$, the following estimate holds:
\begin{equation*}
G\bigl(z, w(z)\bigr) \le G^{+}\bigl(z, u(z)\bigr) + G^{+}\bigl(z, v(z)\bigr).
\end{equation*}
\end{lemma}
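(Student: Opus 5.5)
The claim is a pointwise statement about convex functions of one real variable, so I will argue for a fixed $z$ outside a null set. Fix $z \in \Omega$ such that (A2)(ii)(a) holds, i.e. $G(z,\cdot)$ is convex and lower semicontinuous with values in $(-\infty,\infty]$, and such that $\min\{u(z),v(z)\} \le w(z) \le \max\{u(z),v(z)\}$. Since both conditions hold almost everywhere, the set of such $z$ has full measure, and it suffices to prove the inequality at each such point. Write $a = u(z)$, $b = v(z)$, $c = w(z)$, and assume without loss of generality that $a \le b$, so that $c \in [a,b]$.

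The key step is convexity along the segment. Since $c \in [a,b]$, we can write $c = \theta a + (1-\theta) b$ for some $\theta \in [0,1]$ (any $\theta$ works if $a=b$). Convexity of $G(z,\cdot)$ on $\mathbb{R}$, with the value $+\infty$ allowed, gives
\[
G(z,c) \le \theta G(z,a) + (1-\theta) G(z,b) \le \max\{G(z,a), G(z,b)\}.
\]
This remains valid when one of the values is $+\infty$, because then the right-hand side is $+\infty$ and the claimed bound is trivial.

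To conclude, note that $\max\{s,t\} \le s^+ + t^+$ for $s,t \in (-\infty,\infty]$, since both $s^+$ and $t^+$ are nonnegative. Hence
\[
G(z,c) \le G^+(z,a) + G^+(z,b),
\]
which is exactly the estimate asserted in the lemma.

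No real obstacle is expected. The only care needed is to handle the extended value $+\infty$ and to restrict to the full-measure set where convexity holds; hypothesis (A1) plays no role in the argument.
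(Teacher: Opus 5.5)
Your argument is correct. On the full-measure set where $G(z,\cdot)$ is convex and $w(z)$ lies between $u(z)$ and $v(z)$, convexity along the segment gives $G(z,w(z))\le \max\{G(z,u(z)),G(z,v(z))\}\le G^{+}(z,u(z))+G^{+}(z,v(z))$, and the $+\infty$ case is trivial. The paper gives no proof of its own and imports the result from Lemma 1 of \cite{hirano_concave_convex}, whose intended justification is this same convexity observation; as you note, (A1) and lower semicontinuity play no role.
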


\begin{lemma}
\label{lem:measurability_properties_G_g}
Assume that \textnormal{(A1)} and \textnormal{(A2)} are satisfied. 
Suppose, in addition, that there exists a measurable function $u_0 : \Omega \to \mathbb{R}$ such that $G\bigl(z, u_0(z)\bigr) \in \mathbb{R}$ for a.e. $z \in \Omega,$
and that the mapping $z \mapsto G\bigl(z, u_0(z)\bigr)$ is measurable. 
Then the following statements hold.
\begin{itemize}
    \item For every measurable function $u : \Omega \to \mathbb{R}$ satisfying
\begin{equation}
\label{eq:G_finite_at_u}
G\bigl(z, u(z)\bigr) \in \mathbb{R}
\quad \text{for almost every } z \in \Omega,
\end{equation}
the function $z \mapsto G\bigl(z, u(z)\bigr)$ is measurable.
    \item If, in addition, the mapping $z \mapsto G\bigl(z, u_0(z)\bigr)$ is measurable, then for every measurable function $u : \Omega \to \mathbb{R}$ satisfying \eqref{eq:G_finite_at_u}, the function $z \mapsto g\bigl(z, u(z)\bigr)$ is measurable. 
    \item For every pair of measurable functions $u, v : \Omega \to \mathbb{R}$ such that $G\bigl(z, u(z)\bigr), \, G\bigl(z, v(z)\bigr) \in \mathbb{R}$ for almost every $z \in \Omega,$ the product
\begin{equation*}
z \mapsto g\bigl(z, u(z)\bigr)\,\bigl(v(z) - u(z)\bigr)
\end{equation*}
is measurable.
\end{itemize}
\end{lemma}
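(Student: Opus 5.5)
The three claims are pointwise properties in $z$, so the plan is to reduce everything to properties of the one-variable functions $t\mapsto G(z,t)$ and $t\mapsto g(z,t)$ for a.e.\ fixed $z$, and then to handle measurability through Carath\'eodory-type arguments. Throughout, fix $z$ outside the null set where (A2)(ii) fails. Convexity and lower semicontinuity make the effective domain $D_z:=\{s: G(z,s)<\infty\}$ an interval. By (A2)(ii)(c), $G(z,\cdot)$ is continuous on $D_z$, and by (b) it is differentiable on $\operatorname{Int}D_z$ with derivative $g(z,\cdot)$.

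\textbf{First claim.} Measurability of $z\mapsto G(z,u(z))$ under \eqref{eq:G_finite_at_u} is the main technical point. I would first treat simple functions $u=\sum_i t_i\chi_{E_i}$. Then $G(z,u(z))=\sum_i G(z,t_i)\chi_{E_i}(z)$, which is measurable by (A2)(i).

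For general $u$, the hypothesis gives $u(z)\in D_z$ a.e., and $u_0(z)\in D_z$ a.e. as well. I would approximate $u$ by simple functions taking values in the random segment between $u_0(z)$ and $u(z)$; this is the role of $u_0$. Concretely, I would set
\[
u_k(z)=u_0(z)+\theta_k(z)\bigl(u(z)-u_0(z)\bigr),
\]
where $\theta_k\to 1$ is chosen so that each $u_k$ is a step-like function. This is delicate because $u_0$ is not simple. The cleaner route is to write $G(z,u(z))=\lim_k G(z,u_k(z))$ using continuity of $G(z,\cdot)$ on the interval $D_z$. The approximants $u_k$ must stay inside $D_z$, which holds by convexity of $D_z$ if they lie between $u_0$ and $u$.

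To get simple approximants inside that segment, I would use dyadic truncations $u_k=\sum_j t_j^k\chi_{\{t_j^k\le u<t_{j+1}^k\}}$ on the part where $u\ge u_0$, rounded toward $u_0$ (from above or below as appropriate). This needs care at points where $u(z)$ lies on $\partial D_z$, and there one uses the one-sided continuity that (c) provides. Alternatively, I would invoke the proof of Lemma 2 of \cite{hirano_concave_convex} directly, since the statement is identical. I expect this approximation step, which keeps the approximants in the domain, to be the main obstacle.

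\textbf{Second claim.} On the set where $u(z)\in\operatorname{Int}D_z$, I would write
\[
g(z,u(z))=\lim_{h\to0^+}\frac{G(z,u(z)+h)-G(z,u(z))}{h}
\]
along $h=1/k$, either forward or backward. For large $k$ the points $u\pm 1/k$ lie in $D_z$, so each quotient is measurable by the first claim applied on suitable subsets. The subsets $\{z: G(z,u(z)\pm 1/k)<\infty\}$ are measurable by the first claim, using truncation of the values by $\min(G,n)$. On the remaining set, where $u(z)\in\partial D_z$, $g$ is determined as an extended one-sided derivative, which is again a monotone limit of measurable difference quotients. Hence it is measurable as a function with values in $[-\infty,\infty]$.

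\textbf{Third claim.} Once the second claim gives measurability of $g(z,u(z))$ as an extended-real function, the product $g(z,u(z))(v(z)-u(z))$ is measurable, with the usual convention $0\cdot\infty=0$. One can alternatively realize it as the limit
\[
\lim_{k\to\infty} k\Bigl[G\bigl(z,u+\tfrac1k(v-u)\bigr)-G(z,u)\Bigr].
\]
This limit exists in $[-\infty,\infty)$ by convexity, since the difference quotients are monotone in $k$. The intermediate points lie between $u$ and $v$, so they belong to $D_z$ by convexity of $D_z$. Each term is measurable by the first claim, and Lemma \ref{lem:property_G_1} guarantees these values stay finite above. The limit therefore equals the directional derivative, which coincides with $g(z,u)(v-u)$ wherever $g$ is defined, giving measurability.
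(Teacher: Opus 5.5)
The paper gives no proof of this lemma; it imports it from Lemmas 1--2 of \cite{hirano_concave_convex}. Your plan for the first claim is workable. The second and third claims, however, have a genuine gap at points where $u(z)$ is an endpoint of $D_z=\{s: G(z,s)<\infty\}$.

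\textbf{The boundary gap.} At such points you assert that $g(z,u(z))$ ``is determined as an extended one-sided derivative''. In the third claim you likewise assert that $\lim_k k\bigl[G(z,u+\tfrac{1}{k}(v-u))-G(z,u)\bigr]$ equals $g(z,u)(v-u)$. Nothing in (A2) gives this. Condition (ii)(b) ties $g(z,\cdot)$ to $G(z,\cdot)$ only on $\operatorname{Int} D_z$, and condition (i) only asks that $g(\cdot,t)$ be measurable for each fixed $t$.

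The step genuinely fails. Take
\begin{itemize}
\item $G(z,t)=0$ for $t\ge z_1$ and $G(z,t)=+\infty$ for $t<z_1$;
\item $g(z,t)=0$ for $t>z_1$, $g(z,t)=-\infty$ for $t<z_1$, and $g(z,z_1)=\chi_V(z)$ with $V\subset\Omega$ non-measurable.
\end{itemize}
Then (A2) holds, because each slice $\{z_1=t\}$ is Lebesgue-null. The choice $u_0=z_1+1$ satisfies every hypothesis, and even $g(\cdot,u_0)\equiv 0$. Yet for $u=z_1$ and $v=u_0$, both $g(z,u(z))$ and $g(z,u(z))(v(z)-u(z))$ equal $\chi_V$.

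So you need, and must state, an extra structural assumption: at an endpoint of a nondegenerate $D_z$, $g(z,\cdot)$ is the inward one-sided derivative of $G(z,\cdot)$. This does hold in the application of Section~\ref{sec:brezis_nirenberg_grushin}, where $g=-\infty$ at the left endpoint of the domain, which is the right derivative there.

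\textbf{The degenerate case and the misprinted hypothesis.} Even with that assumption, if $D_z=\{u(z)\}$ is a single point there is no derivative from inside. This is what the extra hypothesis in the second bullet is for. As printed, it merely repeats that $z\mapsto G(z,u_0(z))$ is measurable. Its use in the proof of Proposition~\ref{prop:sub_super_solution_criterion}(i) (``since $g(\cdot,\varphi_1)$ is measurable'') shows it should read that $z\mapsto g(z,u_0(z))$ is measurable. Your argument never uses it.

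\textbf{A cleaner route.} On $\{u\neq u_0\}$, write $g(z,u)(u_0-u)=\lim_k k\bigl[G(z,u+\tfrac{1}{k}(u_0-u))-G(z,u)\bigr]$. This is a monotone limit of finite functions that are measurable by the first claim, since the intermediate points lie between $u$ and $u_0$. On $\{u=u_0\}$, use $g(z,u)=g(z,u_0)$.

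This route also avoids your quotients built from $G(z,u\pm\tfrac{1}{k})$. Those can be $+\infty$ on sets of positive measure, so the first claim as stated does not cover them. Truncating by $\min(G,n)$ does not repair this; you would need lower semicontinuity. For the third bullet, your directional-derivative route toward $v$ is the right one once the endpoint assumption is added. Deducing it from the second claim is not acceptable, since the third bullet carries no hypothesis on $g(\cdot,u_0)$.

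\textbf{On the first claim.} Your approximants need not be simple functions inside the segment for every $k$. It suffices that, for each fixed $z$, they lie between $u_0(z)$ and $u(z)$ for all large $k$, so plain dyadic rounding toward $u_0$ works on $\{u\neq u_0\}$. On $\{u=u_0\}$ rounding has no direction and $D_z$ may be a single point, so there you must use $G(z,u)=G(z,u_0)$ directly.
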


\subsection{Variational Setup}
We now introduce a variational framework for problem~\eqref{eq:main_problem}. 
Let $F : \Omega \times \mathbb{R} \to \mathbb{R}$ be a Carath\'eodory function such that
\begin{equation*}
F(z,0) \in L^{1}(\Omega)
\quad \text{and} \quad
\partial_t F(z,t) = f(z,t)
\quad \text{for } (z,t) \in \Omega \times \mathbb{R}.
\end{equation*}

We define the functional $I : \spacegr \to (-\infty,\infty]$ by
\begin{equation*}
I(u) =
\begin{cases}
\dfrac{1}{2} \displaystyle\int_{\Omega} |\gradgr u|^{2}\,dz
+ \displaystyle\int_{\Omega} G(z,u)\,dz
- \displaystyle\int_{\Omega} F(z,u)\,dz,
& \text{if } G(\cdot,u) \in L^{1}(\Omega), \\[1.2ex]
\infty, & \text{otherwise}.
\end{cases}
\end{equation*}
This definition holds for every $u \in \spacegr$. 
For any subset $K \subset \spacegr$, we also introduce the functional
$I_K : \spacegr \to (-\infty,\infty]$ defined by
\begin{equation}
\label{eq:definition_of_IK}
I_K(u) =
\begin{cases}
I(u), & \text{if } u \in K \text{ and } G(\cdot,u) \in L^{1}(\Omega), \\
\infty, & \text{otherwise}.
\end{cases}
\end{equation}
for every $u \in \spacegr$. \\
The next lemma characterizes the elements lying inside the subdifferential of $I_K$.

\begin{lemma}
\label{lem:variational_inequality_characterization}
Assume \textnormal{(A1)} and \textnormal{(A2)}. 
Let $K$ be a convex subset of $\spacegr$. 
Let $w \in \spacegr$ and let $u \in K$ with $G(\cdot,u) \in L^{1}(\Omega)$. 
Then the following statements are equivalent:
\begin{itemize}
  \item[(i)] $w \in \partial^{-} I_K(u)$;
  \item[(ii)] for every $v \in K$ with $G(\cdot,v) \in L^{1}(\Omega)$, we have  $g(\cdot,u)(v-u) \in L^{1}(\Omega)$ and
  \begin{equation*}
  \begin{aligned}
  \int_{\Omega} \gradgr u \cdot \gradgr (v-u)\, dz
  + \int_{\Omega} g(z,u)\,(v-u)\, dz
  - \int_{\Omega} f(z,u)\,(v-u)\, dz
  \ge \langle w, v-u \rangle .
  \end{aligned}
  \end{equation*}
\end{itemize}
\end{lemma}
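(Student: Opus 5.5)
We prove the two implications separately, writing $I_K = I_0 + I_1$ with $I_1(u) = -\int_\Omega F(z,u)\,dz$, which is $C^1$ on $\spacegr$ by (A1). The remaining part $\frac12\lv u\rv^2 + \int_\Omega G(z,u)\,dz$, restricted to $K$, is convex by (A2)(ii)(a). It is lower semicontinuous by Fatou's lemma combined with the lower bound (A2)(iii). Here $a_2|u|$ is controlled via $a_2 \in L^{2Q/(Q+2)}$ and the Sobolev embedding.

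\textbf{(ii)$\Rightarrow$(i).} Fix $v\in K$ with $G(\cdot,v)\in L^1$. Convexity of $G(z,\cdot)$ gives the pointwise inequality
\[
G(z,v)-G(z,u)\ge g(z,u)(v-u).
\]
This holds where $u$ lies in the interior of the domain of $G(z,\cdot)$, and as a one-sided derivative inequality otherwise. Adding $\frac12\lv v\rv^2-\frac12\lv u\rv^2\ge \int_\Omega\gradgr u\cdot\gradgr(v-u)\,dz$ and integrating, then using (ii), we obtain
\[
I_K(v)-I_K(u)\ge \langle w,v-u\rangle+\int_\Omega\bigl(F(z,u)-F(z,v)+f(z,u)(v-u)\bigr)\,dz.
\]
The last integral is $o(\lv v-u\rv)$ because $I_1$ is $C^1$. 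If $v\notin K$ or $G(\cdot,v)\notin L^1$, then $I_K(v)=\infty$ and there is nothing to check. Hence the liminf defining $\partial^-I_K(u)$ is $\ge 0$, so $w\in\partial^- I_K(u)$.

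\textbf{(i)$\Rightarrow$(ii).} Take $v$ as in (ii) and set $u_t=u+t(v-u)$ for $t\in(0,1]$. Then $u_t\in K$ by convexity, and $G(\cdot,u_t)\in L^1$. The upper bound comes from Lemma \ref{lem:property_G_1}, giving $G(z,u_t)\le G^+(z,u)+G^+(z,v)$; the lower bound comes from (A2)(iii). Since $w\in\partial^-I_K(u)$,
\[
\liminf_{t\to0^+}\frac{I_K(u_t)-I_K(u)-t\langle w,v-u\rangle}{t}\ge 0 .
\]
The gradient term and the $F$-term have derivatives $\int_\Omega\gradgr u\cdot\gradgr(v-u)\,dz$ and $-\int_\Omega f(z,u)(v-u)\,dz$. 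For the $G$-term, convexity says the difference quotients
\[
q_t(z)=\frac{G(z,u_t)-G(z,u)}{t}
\]
are nondecreasing in $t$. They are bounded above by $q_1=G(z,v)-G(z,u)\in L^1$, and they converge pointwise as $t\downarrow0$ to $g(z,u)(v-u)$ by (A2)(ii)(b),(c). By the monotone convergence theorem applied to $q_1-q_t\ge0$,
\[
\lim_{t\to 0^+}\int_\Omega q_t\,dz=\int_\Omega g(z,u)(v-u)\,dz\in[-\infty,\infty).
\]
The $\liminf$ inequality above shows this limit is finite, so $g(\cdot,u)(v-u)^-$ is integrable. The positive part is bounded by $q_1^+$, so $g(\cdot,u)(v-u)\in L^1$. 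Measurability is provided by Lemma \ref{lem:measurability_properties_G_g}. Passing to the limit then yields the inequality in (ii).

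The main obstacle is the pointwise limit of $q_t$ at points where $u(z)$ lies on the boundary of the effective domain of $G(z,\cdot)$. There $g$ is only defined as a possibly infinite one-sided derivative, and the monotone-convergence step must be justified with care: I would interpret $g(z,u)(v-u)$ as the one-sided directional derivative there, and use continuity on the domain, (A2)(ii)(c), to identify the limit.
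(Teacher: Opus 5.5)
Your proof is correct and follows the argument the paper defers to, Lemma~3 of \cite{hirano_concave_convex}: for (ii)$\Rightarrow$(i) you use convexity of $G(z,\cdot)$ together with Fr\'echet differentiability of $u\mapsto\int_\Omega F(z,u)\,dz$, and for (i)$\Rightarrow$(ii) you use the nondecreasing difference quotients along $u+t(v-u)$ with monotone convergence, which yields both the integrability of $g(\cdot,u)(v-u)$ and the inequality. Two minor points: the lower semicontinuity of your convex part is never used (and would require $K$ closed), and your convention that $g(z,u(z))$ equals the one-sided derivative at an endpoint of the effective domain is exactly what is needed there, since (A2) does not fix $g$ at such points (the paper's application satisfies it, with $g=-\infty$ at $s=-u_0$).
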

\begin{proof}
The result can be established by an argument analogous to that used in the proof of Lemma 3 in \cite{hirano_concave_convex}.
\end{proof}

For any functions $\varphi, \psi : \Omega \to [-\infty,\infty]$, we define the order interval sets $K_{\varphi}^{\psi}$, $K_{\varphi}$, and $K^{\psi}$ by
\begin{equation*}
\begin{aligned}
&K_{\varphi}^{\psi}
= \left\{ u \in \spacegr : \varphi \le u \le \psi \ \text{a.e. in } \Omega \right\},\\
&K_{\varphi} = \left\{ u \in \spacegr : u \geq \varphi \text{ a.e. in } \om \right\}, \quad \text{ and } \quad 
K^{\psi}
= \left\{ u \in \spacegr : u \le \psi \ \text{a.e. in } \Omega \right\}.
\end{aligned}
\end{equation*}

Next we see how we a critical point gives the weak solution of the equation. The proof is essentially the same as of Proposition $3.1$ of \cite{hirano_fully_singular}, however, we provide it for completeness. 

\begin{proposition}
\label{prop:sub_super_solution_criterion}
Assume \textnormal{(A1)} and \textnormal{(A2)}. Suppose, in addition, that at least one of the following conditions is satisfied:
\begin{itemize}
  \item[(i)] $\varphi_1$ is a subsolution of~\eqref{eq:main_problem}, 
  $G(\cdot,v) \in L_{loc}^{1}(\Omega)$ for all $v \in K_{\varphi_1}$, 
  $u \in D(I_{K_{\varphi_1}})$, and $0 \in \partial^{-} I_{K_{\varphi_1}}(u)$;

  \item[(ii)] $\varphi_2$ is a supersolution of~\eqref{eq:main_problem}, 
  $G(\cdot,v) \in L_{loc}^{1}(\Omega)$ for all $v \in K^{\varphi_2}$, 
  $u \in D(I_{K^{\varphi_2}})$, and $0 \in \partial^{-} I_{K^{\varphi_2}}(u)$;

  \item[(iii)] $\varphi_1$ and $\varphi_2$ are, respectively, a subsolution and a supersolution of~\eqref{eq:main_problem}, 
  $\varphi_1 \le \varphi_2$ almost everywhere in $\Omega$, 
  $G(\cdot,\varphi_1),\, G(\cdot,\varphi_2) \in L_{loc}^{1}(\Omega)$, 
  $u \in D(I_{K_{\varphi_1}^{\varphi_2}})$, and $0 \in \partial^{-} I_{K_{\varphi_1}^{\varphi_2}}(u)$.
\end{itemize}
Then $u$ is a weak solution of~\eqref{eq:main_problem}.
\end{proposition}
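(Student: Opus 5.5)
We argue case (i) in detail; case (ii) is symmetric and case (iii) combines the two. The plan is to turn the variational inequality from Lemma \ref{lem:variational_inequality_characterization} (with $w=0$, $K=K_{\varphi_1}$) into the two distributional inequalities $-\Gr u+g(z,u)-f(z,u)\ge 0$ and $\le 0$.

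\textbf{Step 1: supersolution inequality.} Take $\psi\in C_c^\infty(\Omega)$ with $\psi\ge0$ and set $v=u+t\psi$ for $t>0$. Then $v\in K_{\varphi_1}$. Since $G(\cdot,v)\in L^1_{loc}(\Omega)$ by hypothesis, and $v=u$ outside $\operatorname{supp}\psi$, we get $G(\cdot,v)\in L^1(\Omega)$. Lemma \ref{lem:variational_inequality_characterization} then gives $g(\cdot,u)\psi\in L^1$ and
\[
\int_\Omega \gradgr u\cdot\gradgr\psi+\int_\Omega (g(z,u)-f(z,u))\psi\ge0 .
\]
Hence $u$ is a supersolution, and also $g(\cdot,u)\in L^1_{loc}(\Omega)$: choosing $\psi\equiv1$ on a compact set gives local integrability of $g^-(\cdot,u)$, and the finiteness of $G$ controls the sign issues.

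\textbf{Step 2: subsolution inequality (main obstacle).} Here the test direction must keep us above $\varphi_1$. Following Hirano et al., for $\psi\ge0$ in $C_c^\infty(\Omega)$ and $\varepsilon>0$, I would test with
\[
v=\max\{u-\varepsilon\psi,\varphi_1\}=u-\varepsilon\psi+(\varphi_1-u+\varepsilon\psi)^+ .
\]
This $v$ lies in $K_{\varphi_1}$, and $G(\cdot,v)\in L^1$ by Lemma \ref{lem:property_G_1}, since $v$ lies between $u$ and $\varphi_1$ locally and equals $u$ off the support. The variational inequality yields
\[
\varepsilon\int(\gradgr u\cdot\gradgr\psi+(g-f)(u)\psi)\le\int (\gradgr u\cdot\gradgr w_\varepsilon+(g-f)(u)w_\varepsilon),
\qquad w_\varepsilon=(\varphi_1-u+\varepsilon\psi)^+ .
\]
Since $\varphi_1$ is a subsolution and $w_\varepsilon\ge0$ has compact support, we subtract $0\ge\int \gradgr\varphi_1\cdot\gradgr w_\varepsilon+(g-f)(\varphi_1)w_\varepsilon$; this step needs a density argument via Lemma \ref{lem:density_grushin_space} to allow $H^1$ test functions. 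The right side is then bounded by
\[
-\int|\gradgr w_\varepsilon|^2+\varepsilon\int_{\{w_\varepsilon>0\}}\gradgr(\varphi_1-u)\cdot\gradgr\psi\cdot(\text{sign})
+\int\bigl[(g(u)-g(\varphi_1))-(f(u)-f(\varphi_1))\bigr]w_\varepsilon .
\]
On $\{w_\varepsilon>0\}$ we have $u<\varphi_1+\varepsilon\psi$, and monotonicity of $g$ (from the convexity of $G$) helps with the sign. The set $\{u<\varphi_1+\varepsilon\psi,\ \psi>0\}$ shrinks to $\{u=\varphi_1\}\cap\{\psi>0\}$, where $\gradgr(u-\varphi_1)=0$ a.e. Dividing by $\varepsilon$ and letting $\varepsilon\to0$ with dominated convergence, using $0\le w_\varepsilon\le\varepsilon\psi$, the right side is $o(\varepsilon)$. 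The delicate term is $g(u)w_\varepsilon/\varepsilon$; it is dominated by $|g(u)|\psi\in L^1$ from Step 1, while $g(\varphi_1)\in L^1_{loc}$ handles the $\varphi_1$ term. The limit gives the reverse inequality, so $u$ is a weak solution.

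Case (ii) is the mirror image, testing with $u-t\psi$ and $\min\{u+\varepsilon\psi,\varphi_2\}$. Case (iii) truncates in both directions simultaneously, using $K_{\varphi_1}^{\varphi_2}$, so that each side's correction term is handled by the corresponding sub- or supersolution.
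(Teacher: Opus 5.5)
Your proposal is correct and is essentially the paper's argument. The paper tests with $v_t=\max\{u+t\psi,\varphi_1\}$ for sign-changing $\psi$, subtracts the subsolution inequality tested against $(\varphi_1-u-t\psi)^+$, and passes to the limit exactly as in your Step 2; your split into an untruncated direction $\psi\ge0$ and a truncated direction $-\varepsilon\psi$ is just the two sign cases of that single computation.

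Two small corrections, neither of which affects the argument:
\begin{enumerate}
\item In Step 2, $G(\cdot,v)\in L^1(\Omega)$ follows directly from the hypothesis of (i), since $v\in K_{\varphi_1}$ and $v=u$ off $\operatorname{supp}\psi$. Lemma \ref{lem:property_G_1} is not the right tool here, because it would need $G^+(\cdot,\varphi_1)\in L^1_{loc}(\Omega)$, which is not assumed in (i).
\item Monotonicity of $g$ gives the unfavourable sign for $(g(z,u)-g(z,\varphi_1))w_\varepsilon$. That term is instead removed by your dominated-convergence argument.
\end{enumerate}
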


\begin{proof}
\textnormal{(i)} 
Since $g(\cdot,\varphi_1)$ is measurable and $G(z,u(z)) \in \real$ for almost every $z \in \om$.  Hence, by Lemma~\ref{lem:measurability_properties_G_g}\textnormal{(ii)}, the mapping $g(\cdot,u)$ is measurable.
For each $\psi_0 \in C_c^{\infty}(\Omega)$ with $\psi_0 \ge 0$, we have
\[
G(z, u+\psi_0) - G(z,u) \ge g(z,u)\,\psi_0 \ge g(z,\varphi_1)\,\psi_0,
\]
by the convexity of $G(z,\cdot)$ and increasing nature of $g(z,\cdot)$. This implies $g(\cdot,u)\,\psi_0 \in L^{1}(\Omega)$. By the arbitrariness of $\psi_0$, it follows that $g(\cdot,u) \in L_{loc}^{1}(\Omega)$.

Let $\psi$ be an arbitrary element of $C_c^{\infty}(\Omega)$. Fix $t \in (0,1]$ and define
\begin{equation*}
v_t = \max\{u + t\psi, \varphi_1\}.
\end{equation*}
Then $G(\cdot,v_t) \in L_{loc}^{1}(\Omega)$ and $G(z,v_t) = G(z,u)$ on $\Omega \setminus \operatorname{supp}\psi$. Consequently, $v_t \in D(I_{K_{\varphi_1}})$. Setting $\zeta_t = \bigl(\varphi_1 - (u + t\psi)\bigr)^{+}$, we obtain
\begin{equation}
\label{eq:vt_minus_u_decomposition}
v_t - u = t\psi + \zeta_t .
\end{equation}
Note that $\operatorname{supp} \zeta_t$ is compact and that $|\zeta_t(z)| \le t|\psi(z)|$ for every $z \in \Omega$.
By Lemma~\ref{lem:variational_inequality_characterization}, we have $g(\cdot,u)\,(v_t-u) \in L^{1}(\Omega)$ and
\begin{equation}\label{eq:vt_with_characterisation}
\begin{aligned}
0 &\le \int_{\Omega} \Bigl( \gradgr u \cdot \gradgr (v_t - u) + g(z,u)\,(v_t - u) - f(z,u)\,(v_t - u) \Bigr)\, dz, \\
& =t \int_{\Omega} \Bigl( \gradgr u \cdot \gradgr \psi + g(z,u)\,\psi - f(z,u)\,\psi \Bigr)\, dz  {} + \int_{\Omega} \Bigl( \gradgr u \cdot \gradgr \zeta_t + g(z,u)\, \zeta_t - f(z,u)\, \zeta_t \Bigr)\, dz,
\end{aligned}
\end{equation}
where we used the decomposition \eqref{eq:vt_minus_u_decomposition}.
Since we can find a sequence $\{w_k\} \subset C_c^{\infty}(\Omega)$ such that $w_k \ge 0$, the union
$\bigcup_k \operatorname{supp} w_k$ is contained in a compact subset of $\Omega$, the sequence
$\{\|w_k\|_{\infty}\}$ is bounded, and $\|w_k - \zeta_t\| \to 0$ as $k \to \infty$, using the fact that
$\varphi_1$ is a subsolution together with Lebesgue's convergence theorem, we obtain
\begin{equation}
\label{eq:inequality_with_phi1}
\int_{\Omega} \Bigl( \gradgr \varphi_1 \cdot \gradgr \zeta_t + g(z,\varphi_1)\, \zeta_t - f(z,\varphi_1)\, \zeta_t \Bigr)\, dz \le 0 .
\end{equation}
Subtracting~\eqref{eq:inequality_with_phi1} from~\eqref{eq:vt_with_characterisation} and using the fact that on $Supp(\zeta_t)$, we have $u - \varphi_1 = - (t \psi + \zeta_t)$, we conclude that
\begin{equation*}
\begin{aligned}
0 \le {} &
\int_{\Omega} \Bigl( \gradgr u \cdot \gradgr \psi + g(z,u)\, \psi - f(z,u)\, \psi \Bigr)\, dz
- \frac{1}{t}\int_{\Omega} |\gradgr \zeta_t|^{2}\, dz- \int_{\Omega} \gradgr \psi \cdot \gradgr \zeta_t \, dz \\
& {} + \int_{\Omega} \Biggl( \bigl(g(z,u) - g(z,\varphi_1)\bigr) \frac{\zeta_t}{t}
- \bigl(f(z,u) - f(z,\varphi_1)\bigr) \frac{\zeta_t}{t} \Biggr)\, dz .
\end{aligned}
\end{equation*}
Since $\frac{1}{t}\int_{\Omega} |\gradgr \zeta_t|^{2}\, dz \geq 0$,  $ \int_{\Omega} |\gradgr \zeta_t|^{2}\, dz \to 0$ as $t \to 0^{+}$, and since
$\lvert \zeta_t(z) \rvert / t \le \lvert \psi(z) \rvert$ for every $z \in \Omega$ and $t \in (0,1]$, while
$\operatorname{supp} \psi$ is compact, $\bigl(g(z,u) - g(z,\varphi_1)\bigr)\frac{\zeta_t}{t} \to 0$, and $\bigl(f(z,u) - f(z,\varphi_1)\bigr)\frac{\zeta_t}{t} \to 0$ a.e. in $\om$ as $t\to 0^+$, 
with $g(\cdot,u)$, $g(\cdot,\varphi_1)$, $f(\cdot,u)$, $f(\cdot,\varphi_1) \in L_{loc}^{1}(\Omega)$, the dominated convergence theorem yields
\begin{equation}
\label{eq:limit_variational_inequality}
0 \le \int_{\Omega} \Bigl( \gradgr u \cdot \gradgr \psi + g(z,u)\, \psi - f(z,u)\, \psi \Bigr)\, dz .
\end{equation}
Since $\psi \in C_c^{\infty}(\Omega)$ is arbitrary, it follows that $u$ is a weak solution of~\eqref{eq:main_problem}.

\textnormal{(ii)} By an argument analogous to part~\textnormal{(i)}, we obtain $g(\cdot,u) \in L_{loc}^{1}(\Omega)$. 
Let $\psi \in C_c^{\infty}(\Omega)$ and fix $t \in (0,1]$. Define $v_t = \min\{u + t\psi, \varphi_2\}$. Arguing as in part~\textnormal{(i)}, we have $v_t \in D(I_{K^{\varphi_2}})$. Setting $w_t = \bigl( (u + t\psi) - \varphi_2 \bigr)^{+}$, we obtain $v_t - u = t\psi - w_t$. 
Using Lemma~\ref{lem:variational_inequality_characterization} together with the fact that $\varphi_2$ is a supersolution, we derive inequality~\eqref{eq:limit_variational_inequality} in the same way as before. Hence $u$ is a weak solution of~\eqref{eq:main_problem}.

\medskip

\textnormal{(iii)} We observe that $\varphi_1 \le u \le \varphi_2$, $g(\cdot,\varphi_1),\, g(\cdot,\varphi_2) \in L_{loc}^{1}(\Omega)$, and for almost every $z \in \Omega$ the function $G(z,\cdot)$ is increasing on the interval $[\varphi_1(z), \varphi_2(z)]$. 
By Lemma~\ref{lem:measurability_properties_G_g}\textnormal{(ii)}, it follows that $g(\cdot,u)$ is measurable, and $g(\cdot,u) \in L_{loc}^{1}(\Omega)$.
Fix $\psi \in C_c^{\infty}(\Omega)$ and let $t \in (0,1]$. Define $v_t = \min\{ \max \{ u + t \psi , \varphi_1\},\varphi_2\}$. Since $\varphi_1 \le v_t \le \varphi_2$, by Lemma~\ref{lem:property_G_1} and assumption~\textnormal{(A2)} we have $G(\cdot,v_t) \in L_{loc}^{1}(\Omega)$. Moreover, since $G(\cdot,u) \in L^{1}(\Omega)$ and $G(z,v_t) = G(z,u)$ on $\Omega \setminus \operatorname{supp}\psi$, we deduce that $G(\cdot,v_t) \in L^{1}(\Omega)$, that is, $v_t \in D(I_{K_{\varphi_1}^{\varphi_2}})$.
Setting
\begin{equation*}
\zeta_t = \bigl( \varphi_1 - (u + t\psi) \bigr)^{+},
\qquad
w_t = \bigl( (u + t\psi) - \varphi_2 \bigr)^{+},
\end{equation*}
we obtain $v_t - u = t\psi + \zeta_t - w_t$. Using Lemma~\ref{lem:variational_inequality_characterization} and the fact that $\varphi_1$ and $\varphi_2$ are, respectively, a subsolution and a supersolution, we can again derive inequality~\eqref{eq:limit_variational_inequality}. Consequently, $u$ is a weak solution of~\eqref{eq:main_problem}.
\end{proof}

\textbf{(A3)} Let $1 \le \bar{p} \le 2^{*}_{\gamma} - 1$. 
Assume that there exist a function $a_4$ and a constant $c_2$ such that
\begin{equation*}
\frac{f(z,u) - f(z,v)}{u - v}
\le a_4(z) + c_2 \bigl( \max \{ |u|, |v|\} \bigr)^{\bar{p}-1}
\end{equation*}
for almost every $z \in \Omega$ and for all $u, v \in \mathbb{R}$ with $u \ne v$, where $a_4 \in L^{(\bar{p}+1)/(\bar{p}-1)}(\Omega)$ is a nonnegative function and $c_2 \geq 0$.
Now, we have a Brezis-Nirenberg type theorem for the Grushin operator in the nonsmooth analysis setup inspired from the \cite[Theorem $3$]{hirano_concave_convex}.

\begin{theorem}
\label{thm:local_minimizer_from_supersolution}
Suppose that assumptions \textnormal{(A1)}-\textnormal{(A3)} are satisfied. Let $\varphi_1,\varphi_2:\Omega \to [-\infty,\infty]$ be such that $\varphi_1 \leq \varphi_2$, and assume that $\varphi_2$ is a supersolution of~\eqref{eq:main_problem}. Let $u \in D(I_{K_{\varphi_1}^{\varphi_2}})$ be a minimizer of the functional $I_{K_{\varphi_1}^{\varphi_2}}$. In addition, suppose that one of the following holds:
\begin{itemize}
    \item[(i)] $u(z) < \varphi_2(z)$ for almost every $z \in \Omega$;
    \item[(ii)] $\varphi_2$ is a strict supersolution of~\eqref{eq:main_problem}.
\end{itemize}
Then $u$ is a local minimizer of $I_{K_{\varphi_1}}$. Furthermore, if condition \textnormal{(ii)} is satisfied, and $\bar p < 2_{\gamma}^{*}-1$, then there exists $\rho_0>0$ such that
\begin{equation}
\label{eq:strict_local_minimum_estimate}
I_{K_{\varphi_1}}(u)
<
\inf \left\{
I_{K_{\varphi_1}}(v):\, v \in K_{\varphi_1},\ 
\bigl\|(v-\varphi_2)^{+}\bigr\|=\rho
\right\}
\quad \text{for every } \rho \in (0,\rho_0].
\end{equation}
\end{theorem}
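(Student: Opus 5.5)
The plan follows Hirano--Saccon--Shioji (Theorem 3 of \cite{hirano_concave_convex}). Take $v \in K_{\varphi_1}$ with $G(\cdot,v)\in L^1(\Omega)$. Write $w=(v-\varphi_2)^+$, and compare $v$ with its truncation $\tilde v=\min\{v,\varphi_2\}=v-w$. Since $\varphi_1\le\varphi_2$, we have $\tilde v\in K_{\varphi_1}^{\varphi_2}$. By Lemma \ref{lem:property_G_1} and (A2)(iii), $G(\cdot,\tilde v)\in L^1(\Omega)$, so $I(\tilde v)\ge I(u)$ because $u$ is a minimizer. It therefore suffices to show that $I(v)-I(\tilde v)\ge 0$, or $>0$ under (ii), whenever $\|v-u\|$ is small.

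\textbf{Key identity.} On $\{v>\varphi_2\}$ we have $\tilde v=\varphi_2$, and elsewhere $w=0$. Hence
\begin{equation*}
I(v)-I(\tilde v)=\int_\Omega\Bigl(\gradgr\varphi_2\cdot\gradgr w+\tfrac12|\gradgr w|^2\Bigr)dz+\int_\Omega\bigl(G(z,\varphi_2+w)-G(z,\varphi_2)\bigr)dz-\int_\Omega\bigl(F(z,\varphi_2+w)-F(z,\varphi_2)\bigr)dz .
\end{equation*}
By convexity of $G$, $G(z,\varphi_2+w)-G(z,\varphi_2)\ge g(z,\varphi_2)w$. By the mean value theorem and (A3),
\begin{equation*}
F(z,\varphi_2+w)-F(z,\varphi_2)\le f(z,\varphi_2)w+\bigl(a_4+c_2\max\{|\varphi_2|,|v|\}^{\bar p-1}\bigr)w^2 .
\end{equation*}
To use the supersolution inequality against $w\ge0$, first approximate $w$ by compactly supported bounded functions via Lemma \ref{lem:density_grushin_space} and pass to the limit, as in Proposition \ref{prop:sub_super_solution_criterion}. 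This gives
\begin{equation*}
\int_\Omega\bigl(\gradgr\varphi_2\cdot\gradgr w+g(z,\varphi_2)w-f(z,\varphi_2)w\bigr)dz\ge0 ,
\end{equation*}
and therefore
\begin{equation*}
I(v)-I(\tilde v)\ge\tfrac12\|w\|^2-\int_{\{v>\varphi_2\}}\bigl(a_4+c_2\max\{|\varphi_2|,|v|\}^{\bar p-1}\bigr)w^2\,dz .
\end{equation*}
Note that $|\varphi_2|\le|u|+|v-u|$ is not needed directly: on the support of $w$ we have $\varphi_2<v$, and also $u\le\varphi_2$, so $|\varphi_2|\le|u|+|v|$.

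\textbf{Smallness of the error term.} Apply H\"older with exponents $\frac{\bar p+1}{\bar p-1}$ and $\frac{\bar p+1}{2}$, then the Sobolev embedding \eqref{ineq:Sobolev_embedding_grushin}. This bounds the integral by
\begin{equation*}
\Bigl(\|a_4\|_{L^{\frac{\bar p+1}{\bar p-1}}(\{v>\varphi_2\})}+C\|(|u|+|v|)\chi_{\{v>\varphi_2\}}\|_{\bar p+1}^{\bar p-1}\Bigr)\|w\|^2 .
\end{equation*}
Under (i), $u<\varphi_2$ a.e. On $\{v>\varphi_2\}$ we then have $v-u>\varphi_2-u>0$. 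So as $\|v-u\|\to0$, the measure of $\{v>\varphi_2\}\cap\{\varphi_2-u>\eta\}$ tends to zero for every $\eta>0$, while the measure of $\{0<\varphi_2-u\le\eta\}$ tends to zero as $\eta\to0$. Hence $|\{v>\varphi_2\}|\to0$. Combining this with the $L^{\bar p+1}$ convergence of $v\to u$ and absolute continuity of the integrals, the bracket becomes $<\frac12$, so $I(v)\ge I(\tilde v)\ge I(u)$.

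This is the main obstacle in the critical case $\bar p=2^*_\gamma-1$: there the estimate rests on smallness of the measure of the set, not on a compactness gain, so the argument has to be arranged so that equi-integrability of $|v|^{2^*_\gamma}$ on small sets comes from strong convergence $v\to u$ in $L^{2^*_\gamma}$.

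\textbf{Case (ii).} Here one cannot use smallness of the set. Argue by contradiction: suppose $v_k\to u$ with $I(v_k)<I(u)$. Set $w_k=(v_k-\varphi_2)^+\neq0$ and normalize $\hat w_k=w_k/\|w_k\|$. The strict supersolution inequality applied to $w_k$ gives a positive first-order term. Dividing by $\|w_k\|$ and passing to weak limits, we combine three facts: $\hat w_k\rightharpoonup\hat w\ge0$; the quadratic error is $o(\|w_k\|)$, since $\|w_k\|\le\|(v_k-u)^+\|$ in the appropriate sense, as $u\le\varphi_2$ gives $w_k\le(v_k-u)^+$; and strict positivity from (vi). These should yield a contradiction. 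The strict local minimum estimate \eqref{eq:strict_local_minimum_estimate} for $\bar p<2^*_\gamma-1$ then follows from the same inequality. For $\|w\|=\rho$ small, the error is at most $C(\rho^{2}+\rho^{\bar p+1})$ with compactness, because the embedding into $L^{\bar p+1}$ is compact, while the strict supersolution gives a lower bound $c\rho$. This is obtained by arguing by contradiction along sequences with $\|w_k\|=\rho_k\to0$, using weak-to-strong convergence in $L^{\bar p+1}$ to preserve the strict inequality in the limit.
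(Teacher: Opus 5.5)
Your case (i) is correct and follows the paper; your justification of $|\{v>\varphi_2\}|\to0$ is in fact more complete than the paper's. The gap is in case (ii).

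Write $\mathcal{L}(w)=\int_\Omega(\gradgr\varphi_2\cdot\gradgr w+g(z,\varphi_2)w-f(z,\varphi_2)w)\,dz$. When you divide by $\|w_k\|$ and treat the whole quadratic part as $o(\|w_k\|)$, you also throw away the good term $\frac12\|w_k\|^2$. All that survives is $\limsup_k\mathcal{L}(\hat w_k)\le0$ with $\hat w_k\rightharpoonup\hat w\ge0$. Weak lower semicontinuity and condition (vi) contradict this only if $\hat w\neq0$. Nothing in your three facts rules out $\hat w_k\rightharpoonup0$, because (vi) gives pointwise positivity of $\mathcal{L}$, not coercivity.

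For the same reason, your bound $\mathcal{L}(w)\ge c\rho$ on $\{\|w\|=\rho\}$ is false in general. For instance, if $-\Gr\varphi_2+g(z,\varphi_2)-f(z,\varphi_2)\le C$, then $0<\mathcal{L}(w)\le C\int_\Omega w\,dz$. This tends to $0$ along the normalized bubbles $u_\varepsilon/\|u_\varepsilon\|$.

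What is missing is the paper's second-order dichotomy. Keep the full inequality $I(v)-I(u)\ge\frac12\|w\|^2+\mathcal{L}(w)-\frac M2\|w\|_{\bar p+1}^2$.
\begin{itemize}
\item If $\|w\|^2\ge2M\|w\|_{\bar p+1}^2$, the term $\frac12\|w\|^2$ absorbs the error. Using only $\mathcal{L}(w)\ge0$, this gives $I(v)-I(u)\ge\frac14\|w\|^2$.
\item Otherwise $w/\|w\|_{\bar p+1}$ lies in $\mathcal{B}=\{w\ge0,\ \|w\|_{\bar p+1}=1,\ \|w\|\le\sqrt{2M}\}$. On this set weak limits stay nonzero by the compact embedding into $L^{\bar p+1}$. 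Hence $\nu=\inf_{\mathcal{B}}\mathcal{L}>0$, and $\mathcal{L}(w)\ge\nu\|w\|_{\bar p+1}$ beats $\frac M2\|w\|_{\bar p+1}^2$.
\end{itemize}
In your sequence language: if $\hat w=0$, compactness makes the error $o(\|w_k\|^2)$, and it is $\frac12\|w_k\|^2$ that gives the contradiction. Condition (vi) does the work only when $\hat w\neq0$.

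In the critical case, $\hat w_k\rightharpoonup0$ does not by itself make $\int\eta_{v_k}\hat w_k^2$ small, so an extra step is needed. The paper dominates $|v_k|$ by a fixed $h\in L^{2^*_\gamma}$, obtained from $\|v_k-u\|\le2^{-k}$. It then cuts $\tilde\eta_{v_k}$ at a level $\tilde M$ so that the large part costs at most $\frac14\|w_k\|^2$ uniformly in $k$, and runs the same dichotomy with the $L^2$ norm. Strong convergence $v_k\to u$ in $L^{2^*_\gamma}$ would give the same equi-integrability.

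Finally, $\|w_k\|\to0$ does not follow from the pointwise bound $w_k\le(v_k-u)^+$, since pointwise domination does not control the gradient norm. The claim is true, but it needs $|\{v_k>\varphi_2>u\}|\to0$ together with absolute continuity of $\int|\gradgr(\varphi_2-u)|^2$.
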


\begin{proof}
We define the truncation operator
\begin{equation*}
\pi(v) = \min\{ v, \varphi_2\} = v - (v-\varphi_2)^{+},
\end{equation*}
and, for $z \in \Omega$,
\begin{equation*}
\eta_{v}(z) = \bigl( a_4(z) + c_2 ( \max\{ |v(x)|, |\varphi_2(x)|\} ^{\bar{p}-1} \bigr)\, \chi_{\{v>\varphi_2\}}(x),
\end{equation*}
for any $v \in K_{\varphi_1}$ with $\chi_{v > \phi_2}$ denotes the characteristic function. We also introduce the linear functional
\begin{equation*}
\mathcal{L}w = \int_{\Omega} \bigl( \gradgr \varphi_2 \cdot \gradgr w + g(z,\varphi_2)\, w - f(z,\varphi_2)\, w \bigr)\, dz,
\end{equation*}
for any $w \in \spacegr$ with $w \ge 0$.

As $G(\cdot, \pi(v)(\cdot)), G(\cdot, v(\cdot)) \in \real$, by Lemma~\ref{lem:measurability_properties_G_g}\textnormal{(i)} and \textnormal{(iii)}, the functions $G(\cdot,\pi(v)(\cdot))$ and $g(\cdot,\pi(v)(\cdot))\,(v(\cdot)-\pi(v)(\cdot))$ are measurable. 
Since $\pi(v) \in K_{\varphi_1}^{\varphi_2}$, using the minimality of $u$ in $K_{\varphi_1}^{\varphi_2}$, convexity of $G(z,\cdot)$ and assumption~\textnormal{(A3)}, we obtain
\begin{equation}
\label{eq:IK_key_inequality}
\begin{aligned}
I_{K_{\varphi_1}}(v) - I_{K_{\varphi_1}}(u) &\ge  I_{K_{\varphi_1}}(v) - I_{K_{\varphi_1}}(\pi(v)) \\
& = \frac{1}{2} \| v - \pi(v) \|^{2}
+ \int_{\Omega} \gradgr \pi(v) \cdot \gradgr (v-\pi(v))\, dz \\
& \quad + \int_{\Omega} \bigl( G(z,v) - G(z,\pi(v)) \bigr)\, dz
- \int_{\Omega} \bigl( F(z,v) - F(z,\pi(v)) \bigr)\, dz \\
& \ge  \frac{1}{2} \, \| v - \pi(v) \|^{2}
+ \int_{\Omega} \gradgr \pi(v) \cdot \gradgr (v-\pi(v))\, dz \\
& \quad + \int_{\Omega} g(z,\pi(v))\, (v-\pi(v))\, dz
- \int_{\Omega} f(z,\pi(v))\, (v-\pi(v))\, dz \\
& \quad  - \int_{\Omega} \Bigl( F(z,v) - F(z,\pi(v)) - f(z,\pi(v))\, (v-\pi(v)) \Bigr)\, dz \\
& = \frac{1}{2} \| v - \pi(v) \|^{2}
+ \int_{\Omega} \gradgr \varphi_2 \cdot \gradgr (v-\pi(v))\, dz \\
& \quad + \int_{\Omega} g(z,\varphi_2)\, (v-\pi(v))\, dz
- \int_{\Omega} f(z,\varphi_2)\, (v-\pi(v))\, dz \\
& \quad - \int_{\Omega} \int_{\pi(v)}^{v} \bigl( f(z,t) - f(z,\pi(v)) \bigr)\, dt \, dz \\
& \ge \frac{1}{2} \| v - \pi(v) \|^{2} + L\bigl( v - \pi(v) \bigr)
- \frac{1}{2} \int_{\Omega} \eta_{v}(z)\, (v-\pi(v))^{2}\, dz .
\end{aligned}
\end{equation}

First, we consider the theorem under assumption~\textnormal{(i)}. We show that $\| \eta_{v} \|_{(\bar{p}+1)/(\bar{p}-1)} \to 0$ as $\lv u - v \rv \to 0$ and this will prove our assertion. Suppose, on the contrary, that it fails. Then there exists a sequence $\{v_k\} \subset K_{\varphi_1}$ such that $\| v_k - u \| \le 1/k$ and $\| \eta_{v_k} \|_{(\bar{p}+1)/(\bar{p}-1)} > C_1$ for some constant $C_1 > 0$.
Using the Holder inequality and the fact that $| \{ v_k > \varphi_2\}| \to 0 $ as $k \to \infty$, we get
\begin{equation*}
\begin{aligned}
\| \eta_{v_k} \|_{\frac{\bar{p}+1}{\bar{p}-1}}& \leq  \left( \int_{\{ v_k > \varphi_2 \}} | a_4(z) |^{\frac{\bar{p}+1}{\bar{p}-1}} \, dz \right)^{\frac{\bar{p}-1}{\bar{p}+1}}  \\
& \quad + c_2 \left( \left( \int_{\{ v_k > \varphi_2 \}} | v_k(z) |^{\bar{p}+1} \, dz \right)^{\frac{1}{\bar{p}+1}} + \left( \int_{\{ v_k > \varphi_2 \}} | \varphi_2(z) |^{\bar{p}+1} \, dz \right)^{\frac{1}{\bar{p}+1}}
\right)^{\bar{p}-1}\\
& \leq  \left( \int_{\{ v_k > \varphi_2 \}} | a_4(z) |^{\frac{\bar{p}+1}{\bar{p}-1}} \, dz \right)^{\frac{\bar{p}-1}{\bar{p}+1}} \\
& {} \quad + c_2 \left(
\frac{C}{k}
+ \left( \int_{\{ v_k > \varphi_2 \}} | u(z) |^{\bar{p}+1} \, dz \right)^{\frac{1}{\bar{p}+1}}
+ \left( \int_{\{ v_k > \varphi_2 \}} | \varphi_2(z) |^{\bar{p}+1} \, dz \right)^{\frac{1}{\bar{p}+1}}
\right)^{\bar{p}-1}.
\end{aligned}
\end{equation*}
Using the Fatau's Lemma in the above, we have our assertion. Hence, $u$ is a local minimizer of $I_{K_{\varphi_1}}$.
\medskip

Next, we consider the theorem under assumption~\textnormal{(ii)} in the subcritical case, i.e., $1 \le \bar{p} < 2_{\gamma}^{*}-1$. Then there exists $M>0$ such that
\begin{equation*}
\| \eta_{v} \|_{(\bar{p}+1)/(\bar{p}-1)} \le M
\end{equation*}
for each $v \in K_{\varphi_1}$ with $\bigl\| (v-\varphi_2)^{+} \bigr\| \le 1$. Using~\eqref{eq:IK_key_inequality}, we obtain
\begin{equation*}
I_{K_{\varphi_1}}(v) - I_{K_{\varphi_1}}(u)
\ge \frac{1}{2} \bigl\| (v-\varphi_2)^{+} \bigr\|^{2}
+ \mathcal{L}\bigl( (v-\varphi_2)^{+} \bigr)
- \frac{M}{2} \bigl\| (v-\varphi_2)^{+} \bigr\|_{\bar{p}+1}^{2},
\end{equation*}
for each $v \in K_{\varphi_1}$ with $\bigl\| (v-\varphi_2)^{+} \bigr\| \le 1$. \\
If $\bigl\| (v-\varphi_2)^{+} \bigr\|^{2} \ge 2M \bigl\| (v-\varphi_2)^{+} \bigr\|_{\bar{p}+1}^{2}$, then 
\begin{equation}
\label{eq:case_one_conclusion}
I_{K_{\varphi_1}}(v) - I_{K_{\varphi_1}}(u)
\ge \frac{1}{4} \bigl\| (v-\varphi_2)^{+} \bigr\|^{2} + \mathcal{L}((v-\varphi_2)) 
\ge \frac{1}{4} \bigl\| (v-\varphi_2)^{+} \bigr\|^{2}.
\end{equation}
On the other hand, if $\bigl\| (v-\varphi_2)^{+} \bigr\|^{2} \le 2M \bigl\| (v-\varphi_2)^{+} \bigr\|_{\bar{p}+1}^{2}$, set
\begin{equation*}
\nu = \inf \{ \mathcal{L}w : w \in \mathcal{B} \},
\end{equation*}
where $\mathcal{B} = \{ w \in \spacegr : w \ge 0,\ \| w \|_{\bar{p}+1} = 1,\ \| w \| \le \sqrt{2M} \}$. Since $g(z,\varphi_2)^{-} \in L^{\frac{2Q}{Q+2}}(\Omega)$, by dominated convergence theorem and Fatau's Lemma the functional $\mathcal{L}$ is weakly lower semicontinuous on $\mathcal{B}$. But as $\mathcal{B}$ is weakly compact and  $\varphi_2$ is a strict supersolution of~\eqref{eq:main_problem}. This yields $\nu>0$. \\Choose $\rho_0 \in (0,1]$ such that, for each $v \in K_{\varphi_1}$ with $\bigl\| (v-\varphi_2)^{+} \bigr\| \le \rho_0$, one has $\bigl\| (v-\varphi_2)^{+} \bigr\|_{\bar{p}+1} \le \frac{\nu}{M}$. Fix $\rho \in (0,\rho_0]$ and take $v \in K_{\varphi_1}$ with $\bigl\| (v-\varphi_2)^{+} \bigr\| = \rho$. Then
\begin{equation}
\label{eq:case_two_conclusion}
\begin{aligned}
I_{K_{\varphi_1}}(v) - I_{K_{\varphi_1}}(u)
& \ge \left( \nu - \frac{M}{2} \bigl\| (v-\varphi_2)^{+} \bigr\|_{\bar{p}+1} \right)
      \bigl\| (v-\varphi_2)^{+} \bigr\|_{\bar{p}+1} \ge \frac{\nu}{2\sqrt{2M}}\, \rho .
\end{aligned}
\end{equation}
From~\eqref{eq:case_one_conclusion} and~\eqref{eq:case_two_conclusion}, we obtain the desired conclusion.

\medskip

Finally, under assumption~\textnormal{(ii)}, we consider the case $\bar{p} = 2_{\gamma}^{\ast} - 1$. 
Assume by contradiction that the conclusion fails. Then there exists a sequence $\{v_k\} \subset \spacegr$ such that
\begin{equation*}
v_k \ge \varphi_1, \qquad \| v_k - u \| \le 2^{-k}, \qquad I_{K_{\varphi_1}}(v_k) < I_{K_{\varphi_1}}(u)
\quad \text{for all } k .
\end{equation*}
Define
\begin{equation*}
h = u + \sum_{k=1}^{\infty} | v_k - u | ,
\end{equation*}
so that $|v_k| \le h$ almost everywhere for all $k$. Setting
\begin{equation*}
\tilde{\eta}_v(z) = \bigl( a_4(z) + c_2 ( \max \{ h(z), |\varphi_2(z)| \}^{2_{\gamma}^{\ast}-2} \bigr)\, \chi_{\{ v > \varphi_2 \}}(z),
\end{equation*}
for every $v \in D(I_{K_{\varphi_1}})$, we infer
\begin{equation*}
\begin{aligned}
0 &> I_{K_{\varphi_1}}(v_k) - I_{K_{\varphi_1}}(u)
\ge I_{K_{\varphi_1}}(v_k) - I_{K_{\varphi_1}}(\pi(v_k)) \\
&\ge \frac{1}{2} \bigl\| (v_k-\varphi_2)^{+} \bigr\|^{2}
+ \mathcal{L}\bigl( (v_k-\varphi_2)^{+} \bigr)
- \frac{1}{2} \int_{\Omega} \tilde{\eta}_{v_k}(z)\, \bigl( (v_k-\varphi_2)^{+} \bigr)^{2}\, dz \\
& \ge \frac{1}{2} \bigl\| (v_k-\varphi_2)^{+} \bigr\|^{2}
+ \mathcal{L}\bigl( (v_k-\varphi_2)^{+} \bigr)
- \frac{1}{2} \int_{\{ \tilde{\eta}_{v_k} \le \tilde{M} \}} \tilde{\eta}_{v_k}(z)\, \bigl( (v_k-\varphi_2)^{+} \bigr)^{2}\, dz \\
& \qquad
- \frac{1}{2} \int_{\{ \tilde{\eta}_{v_k} > \tilde{M} \}} \tilde{\eta}_{v_k}(z)\, \bigl( (v_k-\varphi_2)^{+} \bigr)^{2}\, dz \\
& \ge \frac{1}{2} \bigl\| (v_k-\varphi_2)^{+} \bigr\|^{2}
+ \mathcal{L}\bigl( (v_k-\varphi_2)^{+} \bigr)
- \frac{\tilde{M}}{2} \int_{\Omega} \bigl| (v_k-\varphi_2)^{+} \bigr|^{2}\, dz \\
& \qquad
- \frac{1}{2S} \left( \int_{\{ \tilde{\eta}_{v_k} > \tilde{M} \}} \lvert \tilde{\eta}_{v_k}(z) \rvert^{\frac{2_{\gamma}^{\ast}}{2_{\gamma}^{\ast}-2}}\, dz \right)^{\frac{2_{\gamma}^{\ast}-2}{2_{\gamma}^{\ast}}}
\bigl\| (v_k-\varphi_2)^{+} \bigr\|^{2},
\end{aligned}
\end{equation*}
where $S$ denotes the Sobolev constant. Since we can choose $\tilde{M} > 0$ such that
\begin{equation*}
\frac{1}{2S}
\left(
\int_{\{ \tilde{\eta}_{v_k} > \tilde{M} \}}
\lvert \tilde{\eta}_{v_k}(z) \rvert^{\frac{2_{\gamma}^{\ast}}{2_{\gamma}^{\ast}-2}}\, dz
\right)^{\frac{2_{\gamma}^{\ast}-2}{2_{\gamma}^{\ast}}}
< \frac{1}{4}
\end{equation*}
for all $k$, we obtain
\begin{equation*}
0 >
\mathcal{L}\bigl( (v_k-\varphi_2)^{+} \bigr)
+ \frac{1}{4} \bigl\| (v_k-\varphi_2)^{+} \bigr\|^{2}
- \frac{\tilde{M}}{2} \bigl\| (v_k-\varphi_2)^{+} \bigr\|_{2}^{2}
\quad \text{for all } k .
\end{equation*}
If $\bigl\| (v_k-\varphi_2)^{+} \bigr\|^{2}
\ge 4\tilde{M}\, \bigl\| (v_k-\varphi_2)^{+} \bigr\|_{2}^{2}$, then 
\begin{equation*}
0 >
\frac{1}{4} \bigl\| (v_k-\varphi_2)^{+} \bigr\|^{2}
- \frac{1}{8} \bigl\| (v_k-\varphi_2)^{+} \bigr\|^{2}
= \frac{1}{8} \bigl\| (v_k-\varphi_2)^{+} \bigr\|^{2}.
\end{equation*}
On the other hand, if $\bigl\| (v_k-\varphi_2)^{+} \bigr\|^{2}
\le 4\tilde{M}\, \bigl\| (v_k-\varphi_2)^{+} \bigr\|_{2}^{2}$, we define
\begin{equation*}
\tilde{\nu} = \inf \{ \mathcal{L}w : w \in \tilde{\mathcal{B}} \},
\end{equation*}
where $\tilde{\mathcal{B}} = \{ w \in \spacegr : w \ge 0,\ \| w \|_{2} = 1,\ \| w \| \le 2\sqrt{\tilde{M}} \}$. As in the previous case, we have $\tilde{\nu} > 0$. Fix $k$ such that
\begin{equation*}
\bigl\| (v_k-\varphi_2)^{+} \bigr\|_{2} \le \frac{\tilde{\nu}}{\tilde{M}} .
\end{equation*}
Then
\begin{equation*}
0 >
\left(
\tilde{\nu} - \frac{\tilde{M}}{2} \bigl\| (v_k-\varphi_2)^{+} \bigr\|_{2}
\right)
\bigl\| (v_k-\varphi_2)^{+} \bigr\|_{2}
\ge
\frac{\tilde{\nu}}{4\sqrt{\tilde{M}}}\, \bigl\| (v_k-\varphi_2)^{+} \bigr\|.
\end{equation*}
In both cases we reach a contradiction. 
\end{proof}

As a dual statement to the previous theorem, we obtain the following result.

\begin{theorem}
\label{thm:dual_local_minimizer}
Suppose that assumptions \textnormal{(A1)}--\textnormal{(A3)} hold. Let $\varphi_1,\varphi_2:\Omega \to [-\infty,\infty]$ be such that $\varphi_1 \leq \varphi_2$, and assume that $\varphi_1$ is a subsolution of~\eqref{eq:main_problem}. Let $u \in D(I_{K_{\varphi_1}^{\varphi_2}})$ be a minimizer of the functional $I_{K_{\varphi_1}^{\varphi_2}}$. In addition, suppose that one of the following conditions is satisfied:
\begin{itemize}
    \item[(i)] $u(z) > \varphi_1(z)$ for almost every $z \in \Omega$;
    \item[(ii)] $\varphi_1$ is a strict subsolution of~\eqref{eq:main_problem}.
\end{itemize}
Then $u$ is a local minimizer of $I_{K^{\varphi_2}}$. Furthermore, if condition \textnormal{(ii)} holds and $\bar p < 2_{\gamma}^{\ast}-1$, then there exists $\rho_0>0$ such that
\begin{equation*}
I_{K^{\varphi_2}}(u)
<
\inf \left\{
I_{K^{\varphi_2}}(v):\, v \in K^{\varphi_2},\ 
\bigl\|(\varphi_1-v)^{+}\bigr\|=\rho
\right\}
\quad \text{for all } \rho \in (0,\rho_0].
\end{equation*}
\end{theorem}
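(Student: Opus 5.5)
The plan is to mirror the proof of Theorem~\ref{thm:local_minimizer_from_supersolution}, with the roles of $\varphi_1$ and $\varphi_2$ exchanged. We replace the truncation $\pi$ by the lower truncation
\[
\sigma(v)=\max\{v,\varphi_1\}=v+(\varphi_1-v)^{+},
\]
defined for $v\in K^{\varphi_2}$. Since $\varphi_1\le\varphi_2$, we have $\sigma(v)\in K_{\varphi_1}^{\varphi_2}$. By Lemma~\ref{lem:property_G_1} and Lemma~\ref{lem:measurability_properties_G_g}, $G(\cdot,\sigma(v))$ is integrable and $g(\cdot,\sigma(v))(v-\sigma(v))$ is measurable whenever $v\in D(I_{K^{\varphi_2}})$.

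The minimality of $u$ on $K_{\varphi_1}^{\varphi_2}$ gives $I_{K^{\varphi_2}}(v)-I_{K^{\varphi_2}}(u)\ge I(v)-I(\sigma(v))$. We expand as in \eqref{eq:IK_key_inequality}, writing $w=(\varphi_1-v)^+$, so that $v-\sigma(v)=-w$, $\nabla_\gamma\sigma(v)=\nabla_\gamma\varphi_1$ on $\{w>0\}$, and $\sigma(v)=\varphi_1$ there. Convexity of $G$ gives $G(z,v)-G(z,\sigma(v))\ge -g(z,\varphi_1)w$. For the $F$ term, (A3) bounds $f(z,t)-f(z,\varphi_1)$ from above when $t<\varphi_1$ is replaced by its counterpart with reversed sign. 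Concretely,
\[
-\int_{v}^{\varphi_1}\bigl(f(z,\varphi_1)-f(z,t)\bigr)\,dt\ \ge\ -\tfrac12\,\eta_v\,w^2,
\]
where
\[
\eta_v=\bigl(a_4+c_2\max\{|v|,|\varphi_1|\}^{\bar p-1}\bigr)\chi_{\{v<\varphi_1\}}.
\]
Collecting terms, we expect
\[
I_{K^{\varphi_2}}(v)-I_{K^{\varphi_2}}(u)\ \ge\ \tfrac12\|w\|^2+\widetilde{\mathcal L}(w)-\tfrac12\int_\Omega\eta_v w^2\,dz,
\]
with
\[
\widetilde{\mathcal L}(w)=-\int_\Omega\bigl(\nabla_\gamma\varphi_1\cdot\nabla_\gamma w+g(z,\varphi_1)w-f(z,\varphi_1)w\bigr)\,dz.
\]
Since $\varphi_1$ is a subsolution, $\widetilde{\mathcal L}(w)\ge0$ for $w\ge0$, after approximating $w$ by nonnegative test functions as in Proposition~\ref{prop:sub_super_solution_criterion} (Lemma~\ref{lem:density_grushin_space}). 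If $\varphi_1$ is a strict subsolution, $\widetilde{\mathcal L}(w)>0$ for $w\neq0$.

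Under (i), we argue as before. If $\|v_k-u\|\to0$ with $v_k\in K^{\varphi_2}$, then $|\{v_k<\varphi_1\}|\to0$ because $u>\varphi_1$ a.e. Hölder's inequality then gives $\|\eta_{v_k}\|_{(\bar p+1)/(\bar p-1)}\to0$. Hence the negative term is bounded by $o(1)\|w\|_{\bar p+1}^2\le o(1)\|w\|^2$, and the right-hand side is nonnegative near $u$, so $u$ is a local minimizer.

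Under (ii) with $\bar p<2_\gamma^*-1$, the weights $\eta_v$ are uniformly bounded in $L^{(\bar p+1)/(\bar p-1)}$ on $\{\|w\|\le1\}$. We then split into two cases, $\|w\|^2\ge 2M\|w\|_{\bar p+1}^2$ or its reverse, as in \eqref{eq:case_one_conclusion}--\eqref{eq:case_two_conclusion}. In the second case we define
\[
\tilde\nu=\inf\{\widetilde{\mathcal L}(w):\ w\ge0,\ \|w\|_{\bar p+1}=1,\ \|w\|\le\sqrt{2M}\}.
\]
It is positive by weak lower semicontinuity, which uses $g^+(\cdot,\varphi_1)\in L^{2Q/(Q+2)}$, Fatou's lemma and the compact embedding, together with weak compactness of this set and strictness. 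This yields the uniform lower bound $c\rho$ on $\{\|(\varphi_1-v)^+\|=\rho\}$ for small $\rho$. For $\bar p=2_\gamma^*-1$ and (ii), we run the contradiction argument with the dominating function $h=u+\sum|v_k-u|$. We choose $\tilde M$ so that the high part of $\tilde\eta$ is absorbed by the Sobolev constant, and treat the remaining $L^2$ part via $\tilde\nu>0$ exactly as in the previous proof.

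The main obstacle is the sign bookkeeping in the $F$-term and in the semicontinuity of $\widetilde{\mathcal L}$. Now $g(\cdot,\varphi_1)$ enters with a minus sign, so the relevant integrability is $g^+(\cdot,\varphi_1)\in L^{2Q/(Q+2)}$. This is precisely what the strict subsolution definition provides. It also gives that $-\int g(z,\varphi_1)w\,dz$ is weakly lower semicontinuous via Fatou's lemma on the negative part, combined with dominated convergence on the positive part.
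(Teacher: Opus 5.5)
Your proposal is correct and follows the route the paper intends. The paper gives no separate proof and states the result as the dual of Theorem~\ref{thm:local_minimizer_from_supersolution}; your argument is that proof run with the lower truncation $\max\{v,\varphi_1\}$, the weight $\eta_v$ supported on $\{v<\varphi_1\}$, the sign-reversed functional $\widetilde{\mathcal L}$, and the integrability $g^{+}(\cdot,\varphi_1)\in L^{2Q/(Q+2)}(\Omega)$ in place of $g^{-}(\cdot,\varphi_2)$, with the signs in the $F$-term and in the lower semicontinuity of $\widetilde{\mathcal L}$ handled correctly.
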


Now, with the help of the Theorem \ref{thm:local_minimizer_from_supersolution} and Theorem \ref{thm:dual_local_minimizer}, we will prove a Brezis-Nirenberg type result using the Perron-type method inspired from Theorem $4$ of \cite{hirano_concave_convex}.

\begin{theorem}
\label{thm:existence_between_sub_super}
Assume that \textnormal{(A1)}-\textnormal{(A3)} are satisfied. Let $\varphi_1$ and $\varphi_2$ be, respectively, a subsolution and a supersolution of~\eqref{eq:main_problem}, and suppose that $\varphi_2$ is not a solution of~\eqref{eq:main_problem}. Assume further that $\varphi_1,\varphi_2 \in L^{p+1}(\Omega)$, that $G(\cdot,\varphi_1),\,G(\cdot,\varphi_2)\in L_{loc}^{1}(\Omega)$, and that $\varphi_1 \leq \varphi_2$ almost everywhere in $\Omega$. 

Suppose there exists $v \in \spacegr$ such that $\varphi_1 \leq v \leq \varphi_2$ and $G(\cdot,v)\in L^{1}(\Omega)$. In addition, assume that one of the following conditions holds:
\begin{itemize}
    \item[(i)] For every subdomain $\Omega' \Subset \Omega$, there exists a constant $M>0$ such that
    \begin{equation*}
    \frac{g\bigl(z,\varphi_2(z)\bigr)-g(z,t)}{\varphi_2(z)-t}\leq M
    \quad \text{and} \quad
    \frac{f\bigl(z,\varphi_2(z)\bigr)-f(z,t)}{\varphi_2(z)-t}\geq -M
    \end{equation*}
    for all $(z,t)\in \Omega'\times\mathbb{R}$ satisfying $\varphi_1(z)\leq t<\varphi_2(z)$;
    
    \item[(ii)] $\varphi_2$ is a strict supersolution of~\eqref{eq:main_problem}.
\end{itemize}

Then there exists a weak solution $u$ of~\eqref{eq:main_problem} such that
\[
\varphi_1 \leq u \leq \varphi_2
\quad \text{almost everywhere in } \Omega,
\]
and $u$ is a local minimizer of $I_{K_{\varphi_1}}$. Moreover, if condition \textnormal{(i)} holds, then $u < \varphi_2$ in $\om$. If condition \textnormal{(ii)} is satisfied and $\bar p < 2_{\gamma}^{\ast}-1$, then there exists $\rho_0>0$ such that condition~\eqref{eq:strict_local_minimum_estimate} is valid.
\end{theorem}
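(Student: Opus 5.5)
We first produce a minimizer of $I$ on the order interval $K_{\varphi_1}^{\varphi_2}$, then upgrade it to a weak solution and a local minimizer using the results above.

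\emph{Step 1: existence of a minimizer.} The set $K_{\varphi_1}^{\varphi_2}$ is convex and closed in $\spacegr$. It is nonempty as a domain, because the given $v$ satisfies $I_{K_{\varphi_1}^{\varphi_2}}(v)<\infty$.

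On this set $I$ is coercive. By (A1), $|F(z,u)|\le |F(z,0)|+a_1|u|+\frac{c_1}{p+1}|u|^{p+1}$. For $\varphi_1\le u\le\varphi_2$ we have $|u|\le|\varphi_1|+|\varphi_2|\in L^{p+1}(\Omega)$, so $\int F(z,u)$ is bounded uniformly on $K_{\varphi_1}^{\varphi_2}$. The lower bound (A2)(iii), together with the Sobolev inequality \eqref{ineq:Sobolev_embedding_grushin}, gives $\int G(z,u)\ge -C\|u\|-C$. Hence $I(u)\ge \frac12\|u\|^2-C\|u\|-C$.

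$I$ is also weakly lower semicontinuous on $K_{\varphi_1}^{\varphi_2}$. Along a subsequence, weak convergence gives a.e. convergence, so:
\begin{itemize}
\item the $G$ term is handled by Fatou's lemma, using lower semicontinuity of $G(z,\cdot)$ and the bound $-a_2|u|-a_3$;
\item the $F$ term converges by dominated convergence, with the $L^{p+1}$ domination above.
\end{itemize}
The direct method therefore gives a minimizer $u$ of $I_{K_{\varphi_1}^{\varphi_2}}$. It satisfies $0\in\partial^-I_{K_{\varphi_1}^{\varphi_2}}(u)$.

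\emph{Step 2: $u$ is a weak solution.} We apply Proposition \ref{prop:sub_super_solution_criterion}(iii). Its hypotheses hold: $\varphi_1\le\varphi_2$, $G(\cdot,\varphi_i)\in L^1_{loc}(\Omega)$, and $u\in D(I_{K_{\varphi_1}^{\varphi_2}})$.

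\emph{Step 3: local minimality, case (ii).} Theorem \ref{thm:local_minimizer_from_supersolution}(ii) applies directly. It shows that $u$ is a local minimizer of $I_{K_{\varphi_1}}$, and it yields \eqref{eq:strict_local_minimum_estimate} when $\bar p<2_\gamma^*-1$.

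\emph{Step 3: local minimality, case (i).} We aim to verify $u<\varphi_2$ a.e., and in fact everywhere. Theorem \ref{thm:local_minimizer_from_supersolution}(i) then gives local minimality.

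Set $w=\varphi_2-u\ge0$. Since $\varphi_2$ is a supersolution and $u$ a solution, on each $\Omega'\Subset\Omega$ we have, in the distributional sense,
\[-\Gr w+\bigl(g(z,\varphi_2)-g(z,u)\bigr)-\bigl(f(z,\varphi_2)-f(z,u)\bigr)\ge0.\]
On $\{u<\varphi_2\}$, hypothesis (i) gives $g(z,\varphi_2)-g(z,u)\le Mw$ and $f(z,\varphi_2)-f(z,u)\ge -Mw$. Hence $-\Gr w+2Mw\ge0$ in $\Omega'$, and this trivially holds on $\{w=0\}$.

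The Strong Maximum Principle, Lemma \ref{lem:SMP_grushin} with $h=2M$ (which lies in $L^{2Q/(Q+2)}_{loc}$), applied on connected subdomains, gives either $w>0$ or $w\equiv0$. We exclude $w\equiv0$ because $\varphi_2$ is assumed not to be a solution. This yields $u<\varphi_2$ in $\Omega$.

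\emph{Main obstacle.} The delicate point is the distributional inequality for $w$. The term $g(\cdot,\varphi_2)$ is only $L^1_{loc}$, and the difference quotient bound holds only on $\{\varphi_1\le t<\varphi_2\}$. We also need $w\in H^1_{\gamma}$ locally, which holds because $\varphi_2\in\spaceg$.

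We also have to justify that $w\equiv0$ on a single component is impossible. If $w\equiv0$ on one component, then $u=\varphi_2$ there. The SMP would have to be applied component-wise, so if $\Omega$ is not connected the plan needs to invoke the nonsolution hypothesis more carefully. Assuming $\Omega$ is a domain, hence connected, as the standing assumption says, this issue does not arise.
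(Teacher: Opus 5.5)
Your proposal is correct and follows the paper's proof. You use the direct method on $K_{\varphi_1}^{\varphi_2}$, then Proposition~\ref{prop:sub_super_solution_criterion}(iii) to get a weak solution, then the strong maximum principle (Lemma~\ref{lem:SMP_grushin}) for $u<\varphi_2$ and Theorem~\ref{thm:local_minimizer_from_supersolution} for local minimality. Your maximum-principle step is slightly cleaner than the paper's: you use only the one-sided bounds to get $-\Gr w+2Mw\ge 0$ with a constant potential, rather than the paper's quotient potential $h$, whose claimed local boundedness would also need a lower bound on the difference quotients. You also invoke the hypothesis that $\varphi_2$ is not a solution to exclude $w\equiv 0$, and you treat case (ii) explicitly; the paper leaves both implicit.
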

\begin{proof}
Since $G(\cdot, v) \in L^1(\om)$, we have $\inf I_{K_{\varphi_1}^{\varphi_2}}\!\left( K_{\varphi_1}^{\varphi_2} \right) < \infty.$
Let $\{u_k\} \subset K_{\varphi_1}^{\varphi_2}$ be the minimizing sequence, i.e., $I_{K_{\varphi_1}^{\varphi_2}}(u_k) \downarrow \inf I_{K_{\varphi_1}^{\varphi_2}}\!\left( K_{\varphi_1}^{\varphi_2} \right)$.
By \textnormal{(A1)} and \textnormal{(A2)}, the sequence $\{ \|u_k\| \}$ is bounded. Hence, up to a subsequence, we may assume that $u_k \rightharpoonup u$ in $\spacegr$ and $u_k \to u$ almost everywhere in $\Omega$. Moreover, as $K_{\varphi_1}^{\varphi_2}$ is weakly closed we have $u \in K_{\varphi_1}^{\varphi_2}$.

Since $\{u_k\} \subset K_{\varphi_1}^{\varphi_2}$, $\varphi_1, \varphi_2 \in L^{p+1}(\Omega)$, and~\eqref{eq:main_problem} holds, Lebesgue's dominated convergence theorem yields
\begin{equation*}
\int_{\Omega} f(z,u)\, dz = \lim_{n \to \infty} \int_{\Omega} F(x,u_k)\, dz .
\end{equation*}
Moreover, the mappings $v \mapsto \int_{\Omega} |\nabla v|^{2} \, dz$ and $v \mapsto \int_{\Omega} G(z,v)\, dz$ are weakly sequentially lower semicontinuous on $\spacegr$. Therefore,
\begin{equation*}
I_{K_{\varphi_1}^{\varphi_2}}(u) \le \liminf_{n \to \infty} I_{K_{\varphi_1}^{\varphi_2}}(u_k),
\end{equation*}
and hence $u \in K_{\varphi_1}^{\varphi_2}$ is a minimizer of $I_{K_{\varphi_1}^{\varphi_2}}$. In particular, $0 \in \partial^{-} I_{K_{\varphi_1}^{\varphi_2}}(u)$. By Proposition~2\,(iii), $u$ is a weak solution of~\eqref{eq:main_problem}.

Define the function $h : \Omega \to \mathbb{R}$ by
\begin{equation*}
h(z) =
\begin{cases}
\dfrac{\bigl( g(z,\varphi_2(z)) - g(z,u(z)) \bigr)
      + \bigl( f(z,\varphi_2(z)) - f(z,u(z)) \bigr)}
      {\varphi_2(z) - u(z)},
& \text{if } \varphi_2(z) > u(z), \\[1.2ex]
0, & \text{if } \varphi_2(z) = u(z),
\end{cases}
\end{equation*}
for every $z \in \Omega$. By assumption~\textnormal{(i)}, we have $h \in L_{loc}^{\infty}(\Omega)$. Since
\begin{equation*}
- \Gr(\varphi_2 - u) + h\,(\varphi_2 - u) \ge 0
\quad \text{in } \Omega
\end{equation*}
in the sense of distributions, the strong maximum principle in Lemma \ref{lem:SMP_grushin} implies $u < \varphi_2$ in $\om$.  Thus, under assumption~\textnormal{(i)}, we have verified condition~\textnormal{(i)} of Theorem \ref{thm:local_minimizer_from_supersolution}. Consequently, by Theorem \ref{thm:local_minimizer_from_supersolution}, the conclusion follows. \\
\end{proof}
\medskip
\subsubsection{Supercritical Problem}

We end this section by proving an existence result when the nonlinearity behaves like a supercritical term, i.e., $N  > \frac{Q+2}{Q-2}$. For this, we need a different set of assumptions given by

\textbf{(A1')} Let $f$ be a Carath\'{e}odary function such that for every $M>0$ there exists a nonnegative function $a_4 \in L^{2Q/(Q+2)}(\om)$ such that
\begin{equation*}
|f(z,u)| \leq a_4(z)
\end{equation*}
for a.e. $z \in \om$ and $u(z) \in \real $ with $|u(z)|\leq M$.

\textbf{(A3')} For every $M>0$ there exists a nonnegative function $a_5 \in L^{Q/2}(\om)$ such that 
\begin{equation*}
    \frac{f(z,u) - f(z,v)}{u-v} \leq a_5(z)
\end{equation*}
for a.e. $z \in \om$ and for every $u,v \in \real$ with $u\neq v$ and $|u|,|v| \leq M$. \\

Let $f(\cdot,t) = \frac{\partial}{\partial t} F(\cdot, t)$ as before. Since the function $F(\cdot, u)$ may not belong to $L^1(\om)$ for every $u \in \spacegr$, we need to redefine the functional associated with the problem \eqref{eq:main_problem}. Consider $I : \spacegr \to (-\infty , \infty ]$ defined as 
\begin{equation*}
I(u) =
\begin{cases}
\dfrac{1}{2} \displaystyle\int_{\Omega} |\gradgr u|^{2}\,dz
+ \displaystyle\int_{\Omega} G(z,u)\,dz
- \displaystyle\int_{\Omega} F(z,u)\,dz,
& \text{if } G(\cdot,u), F(\cdot,u) \in L^{1}(\Omega), \\[1.2ex]
\infty, & \text{otherwise}.
\end{cases}
\end{equation*}
Then we have the analogous version of Theorem \ref{thm:existence_between_sub_super} for the supercritical case is given by
\begin{theorem}
\label{thm:local_minimizer_from_supersolution_supercritical}
Assume (A1'), (A2), and (A3'). Let $\phi_1$ and $\phi_2$ be two functions satisfying the following assumptions
\begin{enumerate}
    \item[$(i)$] $\phi_1$ is a subsolution and $\phi_2$ is a supersolution of \eqref{eq:main_problem} respectively,
    \item[$(ii)$] $\phi_1, \phi_2 \in L^{\infty}(\om)$ with $\phi_1 \leq \phi_2$,
    \item[$(iii)$] $G(\cdot, \phi_1), G(\cdot, \phi_2) \in L_{loc}^1(\om)$,
    \item[$(iv)$] there exists $w \in \spacegr$ such that $\phi_1 \leq w \leq \phi_2$ and $G(\cdot, w) \in L^1(\om)$,
    \item[$(v)$] either $(i)$ or $(ii)$ of Theorem \ref{thm:existence_between_sub_super} hold.
\end{enumerate}
Then there exists a weak solution $u$ of \eqref{eq:main_problem} satisfying the following 
\begin{itemize}
    \item $\phi_1 \leq u \leq \phi_2$,
    \item for each $v \in \spacegr \cap L^{\infty}(\om)$ with $v \geq u$, $u$ minimizes the functional $I_{K_{\phi_1}^v}$,
    \item if $(i)$ of Theorem \ref{thm:existence_between_sub_super} holds, then $u < \phi_2$ in $\om$.
\end{itemize}
\end{theorem}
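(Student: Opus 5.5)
The plan is to reduce the supercritical case to the framework of Theorem \ref{thm:existence_between_sub_super} by truncating $f$ at the $L^\infty$ level of the order interval. Set $M := \max\{\|\phi_1\|_\infty,\|\phi_2\|_\infty\}$ and
\begin{equation*}
\tilde f(z,t) = f\bigl(z, \max\{-M,\min\{t,M\}\}\bigr), \qquad \tilde F(z,t)=\int_0^t \tilde f(z,s)\,ds + F(z,0).
\end{equation*}
By (A1') we have $|\tilde f(z,t)|\le a_4(z)$ with $a_4\in L^{2Q/(Q+2)}(\om)$, so $\tilde f$ satisfies (A1) with $p=1$ (the exponent $(p+1)/p=2$ needs care: since $\om$ is bounded and $2Q/(Q+2)<2$, I would instead use directly that $\int a_4|u|\le \|a_4\|_{2Q/(Q+2)}\|u\|_{2^*_\gamma}$, which is all the proofs actually use). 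By (A3') the difference quotient of $\tilde f$ is bounded by $a_5\in L^{Q/2}(\om)$, which is (A3) with $\bar p=2^*_\gamma-1$ and $c_2=0$, since $(\bar p+1)/(\bar p-1)=Q/2$. Because $\phi_1,\phi_2$ take values in $[-M,M]$, $\tilde f(z,\phi_i)=f(z,\phi_i)$, so $\phi_1,\phi_2$ remain sub/supersolution of the truncated problem, condition (i) or (ii) of Theorem \ref{thm:existence_between_sub_super} transfers verbatim, and $\phi_i\in L^\infty\subset L^{p+1}$.

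Next I apply Theorem \ref{thm:existence_between_sub_super} to $-\Gr u+g(z,u)=\tilde f(z,u)$, using the function $w$ from (iv). This gives a minimizer $u$ of $\tilde I_{K_{\phi_1}^{\phi_2}}$ which is a weak solution with $\phi_1\le u\le\phi_2$, and $u<\phi_2$ under (i). Since $|u|\le M$, we get $\tilde f(z,u)=f(z,u)$, so $u$ solves \eqref{eq:main_problem}. Moreover, $F(\cdot,u)\in L^1$ because $|F(z,t)-F(z,0)|\le a_4|t|$ for $|t|\le M$.

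The step I expect to be the main obstacle is the minimality statement: $u$ minimizes $I_{K_{\phi_1}^v}$ for every bounded $v\ge u$. I would fix such a $v$ and truncate at the larger level $M_v:=\max\{M,\|v\|_\infty\}$, obtaining $\hat f$ and $\hat I$. On $K_{\phi_1}^v$ all functions are bounded by $M_v$, so $\hat I=I$ there, and it suffices to show that $u$ minimizes $\hat I$ on $K_{\phi_1}^v$. For this I reuse the argument of \eqref{eq:IK_key_inequality}: for $\tilde v\in K_{\phi_1}^{v}$ with $\pi(\tilde v)=\min\{\tilde v,\phi_2\}$, one has $\hat I(\pi(\tilde v))\ge \hat I(u)$, because $\hat I=\tilde I$ on $K_{\phi_1}^{\phi_2}$. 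One also has $\hat I(\tilde v)-\hat I(\pi(\tilde v))\ge \frac12\|w\|^2+\mathcal L(w)-\frac12\int a_5 w^2$ with $w=(\tilde v-\phi_2)^+$.

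This only yields \emph{local} minimality near $\phi_2$. To get global minimality on the order interval, I would instead argue directly. Take a minimizer $\hat u$ of $\hat I$ on $K_{\phi_1}^{\max\{v,\phi_2\}}$; it exists by the same direct method, with bounded $f$. Then $\min\{\hat u,\phi_2\}$ and $\max\{\hat u,u\}$ are competitors, and the lattice identity $\hat I(\max)+\hat I(\min)=\hat I(\hat u)+\hat I(u)$ (valid since $I$ is a sum of local integrands) combined with the supersolution property of $\phi_2$ shows that $\min\{\hat u,\phi_2\}$ is also a minimizer. Hence the minimal value is attained in $K_{\phi_1}^{\phi_2}$, where it equals $\tilde I(u)$. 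Choosing $u$ as the minimizer in the first step, this gives $I(u)\le I(\tilde v)$ for all $\tilde v\in K_{\phi_1}^v$.

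The delicate point is justifying the supersolution inequality tested against $(\hat u-\phi_2)^+$. This needs $g(\cdot,\phi_2)$ to be only locally integrable; I would handle it with the approximation by compactly supported bounded functions given in Lemma \ref{lem:density_grushin_space}.
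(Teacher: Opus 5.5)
Your truncation setup is sound and matches the paper's: (A1) holds for the truncated $f$ with $p=2^*_\gamma-1$ and $c_1=0$, since $(p+1)/p=2Q/(Q+2)$, so you need no workaround there. (A3) holds with $\bar p=2^*_\gamma-1$, $c_2=0$ and $a_5\in L^{Q/2}(\om)$, and Theorem \ref{thm:existence_between_sub_super} then applies.

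\textbf{The gap is your last step.} There you try to prove that $u$ \emph{globally} minimizes $I$ on $K_{\phi_1}^{v}$ for every bounded $v\ge u$. That claim is false in general, so no argument can deliver it. Take the setting where the theorem is used: $\phi_1=0$, $g(z,s)=u_0^{-\delta}-(s+u_0)^{-\delta}$, $f(z,s)=\lambda(s+u_0)^p$. Choose $0\le\psi\in C_c^\infty(\om)$ with $\psi\not\equiv0$, and set $v=\max\{u,t\psi\}$. Since $0\le G(z,s)\le u_0^{-\delta}s$ for $s\ge0$ and $u_0$ is bounded below on $\operatorname{supp}\psi$, you get $I(t\psi)\le Ct^2-ct^{p+1}<I(u)$ for $t$ large, while $t\psi\in K_0^{v}$. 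So ``minimizes'' in the statement must mean ``is a local minimizer of''. That is what the paper proves and what Lemma \ref{lem:existence_critical_firstsoln} uses.

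Your lattice argument breaks at a specific point. The identity $\hat I(\max\{\hat u,u\})+\hat I(\min\{\hat u,u\})=\hat I(\hat u)+\hat I(u)$, together with minimality of $u$ on $K_{\phi_1}^{\phi_2}$, only shows that $\max\{\hat u,u\}$ is another minimizer on the large interval. It says nothing about $\min\{\hat u,\phi_2\}$. You also cannot pair $\hat u$ with $\phi_2$ directly, since $\phi_2\notin\spacegr$ in general. To replace $\hat u$ by $\pi(\hat u)=\min\{\hat u,\phi_2\}$ you would need $\hat I(\hat u)\ge\hat I(\pi(\hat u))$. But \eqref{eq:IK_key_inequality} only gives $\hat I(\hat u)-\hat I(\pi(\hat u))\ge\frac12\|w\|^2+\mathcal L(w)-\frac12\int_\om a_5w^2\,dz$ with $w=(\hat u-\phi_2)^+$. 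The supersolution property controls only the linear term, giving $\mathcal L(w)\ge0$. The superlinear remainder beats $\frac12\|w\|^2$ once $w$ is not small, which is exactly why only local minimality is available.

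\textbf{The fix is the step you set aside.} For fixed $v$, truncate $f$ at level $M_v$; the paper instead composes $f$ with $s\mapsto\max\{\phi_1,\min\{s,\max\{v,\phi_2\}\}\}$, which is equivalent. Then:
\begin{itemize}
\item $u$ minimizes $\hat I$ on $K_{\phi_1}^{\phi_2}$, because $\hat I$ agrees with your first truncated functional there.
\item $\phi_2$ remains a (strict) supersolution, and under condition (i) you already have $u<\phi_2$.
\item Theorem \ref{thm:local_minimizer_from_supersolution}, including its $\bar p=2^*_\gamma-1$ case under (ii), makes $u$ a local minimizer of $\hat I_{K_{\phi_1}}$ in the $\spacegr$-norm around $u$, not ``near $\phi_2$''.
\item Since $\hat I=I$ on $K_{\phi_1}^{v}\subset K_{\phi_1}$, $u$ is a local minimizer of $I_{K_{\phi_1}^{v}}$.
\end{itemize}
This is the paper's proof.
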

\begin{proof}
Since the functional $I$ is lower semi-continuous on $K_{\phi_1}^{\phi_2}$ with the virtue of the assumption that $\phi_1, \phi_2 \in L^{\infty}(\om)$, there exists a minimizer $u \in K_{\phi_1}^{\phi_2}$ of $I$ in the convex set $K_{\phi_1}^{\phi_2}$. \\
Now, let $v \in K_u \cap L^{\infty}(\om)$ and define $\tilde{f}(z,s) = f(z,\max \{ \phi_1, \min\{ s, \max \{ v, \phi_2\} \} \})$ for all $(z,s) \in \om \times \real$ with $\tilde{I}_K$ defined as in \eqref{eq:definition_of_IK} where $f$ is replaced with $\tilde{f}$. We show that all the assumptions of Theorem \ref{thm:local_minimizer_from_supersolution} are satisfied. Since 
\begin{equation*}
\max \{ \phi_1, \min\{ s, \max \{ v, \phi_2\} \} \} = s \quad 
\end{equation*}
for every $(z,s) \in \om \times \real$ with $\phi_1(z) \leq z \leq \max \{\phi_2(z), v(z)\}$.
It is clear that $u$ is a minimizer of $\tilde{I}_{K_{\phi_1}^{\phi_2}}$. Also, $\phi_2$ is a supersolution for \eqref{eq:main_problem} with $\tilde{f}$ replaced with $f$. Further, both the assumptions $(i)$ and $(ii)$ of Theorem \ref{thm:existence_between_sub_super} holds with $\tilde{f}$ replaced with $f$. Since all the assumptions of Theorem \ref{thm:local_minimizer_from_supersolution} are satisfied, $u$ is a local minimizer for $\tilde{I}_{K_{\phi_1}}$. This yields that $u$ is a local minimizer for $I_{K_{\phi_1}^v}$.
\end{proof}

\section{Purely Singular Problem}\label{sec:purely_singular}

In this section, we study the purely singular version of the problem \eqref{eq:main_problem}. In particular, we are interested in the following singular boundary value problem:
\begin{equation}\tag{$\mathcal{S}_0$}
\label{eq:singular_grushin}
\begin{cases}
-\Gr u_0 = u_0^{-\delta}, & \text{in } \Omega, \\
u_0 > 0, & \text{in } \Omega, \\
u_0 = 0, & \text{on } \partial \Omega.
\end{cases}
\end{equation}
We establish the existence of solutions to \eqref{eq:singular_grushin} together with their asymptotic estimates, as stated in the following result.

\begin{theorem}[Existence and uniqueness]
\label{thm:singular-problem}
There exists an unique function $u_0 \in C_{loc}^{0,\alpha}(\om)$ satisfying \eqref{eq:singular_grushin}. Moreover, if $u_1 \in \spacegr \cap C_{loc}^{0,\alpha}(\om) \cap L^{\infty}(\om)$ satisfies
\begin{equation}\tag{$\mathcal{S}'$}
\label{eq:one_grushin}
\begin{cases}
-\Gr u_1 = 1, & \text{in } \Omega, \\
u_1 = 0, & \text{on } \partial \Omega,
\end{cases}
\end{equation}
then 
\begin{equation*}
\|u_1\|_\infty^{-\frac{\delta}{1+\delta}}\, u_1
\le
u_0
\le
\bigl( (1+\delta) u_1 \bigr)^{\frac{1}{1+\delta}}
\qquad \text{in } \Omega .
\end{equation*}
\end{theorem}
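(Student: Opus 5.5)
Following \cite{canino_degiovanni_fully_singular}, the plan is to approximate the singular term, obtain monotone uniform bounds, and pass to the limit. Existence comes first, then the two-sided estimate, then uniqueness.

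\textbf{Approximate problems.} For $k\in\mathbb{N}$, consider
\[
-\Gr u_k=(u_k+1/k)^{-\delta}\ \text{in }\Omega,\qquad u_k=0\ \text{on }\partial\Omega.
\]
The right-hand side is bounded by $k^{\delta}$ and nonincreasing in $u$. Hence the associated functional
\[
\tfrac12\|u\|^2-\tfrac{1}{1-\delta}\int_\Omega (u^+ + 1/k)^{1-\delta}
\]
(with the obvious modification for $\delta=1$) is coercive and convex, and it has a unique minimizer $u_k\in\spacegr$. Testing with $u_k^-$ gives $u_k\ge0$. Lemma~\ref{lem:uniform_estimate_grushin}, applied with a bounded right-hand side, gives $u_k\in L^\infty(\Omega)$. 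Lemma~\ref{lem:SMP_grushin} gives $u_k>0$. The interior H\"older regularity of \cite[Theorem 5.5]{lanconelli_holder_Xelliptic} gives $u_k\in C^{0,\alpha}_{loc}(\Omega)$.

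Because $t\mapsto(t+1/k)^{-\delta}$ is increasing in $k$ and decreasing in $t$, a weak comparison argument shows $u_k\le u_{k+1}$. The argument tests the difference of the equations with $(u_k-u_{k+1})^+$ and uses monotonicity of the nonlinearity.

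\textbf{Barriers.} The two sides of the estimate are handled separately.
\begin{itemize}
\item \emph{Lower barrier.} Set $\underline u=\|u_1\|_\infty^{-\delta/(1+\delta)}u_1$. Then
\[
-\Gr\underline u=\|u_1\|_\infty^{-\delta/(1+\delta)}\le \underline u^{-\delta},
\]
since $\underline u\le \|u_1\|_\infty^{1/(1+\delta)}$.
\item \emph{Upper barrier.} Set $\overline u=((1+\delta)u_1)^{1/(1+\delta)}$. A chain-rule computation for the Grushin gradient gives
\[
-\Gr\overline u=\overline u^{-\delta}+\delta\,\overline u^{-1}|\gradgr\overline u|^2\ge\overline u^{-\delta}.
\]
This must be justified weakly near $\partial\Omega$, where $\overline u$ is singular in gradient. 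To do so, I would work with $((1+\delta)u_1+\varepsilon)^{1/(1+\delta)}$, which is a genuine $H^1_\gamma$ supersolution of the approximate problem for $k$ large, and then let $\varepsilon\to0$.
\end{itemize}

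Comparison then gives $u_k\le\overline u$ for all $k$. For the lower bound, $u_k\ge u_1$-type estimates hold for $k$ large via the monotone limit $u_0=\lim u_k$. More carefully, I would compare $\underline u$ with $u_0$ directly in the limit, using that $\underline u\in\spacegr$ is a subsolution with a bounded right-hand side.

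\textbf{Passing to the limit.} Since $u_k\ge u_1>0$ on compact sets, the right-hand sides $(u_k+1/k)^{-\delta}$ are locally uniformly bounded. The local $H^1_\gamma$ estimates (Caccioppoli) and the H\"older estimates of \cite{lanconelli_holder_Xelliptic} are therefore uniform on every $\Omega'\Subset\Omega$. Monotone convergence then yields $u_0\in C^{0,\alpha}_{loc}(\Omega)$ solving $-\Gr u_0=u_0^{-\delta}$ in the distributional sense. The boundary condition holds in the sense of Definition~\ref{def:boundary_def}: for $\varepsilon>0$ we have $(u_0-\varepsilon)^+\le(\overline u-\varepsilon)^+$, which is supported away from the degenerate boundary region. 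A uniform energy bound on $(u_k-\varepsilon)^+$, obtained by testing with it, shows that $(u_0-\varepsilon)^+\in\spacegr$. This is the point where $\Gamma$-convergence or the framework of \cite{canino_degiovanni_fully_singular} is needed when $\delta\ge3$, since then $u_0\notin\spacegr$ globally.

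\textbf{Uniqueness (main obstacle).} Let $u,v$ be two such solutions. I would test with $(u-v-\varepsilon)^+$, truncated at a level $T$. This test function lies in $\spacegr$ because $(u-\varepsilon)^+\in\spacegr$ and it is supported where $u>\varepsilon$, where $v>0$ is locally bounded below. The sign of $u^{-\delta}-v^{-\delta}$ on $\{u>v\}$ gives $\|(u-v-\varepsilon)^+\|\le0$. Letting $\varepsilon\to0$ gives $u\le v$, and symmetry gives equality. The hard part is justifying that the distributional equation may be tested with these functions, which are not compactly supported, and that the pairing with the singular term makes sense. This is where Lemma~\ref{lem:density_grushin_space} (approximation by compactly supported bounded functions) and Fatou's lemma on the nonnegative singular integrand are used.
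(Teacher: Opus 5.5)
Your existence argument is essentially sound and close to the paper's. You regularize by a shift rather than the paper's truncation $\min\{s^{-\delta},k\}$, and you minimize freely and then compare rather than minimizing on $K_\varphi^\psi$. Both routes use the same monotone limit and the same energy bound on $(u_k-\varepsilon)^+$; that bound already yields the boundary condition, so no $\Gamma$-convergence is needed there.

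The genuine gap is the uniqueness step. You claim $(u-v-\varepsilon)^+\in\spacegr$ because its support lies in $\{u>\varepsilon\}$, where $v$ is locally bounded below. That only helps if $\{u>\varepsilon\}\Subset\Omega$, and nothing gives this for an arbitrary solution. The only boundary information is $(u-\varepsilon)^+\in\spacegr$ (Definition~\ref{def:boundary_def}). A barrier such as $u\le\overline u$ for an arbitrary solution is exactly what a comparison principle is meant to produce, so it cannot be assumed. Consequently $\gradgr v$ is known to be square integrable only on compact sets and on superlevel sets $\{v>\eta\}$, while the support of $(u-v-\varepsilon)^+$ may meet $\{v<\eta\}$ for every $\eta>0$. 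Since, as you note yourself, solutions need not lie in $\spacegr$ for large $\delta$, the pairing $\int_\Omega\gradgr v\cdot\gradgr(u-v-\varepsilon)^+\,dz$ need not even be defined. Lemma~\ref{lem:density_grushin_space} and Fatou's lemma control the zero-order singular term, not this gradient term. The same objection applies to your plan to compare $\underline u$ with $u_0$ directly in the limit.

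The missing idea is the one in Lemma~\ref{lem:comparison}: never test the supersolution $v$ with non-compactly-supported functions, but insert a genuine $\spacegr$ function below it. Minimize $\frac12\|w\|^2+\int_\Omega\Phi_k(w)\,dz$, with $-\Phi_k'(s)=\min\{s^{-\delta},k\}$, over $\{w\in\spacegr:0\le w\le v\}$. Because $v$ is a supersolution, only the compactly supported competitors $\min\{w_k+t\phi,v\}$ are needed, as in Lemma~\ref{lem:variational-identity}, to obtain $-\Gr w_k\ge\min\{w_k^{-\delta},k\}$. Now $(u-w_k-\varepsilon)^+\in\spacegr$, because $\gradgr w_k\in L^2(\Omega)$ and $\gradgr u\in L^2(\{u>\varepsilon\})$. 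On its support $u^{-\delta}<\varepsilon^{-\delta}$, so choosing $k>\varepsilon^{-\delta}$ gives $\|(u-w_k-\varepsilon)^+\|^2\le 0$. Hence $u\le w_k+\varepsilon\le v+\varepsilon$, and symmetry gives $u=v$.

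For the lower bound there is also a cheaper repair inside your own scheme. $\underline u$ is not a subsolution of the shifted problem near its maximum, but $c_ku_1$ with $c_k=(c_k\|u_1\|_\infty+1/k)^{-\delta}$ is. Both $c_ku_1$ and $u_k$ lie in $\spacegr$, so $u_k\ge c_ku_1$, and $c_k\uparrow\|u_1\|_\infty^{-\delta/(1+\delta)}$ as $k\to\infty$.

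A minor point: your displayed approximate functional built from $u^+$ is not convex, since its derivative jumps down at $0$. Use instead the primitive of the nonincreasing extension of the nonlinearity, equal to $k^{\delta}$ for $u\le 0$.
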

\begin{remark}
\begin{enumerate}
    \item The existence of $u_1 \in \spacegr$ in the above theorem follows from the Lax-Milgram theorem. Also, $u_1 \in C_{loc}^{0,\alpha}(\om)$ from \cite[Theorem $5.5$]{lanconelli_holder_Xelliptic} and $u_1 \in L^{\infty}(\om)$ from Lemma \ref{lem:uniform_estimate_grushin}.
    \item If $u_1 \in C_{loc}^{0,\alpha}(\om)$, then from the inequality $(a+b)^q - a^q \leq b^q$ for $0 < q < 1$ and $a,b >0$, we have $u_1^{\frac{1}{1+\delta}} \in C_{loc}^{0,\frac{\alpha}{1 + \delta}}(\om)$.
\end{enumerate}
\end{remark}
To establish the above theorem, we first require two auxiliary lemmas. These results are primarily based on Perron's method, namely, the existence of a solution lying between a subsolution and a supersolution, together with a comparison principle. Since both lemmas can be proved in a more general framework, we consider the equation
\begin{equation}
\label{eq:main_eq_singular}
- \Delta_{\gamma} u = g(z,u) + w \qquad \text{in } \Omega,
\end{equation}
where $g : \Omega \times \mathbb{R} \to \mathbb{R}$ is a Carath\'{e}odory function and $w \in H^{-1,2}_{\gamma}(\Omega)$.

The notions of subsolution and supersolution for \eqref{eq:main_eq_singular} are defined analogously to those in \eqref{def:weak_sub_super_solution_grushin}. It is worth noting that the class of test functions appearing in the definition of weak subsolutions and supersolutions may be enlarged from $\phi \in C_c^{\infty}(\Omega)$ to $\phi \in \spacegr \cap L^\infty_{loc}(\Omega)$.

Since our interest lies in the collection of all functions bounded between a subsolution and a supersolution, which forms a convex set, we are naturally led to the following inequality:

\begin{definition}
Let $w \in H^{-1,2}_{\gamma}(\Omega)$ and $\varphi \in H^{1}_{\gamma, loc}(\Omega)$. We consider the variational inequality
\begin{equation}
\label{eq:variational-inequality}
\int_\Omega
\gradgr u \cdot \gradgr (v-u)\,dz
\ge
\int_\Omega
u^{-\delta}(v-u)\,dz
+
\langle w, v-u \rangle,
\qquad
\text{ for all }\, v \ge 0 .
\end{equation}
\textbf{Local subsolution(supersolution).}
We say that $\varphi$ is a \emph{local subsolution} of \eqref{eq:variational-inequality} if the following conditions are satisfied:
\begin{enumerate}
\item $\varphi > 0$ a.e.\ in $\Omega$ and $\varphi^{-\delta} \in L^1_{loc}(\Omega)$,
\item for every $v \in \spacegr \cap L^\infty_{loc}(\Omega)$ and $0 \le v \le \varphi$ a.e. in $\om$, the following inequality holds
\begin{equation*}
\int_\Omega
\gradgr \varphi \cdot \gradgr v \, dz
\le
\int_\Omega
\varphi^{-\delta} v \, dz
+
\langle w , v \rangle.
\end{equation*}
\end{enumerate}
For supersolutions, the above inequality is reversed.
\end{definition}

Now, we are ready to state and prove the auxilliary lemmas. 
\begin{lemma}
\label{lem:variational-identity}
Let $g : \Omega \times \mathbb{R} \to \mathbb{R}$ be a Carath\'{e}odory function such that for all $S > 0$,
\begin{equation*}
\sup_{|s|\le S} |g(\cdot,s)| \in L^1_{loc}(\Omega).
\end{equation*}
Let $w \in H^{-1,2}_{\gamma}(\Omega)$, and $\varphi,\, u,\, \psi \in  H^{1}_{\gamma, loc}(\Omega)$. Assume that
\begin{itemize}
\item $\varphi$ is a subsolution of equation \eqref{eq:main_eq_singular};
\item $\psi$ is a supersolution of equation \eqref{eq:main_eq_singular};
\item$\varphi \le u \le \psi$ a.e. in $\om$;
\item$g(z,u) \in L^1_{loc}(\Omega)$;
\item for every $v \in u + \bigl( \spacegr \cap L^\infty_{loc}(\Omega) \bigr)$ with $\varphi \le v \le \psi$ a.e. in $\om$, the following holds
\begin{equation}\label{ineq:min_on_convex}
\int_\Omega
\gradgr u \cdot \gradgr (v-u)\,dz
\ge
\int_\Omega
g(z,u)(v-u)\,dz
+
\langle w , v-u \rangle.
\end{equation}
\end{itemize}
Then $u$ satisfies
\begin{equation*}
- \Delta_{\gamma} u = g(z,u) + w
\end{equation*}
in the sense of distributions. 
\end{lemma}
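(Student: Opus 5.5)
We follow the truncation argument of Proposition~\ref{prop:sub_super_solution_criterion}(iii), now with the local notions of sub- and supersolution. Fix $\phi \in C_c^\infty(\Omega)$ and $t \in (0,1]$, and set
\begin{equation*}
v_t = \min\{\max\{u+t\phi,\varphi\},\psi\}, \qquad \zeta_t = (\varphi - u - t\phi)^+, \qquad w_t = (u+t\phi-\psi)^+ .
\end{equation*}
Since $\varphi \le u \le \psi$, we get $v_t - u = t\phi + \zeta_t - w_t$. Moreover $0 \le \zeta_t \le t\phi^-$ and $0 \le w_t \le t\phi^+$. Hence $\zeta_t, w_t$ vanish outside $\operatorname{supp}\phi$ and are bounded there, so $\zeta_t, w_t \in \spacegr \cap L^\infty_{loc}(\Omega)$ (using $u,\varphi,\psi \in H^1_{\gamma,loc}$ and compact support). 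Consequently $v_t \in u + (\spacegr\cap L^\infty_{loc}(\Omega))$ with $\varphi \le v_t \le \psi$, and \eqref{ineq:min_on_convex} gives
\begin{equation*}
0 \le t\Bigl(\int_\Omega \gradgr u\cdot\gradgr\phi - g(z,u)\phi\,dz - \langle w,\phi\rangle\Bigr) + E(\zeta_t) - E(w_t),
\end{equation*}
where $E(\eta) := \int_\Omega \gradgr u\cdot \gradgr \eta - g(z,u)\eta\,dz - \langle w,\eta\rangle$. All terms are finite because $g(\cdot,u)\in L^1_{loc}$ and $\zeta_t,w_t$ are bounded with compact support.

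Next we use that $\varphi$ is a subsolution, tested with $\zeta_t \ge 0$, which is an admissible test function by the remark that test functions may be taken in $\spacegr\cap L^\infty_{loc}$. This gives $\int \gradgr\varphi\cdot\gradgr\zeta_t - g(z,\varphi)\zeta_t \le \langle w,\zeta_t\rangle$. On $\{\zeta_t>0\}$ we have $\gradgr\zeta_t = \gradgr\varphi - \gradgr u - t\gradgr\phi$, so
\begin{equation*}
E(\zeta_t) \le -\int_\Omega |\gradgr \zeta_t|^2 - t\int_\Omega \gradgr\phi\cdot\gradgr\zeta_t + \int_\Omega \bigl(g(z,\varphi)-g(z,u)\bigr)\zeta_t .
\end{equation*}
Symmetrically, the supersolution property of $\psi$ tested with $w_t$ gives
\begin{equation*}
-E(w_t) \le -\int|\gradgr w_t|^2 + t\int\gradgr\phi\cdot\gradgr w_t + \int (g(z,u)-g(z,\psi))w_t .
\end{equation*}
We drop the nonpositive squared-gradient terms and divide by $t$.

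The key step is to show that the remainders divided by $t$ tend to $0$. First, $\int \gradgr\phi\cdot\gradgr\zeta_t \to 0$, since $\gradgr\zeta_t = \chi_{\{\zeta_t>0\}}(\ldots)$ and the sets $\{\varphi > u+t\phi\}\cap\operatorname{supp}\phi$ shrink, as $t\to0$, to a subset of $\{\varphi>u\}\cap\{\varphi \ge u\}$. On $\{\varphi=u\}$ we have $\gradgr(\varphi-u)=0$ a.e., so the contribution of $\gradgr(\varphi-u)$ vanishes, and dominated convergence handles the $t\gradgr\phi$ term. Second, $\zeta_t/t \le |\phi|$, $\zeta_t/t \to 0$ a.e. where $\varphi<u$, and $g(z,\varphi)-g(z,u)=0$ where $\varphi=u$. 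The function $g(\cdot,\varphi)$ lies in $L^1_{loc}$: we are in the subsolution setting, and $u \in L^\infty_{loc}$ is not given, but condition (iii) of the definition provides this. Dominated convergence therefore makes this term $o(t)$. The $w_t$ terms are treated in the same way.

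Letting $t\to0^+$ gives $\int \gradgr u\cdot\gradgr\phi \ge \int g(z,u)\phi + \langle w,\phi\rangle$ for every $\phi$. Replacing $\phi$ by $-\phi$ yields equality, which is the distributional equation. The main obstacle is the justification of admissibility: checking that $\zeta_t, w_t \in \spacegr\cap L^\infty_{loc}$ when $\varphi,\psi$ are only locally $H^1_\gamma$. Here the compact support inside $\operatorname{supp}\phi$ and the bounds $\zeta_t\le t|\phi|$, $w_t \le t|\phi|$ are essential.
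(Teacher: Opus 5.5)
Your proof is correct, but it is not the route the paper takes for this lemma. It is essentially the argument of Proposition~\ref{prop:sub_super_solution_criterion}(iii) carried over to this setting.

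\textbf{The paper's route.} The paper follows Canino--Degiovanni. For $v\ge 0$ it cuts off in $u$, setting $v_k=\eta(u/k)v$, and uses the one-sided competitor $v_{k,t}=\min\{u+tv_k,\psi\}$. It then rewrites the variational inequality Minty-style, in terms of $\gradgr v_{k,t}$ and $g(z,v_{k,t})$. Testing the supersolution with $(u+tv_k-\psi)^+$ and discarding $\frac{1}{t}\int|\gradgr(v_{k,t}-u)|^2\ge 0$ leaves the error term $-\int|g(z,v_{k,t})-g(z,u)|\,|v_k|$. Letting $t\to 0^+$ therefore requires dominating $g$ at the intermediate values $v_{k,t}$. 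This is exactly where two ingredients enter:
\begin{itemize}
\item the hypothesis $\sup_{|s|\le S}|g(\cdot,s)|\in L^1_{loc}(\Omega)$;
\item the cut-off, which keeps $|v_{k,t}|\le 2k+\|v\|_\infty$ on $\operatorname{supp}v_k$, since $u$ is not locally bounded.
\end{itemize}
A second limit $k\to\infty$ follows, and the case $v\le 0$ is handled separately with $\max\{u+tv_k,\varphi\}$.

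\textbf{What your route buys.} In your two-sided projection, $g$ is only ever evaluated at $\varphi$, $u$, $\psi$. The remainder $(g(z,\varphi)-g(z,u))\zeta_t/t$ tends to $0$ pointwise for a trivial reason: either $\varphi=u$, or $\zeta_t=0$ for small $t$. It is dominated by $|g(z,\varphi)-g(z,u)|\,|\phi|$. So you need none of the growth hypothesis, continuity of $g$ in $s$, the cut-off, or the second limit. You only need $g(\cdot,\varphi),g(\cdot,\psi)\in L^1_{loc}(\Omega)$. This is part of the definition of sub/supersolution, and the paper's proof also uses it when testing with $(u+tv_k-\psi)^+$. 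Your version is therefore shorter and proves a slightly stronger statement.

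\textbf{Where the paper's route is cheaper.} It only needs weak convergence $v_{k,t}\to u$, so it never has to show that the gradient remainders vanish. You obtain $\gradgr\zeta_t\to 0$ from the level-set property on $\{\varphi=u\}$. Note that the limiting set is $\{\varphi=u\}\cap\{\phi<0\}$, not ``$\{\varphi>u\}\cap\{\varphi\ge u\}$'' as you wrote. You could avoid this step entirely by not discarding the squared-gradient terms, since
\[
-\frac{1}{t}\int_\Omega|\gradgr\zeta_t|^2-\int_\Omega\gradgr\phi\cdot\gradgr\zeta_t\le\frac{t}{4}\int_\Omega|\gradgr\phi|^2,
\]
and likewise for $w_t$.
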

\begin{proof}
Let $\eta \in C_c^\infty(\mathbb{R})$ be such that
\begin{equation*}
0 \le \eta \le 1 \quad \text{in } \mathbb{R}, \qquad
\eta = 1 \ \text{on } [-1,1], \qquad
Supp(\eta) \subset (-2,2).
\end{equation*}
Let $v \in C_c^\infty(\Omega)$ with $v \ge 0$. Fix $k \ge 1$ and $t > 0$, and set
\begin{equation*}
v_k = \eta\!\left( \frac{u}{k} \right) v,
\qquad
v_{k,t} = \min \{ u + t v_k , \psi \}.
\end{equation*}
Since $u \le v_{k,t} \le \psi$, using $v_{k,t}$ as the test function in \eqref{ineq:min_on_convex}, we obtain
\begin{align*}
&\int_\Omega
\Big(
|\gradgr (v_{k,t}-u)|^2
-
\big( g(z,v_{k,t}) - g(z,u) \big)
(v_{k,t}-u)
\Big)\,dz
 \\ &
\le
\int_\Omega
\gradgr v_{k,t} \cdot \gradgr (v_{k,t}-u)
-
g(z,v_{k,t})(v_{k,t}-u)\,dz
-
\langle w , v_{k,t}-u \rangle
\\ & =
\int_\Omega
\gradgr v_{k,t} \cdot \gradgr (v_{k,t}-u-t v_k)
-
g(z,v_{k,t})(v_{k,t}-u-t v_k)\,dz
-
\langle w , v_{k,t}-u-t v_k \rangle
\\
&\quad
+ t \int_\Omega
\big(
\gradgr v_{k,t} \cdot \gradgr v_k
-
g(z,v_{k,t}) v_k
\big)\,dz
-
t \langle w , v_k \rangle \\
& =
\int_\Omega
\gradgr \psi \cdot \gradgr (v_{k,t}-u-t v_k)
-
g(z,\psi)(v_{k,t}-u-t v_k)\,dz
-
\langle w , v_{k,t}-u-t v_k \rangle
\\
&+ t \int_\Omega
\big(
\gradgr v_{k,t} \cdot \gradgr v_k
-
g(z,v_{k,t}) v_k
\big)\,dz
-
t \langle w , v_k \rangle 
\\& \leq 
t \int_\Omega
\big(
\gradgr v_{k,t} \cdot \gradgr v_k
-
g(z,v_{k,t}) v_k
\big)\,dz
-
t \langle w , v_k \rangle.
\end{align*}
where the last inequality follows from the fact that $\psi$ is a supersolution of equation \eqref{eq:main_eq_singular}.

Thus, the relation $v_{k,t} - u \le t v_k \le t v$ yields
\begin{equation*}
\int_\Omega
\big(
\gradgr v_{k,t} \cdot \gradgr v_k
-
g(z,v_{k,t}) v_k
\big)\,dz
-
\langle w , v_k \rangle
\ge
-
\int_\Omega
\big| g(z,v_{k,t}) - g(z,u) \big|\, |v_k|\, dz .
\end{equation*}

Since
\begin{equation*}
|g(z,v_{k,t})|\,|v_k|
\le
\left(
\sup_{|s|\le 2k + \|v\|_\infty}
|g(z,s)|
\right)
|v|,
\end{equation*}
we may pass to the limit as $t \to 0^{+}$ by the Dominated Convergence Theorem. Consequently, we obtain
\begin{equation*}
\int_\Omega
\big(
\gradgr u \cdot \gradgr v_k
-
g(z,u) v_k
\big)\,dz
-
\langle w , v_k \rangle
\ge 0 .
\end{equation*}
Letting $k \to \infty$, it follows that
\begin{equation}
\label{eq:ineq-ten}
\int_\Omega
\big(
\gradgr u \cdot \gradgr v
-
g(z,u) v
\big)\,dz
-
\langle w , v \rangle
\ge 0 ,
\end{equation}
for every $v \in C_c^\infty(\Omega)$ with $v \ge 0$. Now let $v \in C_c^\infty(\Omega)$ with $v \le 0$. For $k \ge 1$ and $t > 0$, define
\begin{equation*}
v_k = \eta\!\left( \frac{u}{k} \right) v,
\qquad
v_{k,t} = \max \{ u + t v_k , \varphi \}.
\end{equation*}
Repeating the previous argument, we deduce that inequality
\eqref{eq:ineq-ten} holds for every $v \in C_c^\infty(\Omega)$.

Finally, replacing $v$ by $-v$ in \eqref{eq:ineq-ten}, we conclude the proof.
\end{proof}

\begin{lemma}
\label{lem:comparison}
Let $w \in H^{-1}_{\gamma}(\Omega)$ and $\varphi,\, \psi \in H^{1}_{\gamma, loc}(\Omega)$. Assume that $\varphi$ is a subsolution of \eqref{eq:variational-inequality} such that $\varphi \le 0$ on $\partial\Omega$ and that $\psi$ is a supersolution of \eqref{eq:variational-inequality}. Then $\varphi \le \psi$ a.e. in $\om$.
\end{lemma}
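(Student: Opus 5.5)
We prove that $(\varphi-\psi)^+=0$ by testing the sub- and supersolution inequalities against a truncated, bounded version of $(\varphi-\psi)^+$ and subtracting. This is the Canino--Degiovanni comparison argument adapted to $\gradgr$. The singular term $s\mapsto s^{-\delta}$ is decreasing, so its contribution has a good sign after subtraction. The two inequalities can only be tested against functions that are compactly supported in the sense of Definition~\ref{def:boundary_def}, bounded, and dominated by $\varphi$ (for the subsolution) or nonnegative (for the supersolution). This is why the truncation is needed.

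Fix $\varepsilon>0$. Since $\varphi\le 0$ on $\partial\Omega$, we have $(\varphi-\varepsilon)^+\in\spacegr$. By Lemma~\ref{lem:density_grushin_space} we choose an increasing sequence $z_k\in\spacegr\cap L^\infty(\Omega)$ with compact support such that $z_k\to(\varphi-\varepsilon)^+$ in $\spacegr$. For $\tau>0$ we consider
\[
v_{k,\tau}:=T_\tau\bigl((\min\{z_k,\varphi\}-\psi)^+\bigr),\qquad T_\tau(s)=\min\{s,\tau\}.
\]
This function lies in $\spacegr\cap L^\infty_{loc}(\Omega)$, has compact support, and satisfies $0\le v_{k,\tau}\le\varphi$ because $\psi>0$. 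We use it as a test function in the subsolution inequality for $\varphi$. For the supersolution inequality of $\psi$ we test with the same $v_{k,\tau}$. This requires first noting that the supersolution condition extends to all nonnegative compactly supported bounded test functions, by an approximation as in Lemma~\ref{lem:variational-identity}.

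Subtracting the two inequalities, the $w$-terms cancel and we get
\[
\int_\Omega \gradgr(\varphi-\psi)\cdot\gradgr v_{k,\tau}\,dz\le\int_\Omega(\varphi^{-\delta}-\psi^{-\delta})\,v_{k,\tau}\,dz .
\]
On the set $\{v_{k,\tau}>0\}$ we have $\varphi>\psi$, so the right-hand side is $\le 0$. We then let $k\to\infty$. Since $\varphi^{-\delta},\psi^{-\delta}\in L^1_{loc}$ and the supports stay in a compact set for each fixed $k$, we pass to the limit on the left-hand side using the monotonicity of $z_k$ together with Fatou or dominated convergence on the right. This yields
\[
\int_{\{0<\varphi-\varepsilon-\psi<\tau\}}|\gradgr(\varphi-\varepsilon-\psi)|^2\,dz\le 0 .
\]
More precisely, we apply the argument to the function $T_\tau\bigl((\min\{(\varphi-\varepsilon)^+,\varphi\}-\psi)^+\bigr)$, handling the shift by $\varepsilon$. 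Letting $\tau\to\infty$ gives $\gradgr(\varphi-\varepsilon-\psi)^+=0$. Because this function lies in $\spacegr$, it vanishes by the Sobolev inequality~\eqref{ineq:Sobolev_embedding_grushin}. Finally, letting $\varepsilon\to0$ gives $\varphi\le\psi$ a.e.

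The main obstacle is the limit $k\to\infty$, since $\psi$ is only in $H^1_{\gamma,loc}$. The term $\int\gradgr\psi\cdot\gradgr v_{k,\tau}$ must be controlled near $\partial\Omega$ without a global bound on $\gradgr\psi$. We would handle this by noting that $v_{k,\tau}\le(\varphi-\varepsilon-\psi)^+\wedge\tau$ is supported where $\psi<\varphi-\varepsilon$. On that set we have $\gradgr\psi$ controlled by a Caccioppoli-type estimate, or, more simply, we rewrite the left-hand side as $\int\gradgr(\varphi-\psi)\cdot\gradgr v$ with $v$ a truncation of a function in $\spacegr$. We would then use the monotone convergence of the supports, as in Canino--Degiovanni~\cite{canino_degiovanni_fully_singular}, so that only $\gradgr\varphi$ and $\gradgr(\varphi-\psi)^+$ on $\{\varphi>\psi\}$ appear. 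These are globally $L^2$ once we know $(\varphi-\varepsilon-\psi)^+\in\spacegr$, and this membership follows from $0\le(\varphi-\varepsilon-\psi)^+\le(\varphi-\varepsilon)^+$ combined with local $H^1_\gamma$ regularity.
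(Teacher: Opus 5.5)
Your direct strategy is fine up to the subtracted inequality. Testing both inequalities with a compactly supported truncation of $(\varphi-\varepsilon-\psi)^+$ and subtracting gives
\[
\int_{E_k}\nabla_\gamma(\varphi-\psi)\cdot\nabla_\gamma(z_k-\psi)\,dz\le 0,\qquad E_k=\{z_k>\psi\}.
\]
Here $z_k\le(\varphi-\varepsilon)^+<\varphi$, so $\min\{z_k,\varphi\}=z_k$, and since $z_k$ is bounded the cut $T_\tau$ is superfluous.

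The gap is the passage $k\to\infty$, which is exactly the step you flag, and your resolution of it does not work.
\begin{itemize}
\item The integrand is not sign-definite, so Fatou gives nothing.
\item Dominated convergence would need $\chi_{\{\varphi-\varepsilon>\psi\}}\nabla_\gamma\psi\in L^2(\Omega)$. That is essentially $(\varphi-\varepsilon-\psi)^+\in H^1_{0,\gamma}(\Omega)$, which is the very thing at stake.
\item Your argument for that membership is false. A bound $0\le f\le g$ with $g\in H^1_{0,\gamma}(\Omega)$ and $f\in H^1_{\gamma,loc}(\Omega)$ controls values, not gradients, near $\partial\Omega$. Already in one dimension, $x(1-x)\sin^2(x^{-2})$ lies below $x(1-x)$ but its derivative is not square integrable at $0$. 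Moreover, $\psi$ carries no boundary information at all.
\end{itemize}
The final appeal to the Sobolev inequality therefore rests on an unproved membership.

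The paper never compares $\varphi$ with $\psi$ directly. For $k>\varepsilon^{-\delta}$ it takes a minimizer $u_k$ of the truncated functional $F_{w,k}$ on $K_0^\psi$. This $u_k$ lies in $H^1_{0,\gamma}(\Omega)$, satisfies $u_k\le\psi$, and by the supersolution property of $\psi$ is itself a supersolution of the truncated problem. Then $(\varphi-u_k-\varepsilon)^+=((\varphi-\varepsilon)^+-u_k)^+$ belongs to $H^1_{0,\gamma}(\Omega)$ automatically. Monotonicity of the truncated nonlinearity on $\{\varphi>\varepsilon\}$ then gives $\varphi\le u_k+\varepsilon\le\psi+\varepsilon$.

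Your route can also be closed, but by absorption rather than by a limit:
\begin{itemize}
\item Since $E_k\subset\{\varphi>\varepsilon\}$, you have $\nabla_\gamma\varphi=\nabla_\gamma(\varphi-\varepsilon)^+$ a.e. on $E_k$.
\item So split $\nabla_\gamma(\varphi-\psi)=\nabla_\gamma(z_k-\psi)+\nabla_\gamma\bigl((\varphi-\varepsilon)^+-z_k\bigr)$ there and apply Cauchy--Schwarz. This yields
\[
\|(z_k-\psi)^+\|\le\|z_k-(\varphi-\varepsilon)^+\|\longrightarrow 0 .
\]
\item Since $(z_k-\psi)^+\to((\varphi-\varepsilon)^+-\psi)^+$ a.e., this forces $\varphi\le\psi+\varepsilon$ a.e.
\end{itemize}
This uses no global information on $\nabla_\gamma\psi$. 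It would be a simpler argument than the paper's, which needs the auxiliary minimization and the truncation $\Phi_k$. As written, though, your proposal does not contain it.
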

\begin{proof}
Let $\varepsilon > 0$ and choose $k > \varepsilon^{-\delta}$. Let $\widehat{\Phi}_k : \mathbb{R} \to \mathbb{R}$ be a primitive of the function
\begin{equation*}
\phi_k(s) =
\begin{cases}
\max\{ -s^{-\delta}, -k \}, & \text{if } s > 0, \\[1ex]
-k, & \text{if } s \le 0 
\end{cases}
\end{equation*}
with $\widehat{\Phi}_k(1) = 0$. Define the functional $\widehat{F}_{0,k} : L^2(\Omega) \to (-\infty,+\infty]$ by
\begin{equation*}
\widehat{F}_{0,k}(u)
=
\begin{cases}
\dfrac12
\displaystyle
\int_\Omega |\gradgr u|^2\,dz
+
\int_\Omega \widehat{\Phi}_k(u)\,dz,
& \text{if } u \in \spacegr, \\[3ex]
+\infty,
& \text{if } u \in L^2(\Omega)\setminus \spacegr.
\end{cases}
\end{equation*}
We normalise the definition of $\widehat{F}_{0,k}$ by defining $F_{0,k} : L^2(\Omega) \to (-\infty,+\infty]$ as
\begin{equation*}
F_{0,k}(u)
=
\widehat{F}_{0,k}(u)
-
\min_{L^2(\Omega)} \widehat{F}_{0,k}
=
\widehat{F}_{0,k}(u)
-
\widehat{F}_{0,k}(u_{0,k}),
\end{equation*}
where $u_{0,k} \in \spacegr$ denotes a minimizer of $\widehat{F}_{0,k}$.

We define the functional $F_{w,k} : L^2(\Omega) \to (-\infty,+\infty]$ by
\begin{equation*}
F_{w,k}(u)
=
\begin{cases}
F_{0,k}(u) - \langle w , u - u_{0,k} \rangle,
& \text{if } u \in \spacegr, \\[1ex]
+\infty,
& \text{if } u \in L^2(\Omega)\setminus \spacegr.
\end{cases}
\end{equation*}
Now, let $u_k$ be a minimizer of the functional $F_{w,k}$ on the convex set $K_{0}^{\psi} = \bigl\{ u \in \spacegr : 0 \le u \le \psi \ \text{a.e.\ in } \Omega \bigr\}.$

Then by the definition of minimum, we necessarily have
\begin{equation*}
\langle F_{w,k}'(u_k), v - u_k \rangle \ge 0
\qquad
\text{ for all }\, v \in K .
\end{equation*}

That is
\begin{equation*}
\int_\Omega
\gradgr u_k \cdot \gradgr (v-u_k)\,dz
\ge
-
\int_\Omega
\widehat{\Phi}_k'(u_k)(v-u_k)\,dz
+
\langle w , v-u_k \rangle,
\qquad
\text{ for all }\, v \in K .
\end{equation*}

Let $v \in C_c^\infty(\Omega)$ with $v \ge 0$ and let $t > 0$. We consider the test function $v_t = \min\{ u_k + t v , \psi \}$. Since $\psi$ is a supersolution of $-\Delta_{\gamma} u = -\widehat{\Phi}_k'(u) + w$. Arguing as in the previous lemma, we deduce
\begin{equation}
\label{eq:ineq-eleven}
\int_\Omega
\gradgr u_k \cdot \gradgr v\,dz
\ge
-
\int_\Omega
\widehat{\Phi}_k'(u_k) v\,dz
+
\langle w , v \rangle .
\end{equation}
Choosing $v = (\varphi - u - \varepsilon)^+ \in \spacegr$ in \eqref{eq:ineq-eleven}, we obtain
\begin{equation*}\label{eq:ineq_eleven1}
\int_\Omega
\gradgr u \cdot \gradgr (\varphi - u - \varepsilon)^+ \, dz
\ge
-
\int_\Omega
\Phi_k'(u) (\varphi - u - \varepsilon)^+ \, dz
+
\langle w , (\varphi - u - \varepsilon)^+ \rangle .
\end{equation*}
Now, let $v \in \spacegr$ such that $0 \le v \le \varphi$ a.e. in $\om$ and assume moreover that $\gradgr \varphi \in L^2(\{ v > 0 \})$. Let $(\hat v_k)$ be a sequence in $C_c^\infty(\Omega)$ converging to $v$ in $\spacegr$, and define $v_k = \min\{ \hat v_k^{+}, v \}$. Then we have
\begin{equation*}
\int_\Omega
\gradgr \varphi \cdot \gradgr v_k \, dz
\le
\int_\Omega
\varphi^{-\delta} v_k \, dz
+
\langle w , v_k \rangle .
\end{equation*}
If $\varphi^{-\delta} v \notin L^1(\Omega)$, then \eqref{eq:main-ineq-13} holds trivially. If $\varphi^{-\delta} v \in L^1(\Omega)$, we may pass to the limit as
$k \to \infty$ and obtain
\begin{equation}
\label{eq:main-ineq-13}
\int_\Omega
\gradgr \varphi \cdot \gradgr v \, dz
\le
\int_\Omega
\varphi^{-\delta} v \, dz
+
\langle w , v \rangle .
\end{equation}
In particular, choosing $v = (\varphi - u - \varepsilon)^+$, we deduce
\begin{equation}
\label{eq:main-ineq-14}
\int_\Omega
\gradgr \varphi \cdot
\gradgr (\varphi - u - \varepsilon)^+ \, dz
\le
\int_\Omega
\varphi^{-\delta} (\varphi - u - \varepsilon)^+ \, dz
+
\langle w , (\varphi - u - \varepsilon)^+ \rangle .
\end{equation}
Since $\varepsilon^{-\delta} < k$, from \eqref{eq:ineq_eleven1} and \eqref{eq:main-ineq-14} we deduce that
\begin{equation*}
\begin{aligned}
\int_\Omega
\bigl| \gradgr (\varphi - u - \varepsilon)^+ \bigr|^2 \, dz
&=
\int_\Omega
\gradgr (\varphi - u)
\cdot
\gradgr (\varphi - u - \varepsilon)^+ \, dz
\\
&\le
\int_\Omega
\left( \varphi^{-\delta} + \Phi_k'(u) \right)
(\varphi - u - \varepsilon)^+ \, dz
\\
&=
\int_\Omega
\left(
- \Phi_k'(\varphi) + \Phi_k'(u)
\right)
(\varphi - u - \varepsilon)^+ \, dz \leq 0.
\end{aligned}
\end{equation*}
Hence, $(\varphi - u - \varepsilon)^+ = 0$ a.e. in $\om$. Therefore, $\varphi \le u + \varepsilon \le \psi + \varepsilon $. Letting $\varepsilon \downarrow 0$, we conclude that $\varphi \le u \le \psi$ a.e. in $\om$.
\end{proof}

\begin{proof}[Proof of Theorem \ref{thm:singular-problem}]
Let $\Phi_k : \mathbb{R} \to \mathbb{R}$ and $F_{0,k} : L^2(\Omega) \to (-\infty,+\infty]$ be defined as in the previous section. Let $u_1 \in \spacegr \cap C^{0,\alpha}_{loc}(\Omega) \cap L^{\infty}(\om)$ be the solution of
\begin{equation*}
-\Delta_{\gamma} u_1 = 1
\qquad \text{in } \Omega .
\end{equation*}
Let us define
\begin{equation*}
\varphi
=
\|u_1\|_\infty^{-\frac{\delta}{1+\delta}}\, u_1,
\qquad
\psi
=
\bigl( (1+\delta) u_1 \bigr)^{\frac{1}{1+\delta}} .
\end{equation*}
Since $u_1 > 0$ in $\Omega$ by the Strong maximum principle in Lemma \ref{lem:SMP_grushin}, it follows that $\varphi > 0$ in $\om$. Also, by definition, we have $\varphi \le \psi$. Moreover, for every $k \ge \|u_1\|_\infty^{-\frac{\delta}{1+\delta}}$, $\varphi$ is a subsolution and $\psi$ is a supersolution of 
\begin{equation}
\label{eq:regularized-problem}
\begin{cases}
-\Delta_{\gamma} u = -\Phi_k'(u) & \text{in } \Omega, \\
u = 0 & \text{on } \partial\Omega .
\end{cases}
\end{equation}
Since the functional $F_{0,k}$ is strictly convex, it admits one and only one minimum, say $u_{0,k}$ on the convex set
\begin{equation*}
K_{\phi}^{\psi} =
\bigl\{
u \in \spacegr
:
\varphi \le u \le \psi \ \text{a.e.\ in } \Omega
\bigr\}.
\end{equation*}
Let $g(z,s) = -\Phi_k'(s)$, then $g(z,u_{0,k}) \in L^1_{loc}(\Omega)$, and the following inequality holds 
\begin{equation*}
 \int_{\om} \gradgr u_{0,k} \cdot \gradgr (v - u_{0,k}) dz = - \int_{\om} \Phi_k^{'} (u_{0,k}) (v - u_{0,k}) dz,   
\end{equation*}
for all $v \in K_{\phi}^{\psi}$. Hence, by Lemma \ref{lem:variational-identity}, the function $u$ is a weak solution of \eqref{eq:regularized-problem}. Since $u_{0,k}$ is a subsolution of
\begin{equation*}
\begin{cases}
-\Delta_{\gamma} u = -\Phi_{k+1}'(u) & \text{in } \Omega, \\
u = 0 & \text{on } \partial\Omega,
\end{cases}
\end{equation*}
a similar argument implies that $u_{0,k} \le u_{0,k+1}$ a.e. in $\om$. On the other hand, for every $\varepsilon > 0$, there exists
$\tilde{k} > \varepsilon^{-\delta}$ such that
\begin{equation*}
-\Delta_{\gamma} (u_{0,\tilde{k}} + \varepsilon)
=
-\Phi_{\tilde{k}}'(u_{0,\tilde{k}} + \varepsilon - \varepsilon)
\ge
-\Phi_k'(u_{0,\tilde{k}} + \varepsilon).
\end{equation*}
Therefore, $u_{0,\tilde{k}} + \varepsilon$ is a supersolution of $-\Delta_{\gamma} u = -\Phi_k'(u)$. By a weak comparison principle type argument, it is easy to obtain $u_{0,k} \le u_{0,\tilde{k}} + \varepsilon$. Consequently, the sequence $\{u_{0,k}\}$ is a Cauchy sequence in $L^\infty(\Omega)$. Hence, $\{u_{0,k}\}$ converges uniformly as $k \to \infty$ to some function $u_0 \in L^\infty(\Omega)$. Moreover, since $\varphi \le u_0 \le \psi$, we deduce that $u_0^{-\delta} \in L^\infty_{loc}(\Omega)$. Given $\varepsilon > 0$, testing the equation satisfied by $u_{0,k}$ with $(u_{0,k}-\varepsilon)^+$, we obtain
\begin{equation*}
\begin{aligned}
&\int_\Omega
\bigl| \gradgr (u_{0,k}-\varepsilon)^+ \bigr|^2 \, dz \\
&\quad =
-
\int_\Omega
\Phi_k'(u_{0,k}) (u_{0,k}-\varepsilon)^+ \, dz
\le
\varepsilon^{-\delta}
\int_\Omega
(u_{0,k}-\varepsilon)^+ \, dz
\le
\varepsilon^{-\delta}
\int_\Omega
(\psi-\varepsilon)^+ \, dz
< \infty .
\end{aligned}
\end{equation*}
It follows that the sequence $(u_{0,k}-\varepsilon)^+$ is bounded in $\spacegr$ as $k \to \infty$. Hence, by weak convegence, we conclude that $(u_0 - \varepsilon)^+ \in \spacegr$ for all $\varepsilon > 0$. 

Let $K \Subset \Omega$ be any compact set. Then $\varphi \ge 2\varepsilon_0$ on $K$ for some $\varepsilon_0 > 0$. 
Therefore, on $K$ we have $(u_0 - 2\varepsilon_0)^+ = u_0 - 2\varepsilon_0 \in \spacegr$, and hence
\begin{equation*}
\int_K
|\gradgr u_0|^2 \, dz
=
\int_K
\bigl| \gradgr (u_0 - 2\varepsilon_0) \bigr|^2 \, dz
< \infty .
\end{equation*}
For any $\phi \in C_c^\infty(K)$, we have
\begin{equation*}
\int_K
\gradgr (u_{0,k}-\varepsilon)^+ \cdot \gradgr \phi \, dz
\to
\int_K
\gradgr (u_0-\varepsilon)^+ \cdot \gradgr \phi \, dz
\end{equation*}
as $k \to \infty$. Therefore,
\begin{equation*}
\int_K
\gradgr (u_{0,k}-\varepsilon) \cdot \gradgr \phi \, dz
\to
\int_K
\gradgr (u_0-\varepsilon) \cdot \gradgr \phi \, dz ,
\end{equation*}
and equivalently,
\begin{equation*}
\int_K
\gradgr u_{0,k} \cdot \gradgr \phi \, dz
\to
\int_K
\gradgr u_0 \cdot \gradgr \phi \, dz .
\end{equation*}
Hence, we obtain weak convergence of the gradients on compact subsets. Consequently, we deduce that $u_0 \in H_{\gamma,loc}^{1}(\om)$, and $u_0 \leq 0$ on $\partial \om$ in the sense of \eqref{def:boundary_def}. Then, equation~\eqref{eq:regularized-problem} implies that
\begin{equation*}
-\Delta_{\gamma} u_0 = u_0^{-\delta}
\end{equation*}
in the sense of distributions. The uniqueness of $u_0$ follows from Lemma \ref{lem:comparison}.
\end{proof}


\section{Brezis-Nirenberg problem for a fully singular problem}\label{sec:brezis_nirenberg_grushin}
Finally, in this section, we study the existence and multiplicity problem for the following equation
where $\om$ is a bounded domain, $\delta>0,$ $\la > 0$ and $p \geq 1$.
To handle the singular term in the problem \eqref{eq:singular_semilinear_problem}, we consider the following translated problem 
\begin{equation}\tag{$\mathcal{T}$}
\label{eq:shifted_singular_problem}
\begin{cases}
-\Gr u + u_{0}^{-\delta} - (u + u_{0})^{-\delta} = \lambda (u + u_{0})^{p} & \text{in } \Omega, \\
u = 0 & \text{on } \partial\Omega,
\end{cases}
\end{equation}
where $u_{0}$ satisfies \eqref{eq:singular_grushin}. 
We mainly use the theorems developed in the previous sections to prove the existence and multiplicity results.


For notational convenience we define $g: \om \times \real \to [-\infty, \infty]$ by 
\begin{equation*}
g(z,s) =
\begin{cases}
u_{0}(z)^{-\delta} - \bigl( s + u_{0}(z) \bigr)^{-\delta},
& \text{for } (z,s) \in \Omega \times \mathbb{R} \text{ with } s + u_{0}(z) > 0, \\[0.6ex]
-\infty, & \text{otherwise},
\end{cases}
\end{equation*}
and its primitive as
\begin{equation*}
G(z,s) = \displaystyle \int_{0}^{s} g(z,\tau)\, d\tau,
\qquad \text{for } (z,s) \in \Omega \times \mathbb{R}.
\end{equation*}
Clearly, $g$ and $G$ satisfies assumption $(A2)$ defined in Section \ref{sec:brezis_nirenberg_general}.

We define the functional $I : \spacegr \to (-\infty,\infty]$ by
\begin{equation*}
I(u) =
\begin{cases}
\displaystyle
\frac{1}{2} \int_{\Omega} |\gradgr u|^{2}\, dz
+ \int_{\Omega} G(z,u)\, dz
- \frac{\lambda}{p+1} \int_{\Omega} |u + u_{0}|^{p+1}\, dz,
& \text{if } G(z,u) \in L^{1}(\Omega) \\ & \quad  \text{ and } u \in L^{p+1}(\Omega), \\[1.2ex]
\infty, & \text{otherwise}.
\end{cases}
\end{equation*}
We note that $u \in L^{p+1}(\om)$ is required only when we are in the supercritical case only. For every subset $K \subset \spacegr \cap L^{p+1}(\Omega)$ and for every $\lambda>0$, we also define the constrained functional $I_K : \spacegr \to (-\infty,\infty]$ by
\begin{equation*}
I_K(u) =
\begin{cases}
I(u), & \text{if } u \in K \text{ and } G(z,u) \in L^{1}(\Omega), \\
\infty, & \text{otherwise}.
\end{cases}
\end{equation*}
for every $u \in \spacegr$.

We set
\begin{equation*}
\Lambda
= \sup \bigl\{ \lambda>0 : \text{there exists a positive weak solution of } \eqref{eq:shifted_singular_problem}_{\lambda}
\text{ which belongs to } L^{\infty}(\Omega) \bigr\}.
\end{equation*}
We note that, in the subcritical and critical cases, one has
\begin{equation*}
\Lambda
= \sup \bigl\{ \lambda>0 : \text{there exists a positive weak solution of } \eqref{eq:shifted_singular_problem}_{\lambda} \bigr\}.
\end{equation*}

First, note that, in view of Lemma $4$ of \cite{hirano_concave_convex}, the functions $g$ and $G$ satisfies the following properties
\begin{lemma}
\label{lem:properties_of_G}
For each $z \in \Omega$, the following properties hold:
\begin{enumerate}
  \item[(i)]
  For every $r \ge 1$ and $s \ge 0$, $ G(z, r s) \le r^{2} \, G(z, s)$.

  \item[(ii)]
  For every $s,t$ with $s \ge t > -u_1(z)$ where $u_1$ satisfies \eqref{eq:one_grushin}, we have $ G(z, s) - g(z,t) - \frac{ g(z, s) + g(z,t) }{2}\, (s - t) \ge 0$.

  \item[(iii)]
  For every $s \ge 0$, $ G(z, s) - \frac{1}{2} \, g(z, s) \, s \ge 0$.
\end{enumerate}
\end{lemma}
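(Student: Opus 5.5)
All three properties are elementary one-variable facts, so I would fix $z \in \Omega$ and work pointwise. Set $a := u_0(z) > 0$; this is positive by Theorem \ref{thm:singular-problem}. On the interval $(-a,\infty)$, where $g(z,\cdot)$ is finite, write
\begin{equation*}
h(\tau) := g(z,\tau) = a^{-\delta} - (\tau + a)^{-\delta}, \qquad G(z,s) = \int_0^s h(\tau)\,d\tau .
\end{equation*}
The key observation is that $\tau \mapsto (\tau+a)^{-\delta}$ is positive, decreasing and convex on $(-a,\infty)$. Hence $h$ is increasing and \emph{concave} there, with $h(0)=0$. Every item should then follow from concavity of $h$ combined with $h(0)=0$ and $G(z,0)=0$.

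For (i), take $s \ge 0$ and $r \ge 1$, and substitute $\tau = r\sigma$:
\begin{equation*}
G(z,rs) = r\int_0^s h(r\sigma)\,d\sigma .
\end{equation*}
Concavity and $h(0)=0$ give, for $\sigma \ge 0$,
\begin{equation*}
h(\sigma) = h\Bigl(\tfrac1r (r\sigma) + \bigl(1-\tfrac1r\bigr)\cdot 0\Bigr) \ge \tfrac1r\, h(r\sigma),
\end{equation*}
that is, $h(r\sigma) \le r\,h(\sigma)$. Therefore
\begin{equation*}
G(z,rs) \le r^2 \int_0^s h(\sigma)\,d\sigma = r^2 G(z,s).
\end{equation*}

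For (ii), I read the intended inequality as
\begin{equation*}
G(z,s) - G(z,t) - \tfrac12\bigl(g(z,s)+g(z,t)\bigr)(s-t) \ge 0,
\end{equation*}
so the ``$g(z,t)$'' in the statement is taken to be a typo for $G(z,t)$. I take the lower bound on $t$ to be the range where $g(z,t)$ is finite, namely $t > -u_0(z)$. If the bound $-u_1(z)$ is really intended, this is harmless whenever $u_1 \le u_0$ holds on the relevant set; otherwise we simply restrict to $t > -u_0(z)$, where everything is defined. The claim is then the trapezoid inequality for a concave function. On $[t,s]$ the chord satisfies $h(\tau) \ge h(t) + \frac{h(s)-h(t)}{s-t}(\tau - t)$. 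Integrating this from $t$ to $s$ yields
\begin{equation*}
\int_t^s h(\tau)\,d\tau \ge \tfrac12\bigl(h(s)+h(t)\bigr)(s-t),
\end{equation*}
which is exactly the claim. Item (iii) is the special case $t = 0$ of (ii), since $h(0)=0$ and $G(z,0)=0$.

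The only step needing care, and the one I regard as the main obstacle, is bookkeeping of the domain. Concavity of $h$ holds only on $(-a,\infty)$, and $G(z,s)=+\infty$ or $g(z,s)=-\infty$ outside it, so I would state explicitly that all three inequalities are used only on that interval. Item (i) involves only $s \ge 0$ and item (ii) assumes $t > -a$, so no extended-value arithmetic arises and the argument is the same for every $z$.
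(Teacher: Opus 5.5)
Your proof is correct. The paper does not actually prove this lemma. It simply invokes Lemma 4 of \cite{hirano_concave_convex}, since the statement is a pointwise, one-variable fact about $g(z,\cdot)=u_0(z)^{-\delta}-(\cdot+u_0(z))^{-\delta}$ and never involves the Grushin operator.

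You instead give a self-contained argument built on one structural fact: $h=g(z,\cdot)$ is increasing and concave on $(-u_0(z),\infty)$ with $h(0)=0$. From this:
\begin{itemize}
\item (i) follows from the sublinearity $h(r\sigma)\le r\,h(\sigma)$ for $\sigma\ge 0$, integrated after the substitution $\tau=r\sigma$;
\item (ii) is the trapezoid inequality for a concave function;
\item (iii) is (ii) with $t=0$.
\end{itemize}
What your route buys is transparency and generality. Any $g(z,\cdot)$ that is concave and vanishes at $0$ works, which is exactly why the cited lemma transfers verbatim to the Grushin setting. The citation buys only brevity.

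Your corrections to the statement are also right, and the first one is not optional. As literally written, (ii) is false. Take $s=t\in(0,1)$: the inequality reads $G(z,t)-g(z,t)\ge 0$. But monotonicity of $g(z,\cdot)$ gives $G(z,t)=\int_0^t g(z,\tau)\,d\tau\le t\,g(z,t)<g(z,t)$. The intended inequality, with $G(z,t)$ in place of $g(z,t)$, is precisely the one used in the proof of Proposition \ref{prop:CPS_condition}, applied with $s=v_k$ and $t=u$.

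Likewise, the lower bound should be $t>-u_0(z)$, the region where $g(z,t)$ is finite; the $u_1$ in the statement is a notational slip. You do not need to hedge about whether $u_1\le u_0$, and that inequality is not guaranteed in general. For $t\in(-u_1(z),-u_0(z)]$ one has $G(z,t)=+\infty$ and $g(z,t)=-\infty$, so the expression in (ii) is not even defined there.
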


Next, we show that $\Lambda>0$ and that, for every $\lambda \in (0,\Lambda)$, there exists a positive weak solution of $\eqref{eq:shifted_singular_problem}_{\lambda}$.

\begin{lemma}
The following statements hold:
\begin{enumerate}
  \item[(i)] For every $\lambda > 0$, the zero function is a strict subsolution of \eqref{eq:shifted_singular_problem}.
  
  \item[(ii)] For all sufficiently small $\lambda > 0$, the function $u_1$ is a strict supersolution of \eqref{eq:shifted_singular_problem} where $u_1$ satisfies \eqref{eq:one_grushin}.
  
  \item[(iii)] For any $\lambda, \mu$ with $\mu > \lambda > 0$ and for every positive weak solution $z$ of \eqref{eq:shifted_singular_problem} corresponding to $\mu$ and belonging to $L^{\infty}(\Omega)$, the function $z$ is a strict supersolution of \eqref{eq:shifted_singular_problem} corresponding to $\lambda$.
\end{enumerate}
\end{lemma}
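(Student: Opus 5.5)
The plan is to verify, for each of the three candidate functions, conditions (i)--(vi) of Definition~\ref{def:weak_sub_super_solution_grushin} for problem \eqref{eq:shifted_singular_problem}. Here $g$ is as defined above and $f(z,s)=\lambda(s+u_0(z))^p$ for $s+u_0(z)>0$. Two facts from Theorem~\ref{thm:singular-problem} will be used throughout. First, $u_0$ is bounded, since $u_0\le ((1+\delta)u_1)^{1/(1+\delta)}$ and $u_1\in L^\infty(\Omega)$ by Lemma~\ref{lem:uniform_estimate_grushin}. Second, $u_0\ge \|u_1\|_\infty^{-\delta/(1+\delta)}u_1>0$ in $\Omega$ by Lemma~\ref{lem:SMP_grushin}, so $u_0^{-\delta}\in L^\infty_{loc}(\Omega)$. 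Also $g(z,\cdot)$ is increasing with $g(z,0)=0$, so $g(z,s)\ge 0$ for $s\ge 0$. Consequently, for every nonnegative $\varphi$, the quantity $g^-(\cdot,\varphi)$ vanishes, and condition (v) for supersolutions is trivial.

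\textbf{(i) The zero function.} We have $0\in\spaceg$, $0^+=0\in\spacegr$, and $g(\cdot,0)\equiv 0$, so $g^+(\cdot,0)\in L^{2Q/(Q+2)}(\Omega)$. The distributional inequality reads $-\Gr 0+0-\lambda u_0^p=-\lambda u_0^p\le 0$. For strictness we take $v\in\spacegr\setminus\{0\}$ with $v\ge0$. Since $u_0$ is bounded, the integral $\int_\Omega \lambda u_0^p v\,dz$ is finite. It is strictly positive because $u_0>0$ in $\Omega$ and $v$ is positive on a set of positive measure. Hence $\int_\Omega(\gradgr 0\cdot\gradgr v+g(z,0)v-f(z,0)v)\,dz=-\lambda\int_\Omega u_0^pv\,dz<0$.

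\textbf{(ii) The function $u_1$.} We have $u_1\in\spacegr$ and $u_1^-=0$. Moreover $0\le g(z,u_1)\le u_0^{-\delta}\in L^1_{loc}(\Omega)$, and $g^-(\cdot,u_1)=0$. Set $M:=\|u_1\|_\infty+\|u_0\|_\infty$ and choose $\lambda<\frac12 M^{-p}$. Then pointwise
\[
1+g(z,u_1)-\lambda(u_1+u_0)^p\ge \tfrac12,
\]
which gives the distributional inequality after testing $-\Gr u_1=1$ with $C_c^\infty$ functions. For (vi), I first extend the weak identity $\int_\Omega\gradgr u_1\cdot\gradgr v\,dz=\int_\Omega v\,dz$ from $C_c^\infty(\Omega)$ to all $v\in\spacegr$ by density. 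The term $\int_\Omega g(z,u_1)v\,dz$ has a nonnegative integrand, so it is well defined in $[0,\infty]$. This yields
\[
\int_\Omega\bigl(\gradgr u_1\cdot\gradgr v+g(z,u_1)v-\lambda(u_1+u_0)^pv\bigr)\,dz\ \ge\ \tfrac12\int_\Omega v\,dz>0
\]
for $v\ge0$, $v\neq0$.

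\textbf{(iii) A bounded positive solution $w$ for $\mu>\lambda$.} Let $w$ denote the solution (called $z$ in the statement). Conditions (i)--(iv) follow from $w$ being a weak solution together with $g(\cdot,w)\ge0$, and (v) holds because $g^-(\cdot,w)=0$. For (vi), formally
\[
\int_\Omega\bigl(\gradgr w\cdot\gradgr v+g(z,w)v-\lambda(w+u_0)^pv\bigr)\,dz=(\mu-\lambda)\int_\Omega(w+u_0)^pv\,dz>0,
\]
with the right side finite since $w,u_0\in L^\infty(\Omega)$. The main obstacle is justifying this identity for arbitrary $v\in\spacegr$ with $v\ge0$, since the equation is known only against $C_c^\infty(\Omega)$ and $g(\cdot,w)$ is merely $L^1_{loc}$. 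To handle it, I would apply Lemma~\ref{lem:density_grushin_space} to get an increasing sequence $v_k\uparrow v$ of bounded, compactly supported functions in $\spacegr$, with $v_k\to v$ strongly. For such $v_k$ the identity follows by mollification, using $g(\cdot,w)\in L^1_{loc}$ and the compact support. One then passes to the limit: the gradient term and the $(w+u_0)^p$ terms converge by strong convergence and boundedness, and $\int_\Omega g(z,w)v_k\,dz\uparrow\int_\Omega g(z,w)v\,dz$ by monotone convergence. The limit identity also shows $g(\cdot,w)v\in L^1(\Omega)$, which completes the verification.
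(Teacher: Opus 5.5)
Your proposal is correct and follows the paper's route. Part (ii) is the same smallness argument: $\lambda(u_1+u_0)^p<1$ pointwise, so the integrand is bounded below by a positive multiple of the test function. Parts (i) and (iii), which the paper simply calls trivial, are the direct checks you carry out. Your extension of the test class from $C_c^\infty(\Omega)$ to all nonnegative $v\in\spacegr$ is what condition (vi) of the definition actually requires, and the paper's proof omits it. You do this by density for $u_1$, and by the monotone approximation of Lemma~\ref{lem:density_grushin_space} for the solution in (iii). The latter coincides with the argument the paper later gives in Lemma~\ref{lem:weak_solution_identity}.
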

\begin{proof}
$(i)$ and $(iii)$ follows trivially. For $(ii)$, note that $u_1, u_{0} \in L^{\infty}(\om)$, then there exists $\la_0$ sufficiently small such that for all $0 < \la < \la_0$ we have $1 > \la (u_1 + u_{0})^p$ for every $z \in \om$. Then for every $w \in C_c^{\infty}(\om)$, we get
\begin{equation*}
    \int_{\om} ( \gradgr u_1 \cdot \gradgr w + g(z,u_1)w - \la (u_1 + u_{0})^pw)dz \geq \int_{\om} (1- \la(u_1 + u_{0})^p)w dz > 0.
\end{equation*}
This proves $(ii)$.
\end{proof}

\begin{lemma}\label{lem:existence_critical_firstsoln}
We have $\Lambda>0$. Moreover, for every $\lambda\in(0,\Lambda)$ there exists a positive weak solution $u_\lambda$ of \eqref{eq:shifted_singular_problem} such that
\[
u_\lambda \in C_{loc}^{0,\al}(\Omega)\cap L^\infty(\Omega), \qquad I(u_\lambda)<0.
\]
In addition, $u_\lambda$ is a local minimizer for $I_{K_0}$ in the subcritical and critical regimes, and $u_\lambda$ is a local minimizer for $I_{K_0^v}$ in the supercritical regime, where $v$ is any function in $\spacegr\cap L^\infty(\Omega)$ satisfying $v\ge u_\lambda$. Furthermore, in the subcritical case there exists $\rho_0>0$ such that
\begin{equation*}
I_{K_0}(u_\lambda) < \inf\bigl\{ I_{K_0}(v) : v\in K_0,\ \|(v-\varphi_2)^+\| = \rho \bigr\}
\end{equation*}
for all $\rho\in(0,\rho_0]$.
\end{lemma}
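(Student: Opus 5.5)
The plan is a Perron-type argument in the translated setting of \eqref{eq:shifted_singular_problem}, taking $\varphi_1=0$ as subsolution. By the preceding lemma, $0$ is a strict subsolution for every $\lambda>0$. For $\lambda\in(0,\lambda_0)$, $\varphi_2=u_1$ is a strict supersolution. Consequently $\Lambda\ge\lambda_0>0$.

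For a fixed $\lambda\in(0,\Lambda)$, pick $\mu\in(\lambda,\Lambda)$ for which $(\mathcal{T})_\mu$ has a positive weak solution $\bar z\in L^\infty(\Omega)$; this exists by the definition of $\Lambda$. By part (iii) of the preceding lemma, $\varphi_2:=\bar z$ is a strict supersolution of $(\mathcal{T})_\lambda$, and $\varphi_2$ is not a solution of $(\mathcal{T})_\lambda$.

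\textbf{Subcritical and critical cases ($p\le 2_\gamma^*-1$).} Check the hypotheses of Theorem~\ref{thm:existence_between_sub_super}.
\begin{enumerate}
\item[] (A1) and (A3) hold with $f(z,s)=\lambda(s+u_0)^p$ for $s\ge -u_0$, extended suitably below. Here $a_1,a_4$ are bounded, since $u_0\in L^\infty$ (from $u_0\le((1+\delta)u_1)^{1/(1+\delta)}$), and $\bar p=p$.
\item[] (A2) holds, as noted after the definition of $g$.
\item[] $G(\cdot,0)=0$ and $G(\cdot,\varphi_2)\in L^1_{loc}$, because $g(z,s)$ is bounded for $s\ge0$ by $u_0^{-\delta}\in L^\infty_{loc}$.
\item[] $v=0$ lies between $0$ and $\varphi_2$ with $G(\cdot,0)\in L^1$.
\end{enumerate}
Theorem~\ref{thm:existence_between_sub_super}(ii) then gives a weak solution $u_\lambda$ with $0\le u_\lambda\le\varphi_2$ that is a local minimizer of $I_{K_0}$. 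When $p<2_\gamma^*-1$ it also gives the strict estimate \eqref{eq:strict_local_minimum_estimate}, which is the last assertion.

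\textbf{Supercritical case.} Use Theorem~\ref{thm:local_minimizer_from_supersolution_supercritical} instead. Hypotheses (A1') and (A3') hold because $f$ and $\partial_s f$ are bounded on bounded sets. Together with $\varphi_1,\varphi_2\in L^\infty$, this yields $u_\lambda$ minimizing $I_{K_0^v}$ for all $v\ge u_\lambda$ in $\spacegr\cap L^\infty(\Omega)$.

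\textbf{Properties of $u_\lambda$.}
\begin{enumerate}
\item[] \emph{Positivity.} $u_\lambda\not\equiv0$, since $0$ is a strict subsolution and hence not a solution. We have $-\Gr u_\lambda + c(z)u_\lambda\ge0$ with $c=-g(z,u_\lambda)/u_\lambda\le0$ locally bounded and $f\ge0$. Lemma~\ref{lem:SMP_grushin} then gives $u_\lambda>0$.
\item[] \emph{Boundedness.} $u_\lambda\le\varphi_2\in L^\infty$.
\item[] \emph{H\"older regularity.} The right-hand side $\lambda(u_\lambda+u_0)^p+(u_\lambda+u_0)^{-\delta}-u_0^{-\delta}$ is locally bounded because $u_0$ is bounded below on compacts. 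Theorem 5.5 of \cite{lanconelli_holder_Xelliptic} then gives $u_\lambda\in C^{0,\alpha}_{loc}$.
\end{enumerate}

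\textbf{Negativity of the energy.} It remains to show $I(u_\lambda)<0$. Since $u_\lambda$ minimizes $I$ on $K_0^{\varphi_2}$, it suffices to find $w\in K_0^{\varphi_2}$ with $I(w)<I(0)=-\frac{\lambda}{p+1}\int u_0^{p+1}$. Caution: $I(0)$ itself is negative, so one must be careful about the normalization. If $I$ is meant with $F(z,s)=\frac{\lambda}{p+1}((s+u_0)^{p+1}-u_0^{p+1})$, then $I(0)=0$ and the claim is $I(u_\lambda)<I(0)$.

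To get this, take $w=t\psi$ with $\psi\in C_c^\infty$, $0\le\psi\le\varphi_2$, and $t$ small. The derivative of $I$ at $0$ in direction $\psi$ is $\int(0-\lambda u_0^p)\psi<0$, because $g(z,0)=0$ and $f(z,0)=\lambda u_0^p>0$. Hence $I(t\psi)<I(0)$ for small $t>0$, and so $I(u_\lambda)<I(0)=0$ under this normalization.

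\textbf{Main obstacle.} The step I expect to be delicate is verifying the structural hypotheses: (A2)(iii), the lower bound $G\ge -a_2|t|-a_3$, given that $u_0^{-\delta}$ is not integrable near $\partial\Omega$ for large $\delta$; and, in the critical case, the integrability of $G(\cdot,\varphi_2)$. For (A2)(iii), I would use that $G(z,s)\ge0$ for $s\ge0$ and that $K_0$ only involves $s\ge0$. If needed, I would modify $g$ for $s<0$, which does not affect $I_{K_0}$.
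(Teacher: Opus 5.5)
Your proposal is correct and follows the paper's route. You take $0$ as a strict subsolution, take $u_1$ (for small $\lambda$) or a bounded positive solution of $(\mathcal{T})_\mu$ with $\mu>\lambda$ as a strict supersolution, and then Theorem~\ref{thm:existence_between_sub_super} (resp.\ Theorem~\ref{thm:local_minimizer_from_supersolution_supercritical} in the supercritical case) produces $u_\lambda$; you also supply the positivity, H\"older and energy details that the paper calls straightforward.

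Three small corrections. In the positivity step the coefficient should be $c=g(z,u_\lambda)/u_\lambda\ge 0$, since the equation reads $-\Gr u_\lambda+c\,u_\lambda=\lambda(u_\lambda+u_0)^p\ge 0$. Your worry about (A2)(iii) is unfounded: $G(z,\cdot)$ is convex with $G(z,0)=g(z,0)=0$, so $G\ge 0$ everywhere. Finally, with the paper's normalization $I(u_\lambda)\le I(0)=-\frac{\lambda}{p+1}\int_\Omega u_0^{p+1}\,dz<0$ is immediate, so your derivative argument is not needed.
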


\begin{proof}
By Lemma~5(i), the zero function is a strict subsolution of \eqref{eq:shifted_singular_problem} for every $\lambda>0$. Fix $\lambda>0$ such that there exists a positive strict supersolution $\varphi_2$ of \eqref{eq:shifted_singular_problem} with $\varphi_2\in L^\infty(\Omega)$. It is straightforward to verify that the hypotheses of Theorem \ref{thm:existence_between_sub_super} (in the subcritical and critical cases) or of Theorem \ref{thm:local_minimizer_from_supersolution_supercritical} (in the supercritical case).
\end{proof}

\begin{lemma}
\label{lem:weak_solution_identity}
If $u$ is nonnegative weak solution of \eqref{eq:shifted_singular_problem}, then it satisfies $g(z,u)w \in L^1(\Omega)$ and
\begin{equation*}
\int_{\Omega} \bigl( \gradgr u \cdot \gradgr w + g(z,u)w - \lambda (u+u_{0})^p w \bigr)\, dz = 0
\qquad \text{for every } w \in \spacegr.
\end{equation*}
In the supercritical regime, we also need to assume that $u \in L^{\infty}(\om)$.
\end{lemma}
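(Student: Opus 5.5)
The plan is to show first that $g(\cdot,u)w\in L^1(\Omega)$ and that the identity holds for nonnegative $w\in\spacegr$, and then to pass to arbitrary $w$ by splitting $w=w^+-w^-$. The key structural fact is the sign of $g$. Since $u\ge 0$ and $u_0>0$, we have
\[
g(z,u)=u_0^{-\delta}-(u+u_0)^{-\delta}\ge 0 .
\]
For the bound from above we use the identity $-\Gr u_0=u_0^{-\delta}$, which gives
\[
0\le g(z,u)\le u_0^{-\delta}.
\]
This bound is not integrable against $w$ in general. The integrability of $g(z,u)w$ will therefore come from the equation itself through Fatou's lemma, and not from a pointwise majorant.

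\textbf{Step 1 (bounded, compactly supported test functions).} For $w\in\spacegr\cap L^\infty(\Omega)$ with compact support in $\Omega$, the identity follows from the distributional equation by mollification. Let $w_\varepsilon=w*\rho_\varepsilon\in C_c^\infty(\Omega)$. The supports of the $w_\varepsilon$ stay in a fixed compact set $K$, the family is uniformly bounded in $L^\infty$, and $w_\varepsilon\to w$ in $\spacegr$ and a.e. On $K$ both $g(\cdot,u)$ and $(u+u_0)^p$ lie in $L^1(K)$: the first because $u_0$ is bounded below on $K$ (Theorem~\ref{thm:singular-problem} gives $u_0\ge c\,u_1>0$ locally), the second because $u\in L^{p+1}$. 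In the supercritical case $u\in L^\infty$ is assumed, which handles $(u+u_0)^p$. Dominated convergence then lets us pass to the limit in the integral identity.

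\textbf{Step 2 (general nonnegative $w$).} Take $w\in\spacegr$ with $w\ge 0$. By Lemma~\ref{lem:density_grushin_space} there is an increasing sequence $w_k\ge 0$, each bounded with compact support, with $w_k\to w$ in $\spacegr$. Step 1 gives
\[
\int_\Omega g(z,u)w_k\,dz=\lambda\int_\Omega (u+u_0)^p w_k\,dz-\int_\Omega \gradgr u\cdot\gradgr w_k\,dz.
\]
The right-hand side converges.
\begin{itemize}
\item In the subcritical and critical cases, $(u+u_0)^p\in L^{2Q/(Q+2)}$ by the Sobolev embedding, since $u\in\spacegr$ and $u_0\in L^\infty$.
\item In the supercritical case, $u\in L^\infty$ is assumed, so $(u+u_0)^p\in L^\infty$.
\end{itemize}
Since $g(z,u)\ge 0$ and $w_k\uparrow w$, the monotone convergence theorem gives $g(z,u)w\in L^1(\Omega)$, and the identity holds for $w$. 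For a general $w\in\spacegr$ we apply this to $w^+$ and $w^-$ and subtract, using linearity.

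\textbf{Main obstacle.} The delicate point is Step 1: it requires $g(\cdot,u)\in L^1_{loc}$ and the local positivity of $u_0$ near every compact set. One also has to check that Lemma~\ref{lem:density_grushin_space} really produces an increasing approximating sequence of nonnegative functions with compact support. If it did not, I would instead truncate with $\min\{w,\,\eta_k\}$ for cutoffs $\eta_k$ and use Fatou's lemma on the nonnegative integrand $g(z,u)w_k$. That still gives $\int g(z,u)w<\infty$, and the identity then follows by dominated convergence with majorant $g(z,u)w$.
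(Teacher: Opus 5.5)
Your proposal is correct and is essentially the paper's proof. Both reduce to $w\ge 0$ via $w=w^+-w^-$, approximate by the increasing, bounded, compactly supported sequence of Lemma~\ref{lem:density_grushin_space}, and pass to the limit by monotone convergence using $g(z,u)\ge 0$; your Step~1 spells out the extension from $C_c^\infty(\Omega)$ test functions that the paper compresses into ``by definition of $u$''. (One small remark on Step~1: for $0<\gamma<1$ the field $|x|^{\gamma}\partial_{y_j}$ is not Lipschitz, so Friedrichs' lemma does not directly give convergence of standard mollifiers in $\spacegr$. It is cleaner to take $C_c^\infty(\Omega)$ approximants of $w$, which exist by definition of $\spacegr$, multiply by a fixed cutoff equal to $1$ near $\operatorname{supp} w$, and compose with a smooth bounded truncation; this gives the uniform support and $L^\infty$ control you need.)
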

\begin{proof}
Since every $w \in \spacegr$ can be written as $w = w^+ - w^-$. It is enough to consider $w \geq 0$. By the Lemma \ref{lem:density_grushin_space}, there exists an increasing sequence $w_k \in \spacegr \cap L^{\infty}(\om)$. Then by definition of $u$, we have 
\begin{equation*}
\int_{\om} g(z,u) w_k \, dz  =  \int_{\Omega} \bigl( - \gradgr u \cdot \gradgr w_k + \lambda (u+u_{0})^p w_k \bigr)\, dz .
\end{equation*}
Passing the limit with the help of Monotone convergence theorem, we have our conclusion. 
\end{proof}
Let $u$ denotes the first solution obtained in Lemma \ref{lem:existence_critical_firstsoln}, then we have the following proposition
\begin{proposition}
\label{prop:CPS_condition}
The functional $I_{K_u}$ satisfies $(\mathrm{CPS})_c$ for every $c \in \mathbb{R}$ in the subcritical case, and for each $c < I_{K_u}(u) + \frac{S^{Q/2}}{Q \lambda^{(Q-2)/2}}$ in the critical case.
\end{proposition}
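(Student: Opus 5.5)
Let $\{u_k\}\subset \mathcal D(I_{K_u})$ satisfy $I_{K_u}(u_k)\to c$ and $(1+\|u_k\|)\tnorm{\partial^- I_{K_u}(u_k)}\to 0$. The plan has three steps: bound the sequence, extract a weak limit, and upgrade to strong convergence. In the critical case the last step goes through a Brezis--Lieb/concentration argument.

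\textbf{Step 1 (boundedness).} Choose $\alpha_k\in\partial^- I_{K_u}(u_k)$ with $\|\alpha_k\|(1+\|u_k\|)\to0$. Apply Lemma~\ref{lem:variational_inequality_characterization} with the admissible test function $v=u_k+t u_k=(1+t)u_k\in K_u$, and with $v=u_k-t(u_k-u)$, which lies in $K_u$ for $t\in(0,1]$. Dividing by $t$ and letting $t\to0$ (using Lemma~\ref{lem:properties_of_G}(i) to control $G((1+t)u_k)$), this yields
\[
\|u_k\|^2+\int_\Omega g(z,u_k)u_k-\lambda\int_\Omega(u_k+u_0)^p u_k \;\ge\; \langle\alpha_k,u_k\rangle \;\ge\; -o(1).
\]
The reverse inequality, up to a term $\langle \alpha_k,u_k-u\rangle$, follows from the test function $u_k-t(u_k-u)$ together with the equation for $u$ (Lemma~\ref{lem:weak_solution_identity}).

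Next form $I(u_k)-\frac1{p+1}\langle \text{derivative}, u_k\rangle$. By Lemma~\ref{lem:properties_of_G}(iii), $G(z,s)-\frac12 g(z,s)s\ge0$, so the singular part has a favourable sign. The $\lambda$-terms combine to
\[
\lambda\int_\Omega\Bigl(\tfrac1{p+1}(u_k+u_0)^pu_k-\tfrac1{p+1}(u_k+u_0)^{p+1}\Bigr)+\dots,
\]
which is bounded below by $-C\int_\Omega u_0(u_k+u_0)^p$. Since $u_0\in L^\infty$, this is $\le \varepsilon\|u_k\|_{p+1}^{p+1}+C_\varepsilon$. Together with $p>1$, this gives $(\tfrac12-\tfrac1{p+1})\|u_k\|^2\le C+o(1)\|u_k\|$, so $\{u_k\}$ is bounded in $\spacegr$. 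Up to a subsequence, $u_k\rightharpoonup w$ weakly, $u_k\to w$ a.e. and in $L^q$ for $q<2_\gamma^*$, and $w\in K_u$.

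\textbf{Step 2 ($w$ is a critical point).} Passing to the limit in the variational inequality of Lemma~\ref{lem:variational_inequality_characterization}, using Fatou for the $G$-terms and the convexity of $G$, shows $0\in\partial^- I_{K_u}(w)$. Hence $w$ is a weak solution by Proposition~\ref{prop:sub_super_solution_criterion}(i), since $u$ is a subsolution. Lemma~\ref{lem:weak_solution_identity} then applies to $w$.

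\textbf{Step 3 (strong convergence).} Test the inequality for $u_k$ with $v=w$, which is admissible in $K_u$, and test the equation for $w$ with $u_k-w$. Subtracting, and using the monotonicity of $g(z,\cdot)$ so that $(g(z,u_k)-g(z,w))(u_k-w)\ge0$, gives
\[
\|u_k-w\|^2\le \lambda\int_\Omega\bigl((u_k+u_0)^p-(w+u_0)^p\bigr)(u_k-w)+o(1).
\]
In the subcritical case the right side tends to $0$ by compact embedding, and we are done.

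In the critical case, put $r_k=u_k-w$. Apply the Brezis--Lieb lemma, together with Proposition~\ref{ineq:imp_ineq} to split off $(u_k+u_0)^{2_\gamma^*}$, to get
\[
\|r_k\|^2=\lambda\|r_k\|_{2^*_\gamma}^{2^*_\gamma}+o(1), \qquad I(u_k)=I(w)+\tfrac12\|r_k\|^2-\tfrac{\lambda}{2_\gamma^*}\|r_k\|_{2_\gamma^*}^{2_\gamma^*}+o(1).
\]
The $G$-term converges by dominated convergence, via Lemma~\ref{lem:property_G_1} and the bound $|G(z,s)|\le C|s|\,u_0^{-\delta}$ near $u_0$. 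This bound is the delicate point; if it fails, I would fall back on lower semicontinuity giving only the inequality $\le$, which is enough.

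If $\|r_k\|^2\to \ell>0$, the Sobolev inequality gives $\ell\ge S^{Q/2}\lambda^{-(Q-2)/2}$, and therefore
\[
c\ge I(w)+\tfrac1Q\ell\ge I_{K_u}(u)+\frac{S^{Q/2}}{Q\lambda^{(Q-2)/2}}.
\]
Here $I(w)\ge I_{K_u}(u)$ because $u$ is a local minimizer, and the inequality is obtained via comparison along the segment, or alternatively by using that $w\ge u$ and is a solution. This contradicts the assumed bound on $c$, so $\ell=0$. I expect the main obstacle to be justifying $I(w)\ge I(u)$ and the treatment of the singular $G$-term in this splitting.
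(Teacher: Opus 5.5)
Your subcritical argument works, apart from a slip in Step 1 noted below. The critical case, however, has a genuine gap: the identity $\|r_k\|^2=\lambda\|r_k\|_{2_\gamma^*}^{2_\gamma^*}+o(1)$ cannot be obtained in this constrained setting. Testing the variational inequality with $v=w$, as in your Step 3 display, gives only $\|r_k\|^2\le\lambda\|r_k\|_{2_\gamma^*}^{2_\gamma^*}+o(1)$, i.e.\ $a^2\le\lambda b^{2_\gamma^*}$. The reverse inequality would require perturbing $u_k$ in the direction $+(u_k-w)$, i.e.\ using the test function $2u_k-w$. That function need not belong to $K_u$: it falls below $u$ wherever $u_k$ is close to $u$ while $w>u$. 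With only $a^2\le\lambda b^{2_\gamma^*}$, your splitting $c\ge I(w)+\frac12a^2-\frac{\lambda}{2_\gamma^*}b^{2_\gamma^*}$ gives no positive lower bound at all.

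Step 2 suffers from the same loss of compactness in the critical case. Passing to the limit in the variational inequality requires $\int_\Omega(u_k+u_0)^{2_\gamma^*-1}u_k\to\int_\Omega(w+u_0)^{2_\gamma^*-1}w$, which is precisely what concentration destroys. Meanwhile, the perturbations $u_k+t\varphi$ with $\varphi\in C_c^\infty(\Omega)$ stay in $K_u$ only for $\varphi\ge0$, so they show merely that $w$ is a supersolution. Finally, $I(w)\ge I_{K_u}(u)$ does not follow from $u$ being a local minimizer. Your alternative (a trapezoid comparison of two ordered solutions) would work, but only once Step 2 is established.

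The paper never uses the weak limit as a comparison point. Since $u\le 2v_k-u\le 2v_k$, the function $2v_k-u$ is admissible. Taking half the sum of the variational inequality tested with it and the equation of $u$ tested with $v_k-u$ produces exactly $\frac12\|v_k\|^2-\frac12\|u\|^2$. The two trapezoid inequalities (Lemma~\ref{lem:properties_of_G}(ii) for the concave map $g(z,\cdot)$, and convexity of $\tau\mapsto(\tau+u_0)^{2_\gamma^*-1}$), together with Brezis--Lieb, then yield $c-I_{K_u}(u)\ge\lambda b^{2_\gamma^*}/Q$ directly. Only the one-sided bound $a^2\le\lambda b^{2_\gamma^*}$ is needed after that. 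The paper gets it from convexity of $G$ and Fatou after testing with the weak limit, without knowing that the limit solves the equation. If $a>0$, then $\lambda b^{2_\gamma^*}\ge a^2\ge Sb^2$ forces $c\ge I_{K_u}(u)+S^{Q/2}/(Q\lambda^{(Q-2)/2})$, a contradiction.

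The slip in Step 1: the term $\varepsilon\|u_k\|_{p+1}^{p+1}$ is superquadratic in $\|u_k\|$, so it cannot be absorbed by $(\frac12-\frac1{p+1})\|u_k\|^2$. Use instead the pointwise bound $(s+u_0)^pu_0\le\varepsilon(s+u_0)^ps+C_\varepsilon$. Then absorb the $\varepsilon$-term via the inequality from $v=2u_k$ and Lemma~\ref{lem:properties_of_G}(iii), i.e.\ work with $\frac{1+\varepsilon}{p+1}<\frac12$, as the paper does.
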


\begin{proof}
Choose a sequence $\{v_k\} \subset D(I_{K_u})$ such that
\[
I_{K_u}(v_k) \to c
\quad \text{and} \quad
(1 + \|v_k\|)\, \|\partial^- I_{K_u}(v_k)\| \to 0 
\]
as $k \to \infty$. For every $k \in \mathbb{N}$, by definition there exists $\alpha_k \in \partial^- I_{K_u}(v_k)$ with $\|\alpha_k\| = \|\partial^- I_{K_u}(v_k)\|$.
By Lemma~\ref{lem:variational_inequality_characterization}, for every $k \in \mathbb{N}$ and every $w \in D(I_{K_u})$, we have $g(\cdot, v_k)(w - v_k) \in L^1(\Omega)$ and
\begin{equation}
\label{eq:ps_inequality_4_3}
\begin{aligned}
\langle \alpha_k, w - v_k \rangle
&\le \int_{\Omega} \gradgr v_k \cdot \gradgr (w - v_k)
   + \int_{\Omega} g(z, v_k)\, (w - v_k) - \lambda \int_{\Omega} (v_k + u_{0})^p (w - v_k).
\end{aligned}
\end{equation}
From Lemma~\ref{lem:properties_of_G}(i) and the assumption $G(\cdot, v_k) \in L^1(\Omega)$, we obtain $G(\cdot, 2v_k) \in L^1(\Omega)$, which implies $2v_k \in D(I_{K_u})$. Substituting $w = 2v_k$ into \eqref{eq:ps_inequality_4_3}, we get
\begin{equation}\label{eq:ineq1_cpc}
\langle \alpha_k, v_k \rangle
\le \int_{\Omega} |\gradgr v_k|^2\, dz
+ \int_{\Omega} g(z, v_k) v_k\, dz
- \lambda \int_{\Omega} (v_k + u_{0})^p v_k\, dz .
\end{equation}
Also, note that 
\begin{equation*}
\lim_{t \to \infty} \frac{\int_{\om} (t + u_0)^{p+1} dz}{\int_{\om} (t+u_0)^p t \, dz} = 1.
\end{equation*}
This gives for any $\varepsilon > 0$, there exists $M_{\varepsilon} > 0$ such that 
\begin{equation*}
    \int_{\om} (v_k + u_0)^{p+1} \, dz \leq (1 + \varepsilon) \int_{\om} (v_k + u_0)^{p} v_k \, dz + M_{\varepsilon}.
\end{equation*}
Thus using above, \eqref{eq:ineq1_cpc}, and the Lemma \ref{lem:properties_of_G}(iii), we deduce
\begin{align*}
    c  + 1 & \geq I(v_k)  \\
    & \geq  \frac{1}{2} \int_{\Omega} |\gradgr v_k|^2\, dz
+ \int_{\Omega} G(z, v_k)\, dz
- \frac{\lambda}{p+1} \int_{\Omega} (v_k + u_{0})^{p+1}\, dz \\
 & \geq \frac{1}{2} \int_{\Omega} |\gradgr v_k|^2\, dz
+ \int_{\Omega} G(z, v_k)\, dz - \frac{\lambda}{p+1} M_\varepsilon \\
&\quad + \left(\frac{1+\varepsilon}{p+1}\right) \left(
\langle \alpha_k, v_k \rangle
- \int_{\Omega} |\gradgr v_k|^2\, dz
- \int_{\Omega} g(z, v_k) v_k\, dz
\right)\\
& \geq \left( \frac{1}{2} - \left(\frac{1+\varepsilon}{p+1}\right) \right) \int_{\om} |\gradgr v_k|^2\, dz 
+ \left( \frac{1}{2} - \left(\frac{1+\varepsilon}{p+1} \right) \right) \int_{\Omega} g(z, v_k) v_k\, dz \\
& \quad  + (1 + \varepsilon) \frac{\langle \alpha_k, v_k \rangle}{p+1}.
\end{align*}
This yields that the sequence $\{v_k\}$ is bounded in $\spacegr$. Hence, up to a subsequence, we may assume that $v_k \rightharpoonup v$ weakly in $\spacegr$ and $v_k(z) \to v(z)$ a.e. in $\Omega$, and that
\[
\int_{\Omega} |\gradgr (v_k - v)|^2\, dz \to a^2,
\qquad
\int_{\Omega} |v_k - v|^{p+1}\, dz \to b^{p+1}.
\]
Moreover, convexity of $G$ and \eqref{eq:ps_inequality_4_3} with $w = v$ yields
\begin{equation*}
\begin{aligned}
&\int_{\Omega} G(z, v)\, dz
\ge \int_{\Omega} G(z, v_k)\, dz
+ \int_{\Omega} g(z, v_k) (v - v_k)\, dz \\
&\ge \int_{\Omega} G(z, v_k)\, dz
+ \int_{\Omega} \gradgr v_k \cdot \gradgr (v_k - v)\, dz - \lambda \int_{\Omega} (v_k + u_{0})^p (v_k - v)\, dz
- \langle \alpha_k, v_k - v \rangle.
\end{aligned}
\end{equation*}
From above and Proposition \ref{ineq:imp_ineq} we deduce that
\begin{equation*}
\int_{\Omega} G(z, v)\, dz
\ge \int_{\Omega} G(z, v)\, dz + a^2 - \lambda b^{p+1}.
\end{equation*}
This yields $\lambda b^{p+1} \ge a^2$. Since $b=0$ in the subcritical case, we have $a = 0$. Hence $I_{K_u}$ satisfies $(\mathrm{CPS})_c$.

We now consider the critical case. Since $u$ is a positive weak solution, by Lemma \ref{lem:weak_solution_identity} we have
\begin{equation}
\label{eq:critical_identity_4_4}
\int_{\Omega} \bigl(
\gradgr u \cdot \gradgr (v_k - u)
+ g(z, u) (v_k - u)
- \lambda (u + u_{0})^{2_{\gamma}^* - 1} (v_k - u)
\bigr)\, dz = 0 .
\end{equation}

From the facts that $G(\cdot, v_k), G(\cdot, 2v_k) \in L^1(\Omega)$ and $u \le 2v_k - u \le 2v_k$, we infer that $2v_k - u \in D(I_{K_u})$. Using \eqref{eq:critical_identity_4_4}, the inequality \eqref{eq:ps_inequality_4_3} with $w = 2v_k - u$, and Lemma \eqref{lem:properties_of_G}(ii), we obtain
\begin{equation*}
\begin{aligned}
I_{K_u}(v_k) - I_{K_u}(u)
&= \frac{1}{2} \int_{\Omega} |\gradgr v_k|^2\, dz
+ \int_{\Omega} G(z, v_k)\, dz
- \frac{\lambda}{2_{\gamma}^*} \int_{\Omega} |v_k + u_{0}|^{2_{\gamma}^*}\, dz \\
&\quad - \frac{1}{2} \int_{\Omega} |\gradgr u|^2\, dz
- \int_{\Omega} G(z, u)\, dz
+ \frac{\lambda}{2_{\gamma}^*} \int_{\Omega} |u + u_{0}|^{2_{\gamma}^*}\, dz \\
&\ge \int_{\Omega} \Bigl(
G(z, v_k) - G(z, u)
- \frac{1}{2} g(z, v_k)(v_k - u)
- \frac{1}{2} g(z, u)(v_k - u)
\Bigr)\, dz \\
&\quad + \lambda \int_{\Omega} \Bigl(
\frac{1}{2} |v_k + u_{0}|^{2_{\gamma}^* - 1} (v_k - u)
- \frac{1}{2_{\gamma}^*} |v_k + u_{0}|^{2_{\gamma}^*} \\
&\qquad\qquad\qquad
+ \frac{1}{2} |u + u_{0}|^{2_{\gamma}^* - 1} (v_k - u)
+ \frac{1}{2_{\gamma}^*} |u + u_{0}|^{2_{\gamma}^*}
\Bigr)\, dz
+ \frac{1}{2} \langle \alpha_k, v_k - u \rangle \\
&\ge \lambda \int_{\Omega} \Bigl(
\frac{1}{2} |v_k + u_{0}|^{2_{\gamma}^* - 1} (v_k - u)
- \frac{1}{2_{\gamma}^*} |v_k + u_{0}|^{2_{\gamma}^*} \\
&\qquad\qquad\qquad
+ \frac{1}{2} |u + u_{0}|^{2_{\gamma}^* - 1} (v_k - u)
+ \frac{1}{2_{\gamma}^*} |u + u_{0}|^{2_{\gamma}^*}
\Bigr)\, dz
+ \frac{1}{2} \langle \alpha_k, v_k - u \rangle .
\end{aligned}
\end{equation*}

Using Brezis-Lieb Lemma and the convexity of the map $\tau \mapsto |\tau + u_{0}(x)|^{2_{\gamma}^* - 1}$, we infer that
\begin{equation*}
\begin{aligned}
c - I_{K_u}(u)
&\ge \frac{\lambda b^{2_{\gamma}^*}}{Q}
+ \lambda \int_{\Omega} \Bigl(
\frac{1}{2} |v + u_{0}|^{2_{\gamma}^* - 1} (v - u)
- \frac{1}{2_{\gamma}^*} |v + u_{0}|^{2_{\gamma}^*} \\
&\qquad\qquad\qquad\qquad
+ \frac{1}{2} |u + u_{0}|^{2_{\gamma}^* - 1} (v - u)
+ \frac{1}{2_{\gamma}^*} |u + u_{0}|^{2_{\gamma}^*}
\Bigr)\, dz \\
&= \frac{\lambda b^{2_{\gamma}^*}}{Q}
+ \lambda \int_{\Omega} \left(
\frac{|v + u_{0}|^{2_{\gamma}^* - 1} + |u + u_{0}|^{2_{\gamma}^* - 1}}{2} (v - u)
- \int_{u}^{v} |\tau + u_{0}|^{2_{\gamma}^* - 1}\, d\tau
\right) dz \\
&\ge \frac{\lambda b^{2_{\gamma}^*}}{Q}.
\end{aligned}
\end{equation*}

If $a > 0$, then $\lambda b^{2_{\gamma}^*} \ge a^2$ and $a^2 \ge S b^2$ yield
\[
\frac{\lambda b^{2_{\gamma}^*}}{Q} \ge \frac{S^{Q/2}}{Q \lambda^{(Q-2)/2}},
\]
and hence we obtain
\[
c \ge I_{K_u}(u) + \frac{S^{Q/2}}{Q \lambda^{(Q-2)/2}},
\]
which contradicts the assumption $c < I_{K_u}(u) + \dfrac{S^{Q/2}}{Q \lambda^{(Q-2)/2}}$.
Therefore $a = 0$, and consequently $\{v_k\}$ converges to $v$ strongly in $\spacegr$.
Hence $I_{K_u}$ satisfies $(\mathrm{CPS})_c$. 
\end{proof}

Next, we show that in the critical case there exists a Cerami sequence at a level below the threshold established in the preceding lemma.
\begin{lemma}\label{lem:critical-case-psi}
In the critical regime, one can construct a nonnegative function $\Psi \in \spacegr$ such that the following bound holds:
\begin{equation*}
\sup\left\{ I_{K_u}(u + t\Psi) : t \ge 0 \right\}
<
I_{K_u}(u) + \frac{S^{Q/2}}{Q\,\lambda^{(Q-2)/2}}.
\end{equation*}
\end{lemma}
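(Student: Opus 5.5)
The plan is to take $\Psi = u_\varepsilon$, the truncated Grushin bubble from \eqref{eq:grushin_bubbles}. It is centred at a point $z_0$ with $B_{2R}(z_0)\Subset\Omega\setminus\Sigma$, and $\varepsilon>0$ will be chosen small. Since $u_\varepsilon\ge 0$, we have $u+tu_\varepsilon\in K_u$. Moreover Lemma \ref{lem:properties_of_G}(i) together with the convexity of $G$ gives $G(\cdot,u+tu_\varepsilon)\in L^1(\Omega)$, so $I_{K_u}(u+tu_\varepsilon)=I(u+tu_\varepsilon)$ is finite for all $t\ge0$.

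\textbf{Expansion.} We expand $I(u+tu_\varepsilon)-I(u)$ and use the weak formulation of Lemma \ref{lem:weak_solution_identity} with test function $w=tu_\varepsilon$. This cancels the linear term $t\int\nabla_\gamma u\cdot\nabla_\gamma u_\varepsilon$. Set $a=u+u_0$, which is bounded, and note $a\ge c_0>0$ on $B_{2R}(z_0)$ by the lower bound on $u_0$ from Theorem \ref{thm:singular-problem}. Then
\begin{equation*}
\begin{aligned}
I(u+tu_\varepsilon)-I(u)={}&\frac{t^2}{2}\|u_\varepsilon\|^2+\int_\Omega\bigl(G(z,u+tu_\varepsilon)-G(z,u)-g(z,u)tu_\varepsilon\bigr)\,dz\\
&-\frac{\lambda}{2_\gamma^*}\int_\Omega\bigl((a+tu_\varepsilon)^{2_\gamma^*}-a^{2_\gamma^*}-2_\gamma^*a^{2_\gamma^*-1}tu_\varepsilon\bigr)\,dz.
\end{aligned}
\end{equation*}

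\textbf{Estimating the pieces.} The $G$-term is bounded by $\tfrac12\sup_{s}g'(z,s)\,t^2u_\varepsilon^2$. Here $g'(z,s)=\delta(s+u_0)^{-\delta-1}\le\delta u_0^{-\delta-1}$ for $s\ge0$, and this is bounded on the support of $u_\varepsilon$. Hence the $G$-term is at most $Ct^2\int u_\varepsilon^2$. For the critical term we use the elementary inequality
\[
(a+b)^q-a^q-qa^{q-1}b\ge b^q+c\,a\,b^{q-1},
\qquad q=2_\gamma^*>2,\ a\ge c_0 \text{ on the support of } b.
\]
This gives a lower bound $\tfrac{\lambda t^{2_\gamma^*}}{2_\gamma^*}\int u_\varepsilon^{2_\gamma^*}+c\,t^{2_\gamma^*-1}\int u_\varepsilon^{2_\gamma^*-1}$. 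Collecting terms,
\[
I(u+tu_\varepsilon)-I(u)\le \frac{t^2}{2}\|u_\varepsilon\|^2-\frac{\lambda t^{2_\gamma^*}}{2_\gamma^*}\|u_\varepsilon\|_{2_\gamma^*}^{2_\gamma^*}+Ct^2\|u_\varepsilon\|_2^2-c\,t^{2_\gamma^*-1}\int_\Omega u_\varepsilon^{2_\gamma^*-1}\,dz.
\]
The right side tends to $-\infty$ as $t\to\infty$ and is small for small $t$. So the supremum is attained at some $t_\varepsilon\in[t_1,t_2]$ with bounds independent of $\varepsilon$.

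\textbf{Bubble estimates.} We insert Lemma \ref{lem:blow_up_grushin}(i),(ii). Maximizing $\tfrac{t^2}{2}A-\tfrac{\lambda t^{2_\gamma^*}}{2_\gamma^*}B$ gives $\tfrac1Q (A/B^{2/2_\gamma^*})^{Q/2}\lambda^{-(Q-2)/2}$. With $A=S^{Q/2}+O(\varepsilon^{Q-2})$ and $B=S^{Q/2}+O(\varepsilon^Q)$, this equals $\tfrac{S^{Q/2}}{Q\lambda^{(Q-2)/2}}+O(\varepsilon^{Q-2})$. The remaining work is to show that the negative term $c\int u_\varepsilon^{2_\gamma^*-1}$ beats both $O(\varepsilon^{Q-2})$ and $C\|u_\varepsilon\|_2^2$.

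\textbf{Main obstacle.} This comparison is the crucial step. Using the asymptotics \eqref{eq:asymptotic_estimate_fundamental} and the polar formula, on the ball $B_R$ we have $v_\varepsilon\sim\varepsilon^{(Q-2)/2}/(\varepsilon^2+d^2)^{(Q-2)/2}$ after scaling. A direct computation then gives $\int u_\varepsilon^{2_\gamma^*-1}\ge C\varepsilon^{(Q-2)/2}$. On the other hand, $\|u_\varepsilon\|_2^2$ is $O(\varepsilon^2)$, $O(\varepsilon^2|\ln\varepsilon|)$ or $O(\varepsilon^{Q-2})$ according to the case split of Lemma \ref{lem:blow_up_grushin}(iii), used here as upper bounds via the same computation. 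Since $(Q-2)/2<\min\{2,\,Q-2\}$ for $Q>3$, the negative term dominates. The low-dimensional case $Q\le 3$ needs care: there I would instead compare directly with $\varepsilon^{Q-2}$ via $(Q-2)/2<Q-2$ and note $\|u_\varepsilon\|_2^2=O(\varepsilon^{Q-2})$. Choosing $\varepsilon$ small then yields the strict inequality.
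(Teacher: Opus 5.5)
There is a genuine gap, and it sits in the comparison you call the main obstacle, but in high rather than low dimension. Your bound $\int_\Omega\bigl(G(z,u+tu_\varepsilon)-G(z,u)-g(z,u)tu_\varepsilon\bigr)\,dz\le Ct^2\|u_\varepsilon\|_2^2$ loses too much. For $Q>4$, Lemma~\ref{lem:blow_up_grushin}(iii) (a \emph{lower} bound) gives $\|u_\varepsilon\|_2^2\ge C\varepsilon^2$, whereas your gain is only $\int u_\varepsilon^{2_\gamma^*-1}\sim\varepsilon^{(Q-2)/2}$. The inequality $(Q-2)/2<2$ that you invoke holds only for $Q<6$, not ``for $Q>3$''. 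At $Q=6$ the two terms have the same order, and for $Q>6$ the positive term $Ct^2\|u_\varepsilon\|_2^2$ dominates, so your estimate cannot give the strict inequality. Since $Q=m+(1+\gamma)n$ can be arbitrarily large, this case cannot be set aside. Conversely, the case $Q\le3$ that you single out causes no trouble.

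The missing idea is that $g(z,\cdot)$ is bounded above by $u_0^{-\delta}$. Hence the increment $\int_r^{r+s}\bigl(g(z,\tau)-g(z,r)\bigr)\,d\tau$ grows only linearly for large $s$. On $\operatorname{supp}\zeta$ it is at most $C\min\{s,s^2\}\le\alpha s^q$ for any $q\in[1,2]$. The paper uses exactly this in \eqref{eq:G-alpha-estimate}, with $1<q<\min\{2,Q/(Q-2)\}$. Then \eqref{eq:psi-delta-bound} makes the $G$-contribution $O(\varepsilon^{q(Q-2)/2})=o(\varepsilon^{(Q-2)/2})$ in every dimension.

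A second, smaller point concerns your pointwise inequality $(a+b)^{2_\gamma^*}-a^{2_\gamma^*}-2_\gamma^*a^{2_\gamma^*-1}b\ge b^{2_\gamma^*}+c\,a\,b^{2_\gamma^*-1}$ for all $b\ge0$. It is false when $2_\gamma^*<3$, i.e.\ $Q>6$: as $b\to0$ the left side is of order $b^2$, while $b^{2_\gamma^*-1}\gg b^2$. This is why the paper uses the refined inequality \eqref{eq:power-inequality-refined} only for $s\ge1$ and $r\le M$. It applies it on the concentration region, where $t\Psi_\varepsilon\ge1$ once $t$ is bounded below. Small $t$ is handled separately with \eqref{eq:power-inequality-basic}.

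With these two repairs, your scheme coincides with the paper's proof: expansion around $u$ via the weak formulation, the bubble estimates, and maximisation in $t$.
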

\begin{proof}
We take $\Psi_{\varepsilon} = u_{\varepsilon} = \zeta v_{\varepsilon}$ where $u_{\varepsilon}$ defined as in \eqref{eq:grushin_bubbles}. Fix $q$ such that $1 < q < \min\{2,\, Q/(Q-2)\}$. Set $\eta_\zeta = \sup\{ |x| : x \in \operatorname{supp}\zeta \}$. Then there exist $m, M > 0$ such that $m \leq u(z) \leq M$ for all $z \in \eta_{\zeta}$.

 
Using the upper bound in \eqref{eq:asymptotic_estimate_fundamental}, we obtain the estimate
\begin{equation}\label{eq:psi-delta-bound}
\begin{aligned}
\int_{\Omega} |\Psi_\varepsilon|^q \, dz \leq \varepsilon^{q(2-Q)/2} \int_{B_{\eta_{\zeta}}}d\left( \frac{z}{\varepsilon} \right)^{q(2-Q)} dz
\leq C_2 \varepsilon^{q\frac{2-Q}{2} + Q}\int_{B_{\eta_{\zeta}/\varepsilon}} d(z)^{q(2-Q)} dz
\le C_3 \, \varepsilon^{\frac{(Q-2)q}{2}}. 
\end{aligned}
\end{equation}
In view of representation formula in Section $3.1$ and Remark $2.3$ of \cite{ambrosio_representation_formula}, there exists $C>0$ such that $U(z) \geq C$ for all $d(z) \leq 1$. This yields
\begin{equation}\label{eq:psi_subcritical_lower}
\int_{|x| \le \varepsilon} |\Psi_\varepsilon|^{2_{\gamma}^* - 1} \, dz 
\geq \varepsilon^{-\frac{(Q+2)}{2}} \int_{|z|\leq \varepsilon} |U(z/\varepsilon)|^{\frac{Q+2}{Q-2}} dz \geq C \varepsilon^{-\frac{(Q+2)}{2}} \int_{|z|\leq \varepsilon} dz
\ge C_4 \, \varepsilon^{\frac{Q-2}{2}}.
\end{equation}
Next, observe that
\begin{equation*}
\begin{aligned}
G(z, r+s) - G(z, r) - g(z, r)s
&= \int_r^{r+s} \big( g(z,t) - g(z, r) \big) \, dt \\
&= \int_r^{r+s} \left( (r + \tilde u(z))^{-\delta} - (t + \tilde u(z))^{-\delta} \right) dt \\
&\le \int_r^{r+s} \left( r^{-\delta} - t^{-\delta} \right) dt .
\end{aligned}
\end{equation*}
From this estimate and mean value theorem, there exists a constant $\alpha > 0$ such that
\begin{equation}\label{eq:G-alpha-estimate}
G(x, r+s) - G(x, r) - g(x, r)s \le \alpha s^q,
\quad \text{for all } z \in \Omega,\; r \ge m >0,\; s \ge 0 .
\end{equation}
Moreover, one readily checks that
\begin{equation}\label{eq:power-inequality-basic}
\frac{(r+s)^{2_{\gamma}^*}}{ 2_{\gamma}^*} - \frac{r^{2_{\gamma}^*}}{2_{\gamma}^*} - r^{2_{\gamma}^* - 1} s \ge \frac{s^{2_{\gamma}^*}}{2_{\gamma}^*},
\quad \text{for all } r, s \ge 0,
\end{equation}
and by Taylor's expansion and mean value theorem, there exists a constant $\beta > 0$ such that
\begin{equation}\label{eq:power-inequality-refined}
\frac{(r+s)^{2_{\gamma}^*}}{2_{\gamma}^*} - \frac{r^{2_{\gamma}^*}}{2_{\gamma}^*} - r^{2_{\gamma}^* - 1} s
\ge \frac{s^{2_{\gamma}^*}}{2_{\gamma}^*} + \beta \frac{r s^{2_{\gamma}^* - 1}}{2_{\gamma}^* - 1},
\quad \text{for all } 0 \le r \le M \text{ and } s \ge 1 .
\end{equation}
Since $u$ is a positive weak solution of the problem \eqref{eq:shifted_singular_problem}, we have
\begin{equation}\label{eq:IK-difference-start}
\begin{aligned}
I_{K_u}&(u + t\Psi_\varepsilon) - I_{K_u}(u) \\
&= I_{K_u}(u + t\Psi_\varepsilon) - I_{K_u}(u)
- t \int_{\Omega} \big( \nabla u \cdot \nabla \Psi_\varepsilon + g(z,u)\Psi_\varepsilon - \lambda (u + u_{0})^{2_{\gamma}^*-1} \Psi_\varepsilon \big)\, dz \\
&= \frac{t^2}{2} \int_{\Omega} |\nabla \Psi_\varepsilon|^2 \, dz
+ \int_{\Omega} \big( G(z, u + t\Psi_\varepsilon) - G(z, u) - g(z,u)(t\Psi_\varepsilon) \big)\, dz \\
&\quad - \lambda \int_{\Omega} \left(
\frac{1}{2_{\gamma}^*} |u + t\Psi_\varepsilon + u_{0}|^{2_{\gamma}^*}
- \frac{1}{2_{\gamma}^*} |u + u_{0}|^{2_{\gamma}^*}
- (u + u_{0})^{2_{\gamma}^*-1} (t\Psi_\varepsilon)
\right) dz .
\end{aligned}
\end{equation}
Using \eqref{eq:power-inequality-basic}, \eqref{eq:G-alpha-estimate}, \eqref{eq:psi-delta-bound} and estimates $(i)$ and $(ii)$ of Lemma \ref{lem:blow_up_grushin} in \eqref{eq:IK-difference-start}, we infer that
\begin{equation}\label{eq:IK-upper-bound-small-t}
\begin{aligned}
I_{K_u}(u + t\Psi_\varepsilon) - I_{K_u}(u)
&\le \frac{t^2}{2} \int_{\Omega} |\nabla \Psi_\varepsilon|^2 \, dz
- \frac{\lambda t^{2_{\gamma}^*}}{2_{\gamma}^*} \int_{\Omega} |\Psi_\varepsilon|^{2_{\gamma}^*} \, dz
+ \alpha t^q \int_{\Omega} |\Psi_\varepsilon|^q \, dz \\
&\le \frac{t^2}{2} \big( S^{Q/2} + C_1 \varepsilon^{Q-2} \big)
- \frac{\lambda t^{2_{\gamma}^*}}{2_{\gamma}^*} \big( S^{Q/2} - C_2 \varepsilon^{Q} \big)
+ \alpha C_3 t^q \varepsilon^{\frac{(Q-2)q}{2}},
\end{aligned}
\end{equation}
for all $0 \le t < \lambda^{-4/(Q-2)}/2$.

Next, since we may assume that $t\Psi_\varepsilon(x) \ge 1$ for every $t \ge \lambda^{-4/(Q-2)}/2$ and $|z| \le \varepsilon$, using \eqref{eq:power-inequality-refined}, \eqref{eq:G-alpha-estimate}, \eqref{eq:psi-delta-bound}, \eqref{eq:psi_subcritical_lower} and estimates $(i)$ and $(ii)$ of Lemma \ref{lem:blow_up_grushin} in \eqref{eq:IK-difference-start}, we obtain
\begin{equation}\label{eq:IK-upper-bound-large-t}
\begin{aligned}
I_{K_u}&(u + t\Psi_\varepsilon) - I_{K_u}(u)\\
&\le \frac{t^2}{2} \int_{\Omega} |\nabla \Psi_\varepsilon|^2 \, dz
- \frac{\lambda t^{2_{\gamma}^*}}{2_{\gamma}^*} \int_{\Omega} |\Psi_\varepsilon|^{2_{\gamma}^*} \, dz
- \frac{\lambda \beta c_4 t^{2_{\gamma}^*-1}}{2_{\gamma}^* - 1} \int_{|z| \le \varepsilon} |\Psi_\varepsilon|^{2_{\gamma}^*-1} \, dz
+ \alpha t^q \int_{\Omega} |\Psi_\varepsilon|^q \, dz \\
&\le \frac{t^2}{2} \big( S^{Q/2} + C_1 \varepsilon^{Q-2} \big)
- \frac{\lambda t^{2_{\gamma}^*}}{2_{\gamma}^*} \big( S^{Q/2} - C_2 \varepsilon^{Q} \big)
- \frac{\lambda \beta C_4 t^{2_{\gamma}^*-1}}{2_{\gamma}^* - 1} \varepsilon^{\frac{Q-2}{2}}
+ \alpha C_3 t^q \varepsilon^{\frac{(Q-2)q}{2}},
\end{aligned}
\end{equation}
for all $t \ge \lambda^{-4/(Q-2)}/2$.

Define the function $j_\varepsilon : [0, \infty) \to \mathbb{R}$ by the right-hand sides of
\eqref{eq:IK-upper-bound-small-t} on $[0, \lambda^{-4/(Q-2)}/2)$ and of \eqref{eq:IK-upper-bound-large-t} on
$[\lambda^{-4/(Q-2)}/2, \infty)$. A direct computation shows that $j_\varepsilon$ attains its maximum at
\[
t = \lambda^{-4/(Q-2)} \left( 1 - \frac{\beta c_4 \varepsilon^{(Q-2)/2}}{(2_{\gamma}^* - 2) S^{Q/2}} \right)
+ o\big( \varepsilon^{(Q-2)/2} \big),
\]
and therefore
\begin{equation*}
\sup_{t \ge 0} \big( I_{K_u}(u + t\Psi_\varepsilon) - I_{K_u}(u) \big)
\le
\frac{S^{Q/2}}{Q \lambda^{(Q-2)/2}}
- \frac{\beta c_4}{(2_{\gamma}^* - 1)\lambda^{(Q-2)/4}} \varepsilon^{\frac{Q-2}{2}}
+ o\!\left( \varepsilon^{\frac{Q-2}{2}} \right)
<
\frac{S^{Q/2}}{N \lambda^{(Q-2)/2}} .
\end{equation*}
This completes the proof.
\end{proof}

\begin{proposition}
In both the subcritical and the critical regimes, for every $\lambda \in (0,\Lambda)$, problem \eqref{eq:shifted_singular_problem} admits a second positive weak solution.
\end{proposition}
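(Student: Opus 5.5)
Fix $\lambda\in(0,\Lambda)$ and let $u=u_\lambda$ be the first solution from Lemma \ref{lem:existence_critical_firstsoln}. The second solution will come from the linking Theorem \ref{thm:Linking_theorem} with $N=1$, that is, a mountain pass, applied to $I_{K_u}=I_0+I_1$ on $H=\spacegr$. Here $I_0(v)=\frac12\|v\|^2+\int_\Omega G(z,v)\,dz$ plus the indicator of $K_u$; it is convex and lower semicontinuous by (A2). The term $I_1(v)=-\frac{\lambda}{p+1}\int_\Omega (v+u_0)^{p+1}dz$ is $C^1$, since $p\le 2_\gamma^*-1$. Any critical point $w$ with $0\in\partial^-I_{K_u}(w)$ and $w\ge u$ solves \eqref{eq:shifted_singular_problem} by Proposition \ref{prop:sub_super_solution_criterion}(i), because $u$ is a (sub)solution and $G(\cdot,v)\in L^1_{loc}$ for $v\ge u$. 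Then $w+u_0$ is a positive weak solution of \eqref{eq:singular_semilinear_problem}. It therefore suffices to produce $w\neq u$.

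\textbf{Step 1 (geometry).} First I would use that $u$ is a local minimizer of $I_{K_0}$, hence of $I_{K_u}$. Suppose $u$ is not a strict local minimizer, i.e.\ every small sphere contains points $v\ne u$ with $I_{K_u}(v)\le I_{K_u}(u)$. Then one gets infinitely many critical points near $u$; the standard Hirano–Saccon–Shioji dichotomy would give this via Ekeland's principle together with $(\mathrm{CPS})$ at level $I_{K_u}(u)$ from Proposition \ref{prop:CPS_condition}. In that case a second solution exists and we are done. Otherwise there is $\rho>0$ such that
\begin{equation*}
\inf\{I_{K_u}(v): v\in K_u,\ \|v-u\|=\rho\}\ge I_{K_u}(u),
\end{equation*}
and we take $A=\{v\in K_u:\|v-u\|=\rho\}\cap\mathcal{D}(I)$. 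In the subcritical case one may instead rely on the strict estimate \eqref{eq:strict_local_minimum_estimate}.

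\textbf{Step 2 (endpoint and path class).} Choose $\psi(-1)=u$ and $\psi(1)=u+T\Psi$ with $\Psi\ge0$; in the critical case $\Psi=\Psi_\varepsilon$ comes from Lemma \ref{lem:critical-case-psi}, and in the subcritical case any $\Psi\ge0$, $\Psi\not\equiv0$ will do. The map $t\mapsto I_{K_u}(u+t\Psi)$ tends to $-\infty$ because $p+1>2$ and, by Lemma \ref{lem:properties_of_G}(i), $G$ grows at most quadratically. So $T$ can be taken large with $\|T\Psi\|>\rho$ and $I_{K_u}(u+T\Psi)<I_{K_u}(u)$. The segment lies in $\mathcal{D}(I)$ because $G(\cdot,u+t\Psi)\le G^+(\cdot,u)+G^+(\cdot,u+T\Psi)$ by Lemma \ref{lem:property_G_1}, so $\Phi\neq\varnothing$. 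Every path from $u$ to $u+T\Psi$ inside the convex set $K_u$ crosses the sphere $A$ by connectedness, and $\inf I(A)\ge\sup I(\psi(\mathbb S^0))$.

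\textbf{Step 3 (level and compactness).} The minimax level satisfies $c\ge I_{K_u}(u)$, and using the segment path as a competitor gives
\begin{equation*}
c\le\sup_{t\ge0}I_{K_u}(u+t\Psi)<I_{K_u}(u)+\frac{S^{Q/2}}{Q\lambda^{(Q-2)/2}}
\end{equation*}
in the critical case, by Lemma \ref{lem:critical-case-psi}. Proposition \ref{prop:CPS_condition} then gives $(\mathrm{CPS})_c$, and Theorem \ref{thm:Linking_theorem} yields a critical point $w$ with $I(w)=c$. If $c>I_{K_u}(u)$ then $w\ne u$. If $c=I_{K_u}(u)=\inf I(A)$, the theorem places $w\in A$, so $\|w-u\|=\rho$ and again $w\neq u$.

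\textbf{Main obstacle.} The delicate point is Step 1, namely handling the degenerate case in which the local minimum is not strict. This requires running the deformation and Ekeland argument in the nonsmooth constrained setting while keeping the $(\mathrm{CPS})$ level below the critical threshold. Secondary checks are that $\Psi_\varepsilon$, supported away from $\Sigma$, is compatible with the bounds $m\le u\le M$ used in Lemma \ref{lem:critical-case-psi}, and that the boundary behaviour of the solution follows from $w\in\spacegr$.
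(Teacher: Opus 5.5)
Your argument is essentially the paper's proof. It is a one-dimensional application of Theorem~\ref{thm:Linking_theorem} to $I_{K_u}$, with $A$ the sphere of radius $\rho$ about $u$ and endpoint $u+t_0w$. Compactness $(\mathrm{CPS})_c$ comes from Proposition~\ref{prop:CPS_condition}, with Lemma~\ref{lem:critical-case-psi} keeping $c$ below the threshold in the critical case, and the theorem's final clause handles $c=I_{K_u}(u)$. Two corrections follow; neither changes the structure.

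First, your Step~1 dichotomy is superfluous, and it is not really the main obstacle. Local minimality alone gives $\inf I_{K_u}(A)\ge I_{K_u}(u)=\sup I_{K_u}(\psi(\mathbb{S}^0))$, which is all the linking theorem requires. The degenerate case $c=I_{K_u}(u)=\inf I_{K_u}(A)$ is exactly what its ``Moreover'' clause covers, and you already use that clause in Step~3. So no separate Ekeland or deformation argument about non-strict minima is needed.

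Second, in the subcritical case it is not true that every $\Psi\ge0$ works. Since $g(z,\cdot)\ge0$ on $[0,\infty)$, one has $G(z,u+T\Psi)\ge G(z,T\Psi)$. For strongly singular $\delta$ (e.g.\ $\delta\ge3$), $G(\cdot,T\Psi)$ need not be integrable for a general $\Psi\in\spacegr$, so the endpoint can leave $\mathcal{D}(I_{K_u})$. The paper avoids this by taking $w=u$, for which Lemma~\ref{lem:properties_of_G}(i) gives $G(\cdot,(1+t)u)\le(1+t)^2G(\cdot,u)\in L^1(\Omega)$. More generally, any $\Psi$ with $G(\cdot,\Psi)\in L^1(\Omega)$ works, for instance a compactly supported one like $\Psi_\varepsilon$. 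For such $\Psi$, use $G(z,u+t\Psi)\le\frac12G(z,2u)+\frac12G(z,2t\Psi)$ together with Lemma~\ref{lem:properties_of_G}(i). This gives both that the segment lies in the domain and the quadratic bound that makes $I_{K_u}(u+t\Psi)\to-\infty$.
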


\begin{proof}
In the critical case, let $\Psi$ be as constructed in Lemma~\ref{lem:critical-case-psi}. Define
\begin{equation*}
w =
\begin{cases}
u, & \text{in the subcritical case}, \\
\Psi, & \text{in the critical case}.
\end{cases}
\end{equation*}
Since $u$ is a local minimizer of $I_{K_u}$, there exists $\rho > 0$ such that $I_{K_u}(v) \ge I_{K_u}(u)$ for every $ v \in K_u $ with $\|v-u\| \le \rho$. Moreover, using Lemma \ref{lem:properties_of_G}(i), we know that
\begin{equation*}
I_{K_u}(u + t w) \to -\infty
\quad \text{as } t \to \infty .
\end{equation*}
Hence, one can select $t_0 > \rho/\|w\|$ such that
\begin{equation*}
I_{K_u}(u + t_0 w) \le I_{K_u}(u).
\end{equation*}
In order to apply Theorem \ref{thm:Linking_theorem}, we introduce the class of admissible paths
\begin{equation*}
\Phi
=
\left\{
\varphi \in C([0,1], D(I_{K_u})) :
\varphi(0)=u,\;
\varphi(1)=u+t_0w
\right\},
\end{equation*}
and define
\begin{equation*}
A
=
\left\{
v \in D(I_{K_u}) : \|v-u\|=\rho
\right\},
\qquad
c
=
\inf_{\varphi \in \Phi}
\sup_{0 \le s \le 1}
I_{K_u}(\varphi(s)).
\end{equation*}
From the choice of $t_0$, we note that $A \cap \varphi(1) = \varnothing $. By the Lemma \ref{lem:critical-case-psi} together with Proposition \ref{prop:CPS_condition}, the functional $I_{K_u}$ satisfies the condition $(\mathrm{CPS})_c$. 
If $c > I_{K_u}(u)$, then by Theorem \ref{thm:Linking_theorem} we are done. 
If $c = I_{K_u}(u)$, we observe that $u \notin A$, $u+t_0w \notin A$, and since
\begin{equation*}
\inf I_{K_u}(A) \ge c = I_{K_u}(u) \ge I_{K_u}(u+tw),
\end{equation*}
and for every $\varphi \in \Phi$ there exists $s \in [0,1]$ such that $\|\varphi(s)-u\| = \rho $, and thus $c \geq I_{K_u}(A)$.
Therefore, by Theorem \ref{thm:Linking_theorem}, there exists $v \in A \cap D(I_{K_u})$ with $v \neq u$ such that
\begin{equation*}
I_{K_u}(v) = c,
\qquad
0 \in \partial^- I_{K_u}(v).
\end{equation*}
Finally, by Proposition \ref{prop:sub_super_solution_criterion}(i), the function $v$ is a positive weak solution of problem \eqref{eq:shifted_singular_problem}.
\end{proof}
Note that the next lemma follows in the similar way as of Lemma $8$ of \cite{hirano_fully_singular}.
\begin{lemma}
Let $v$ be a positive weak solution of \eqref{eq:shifted_singular_problem} such that $v \ge u$, $v \neq u$, and $v \in L^\infty(\Omega)$. Then
\begin{equation*}
I_{K_u}^v(v) > I_{K_u}^v(u).
\end{equation*}
\end{lemma}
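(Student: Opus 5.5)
Recall the setting. $u$ is the minimal solution from Lemma \ref{lem:existence_critical_firstsoln}. The functional $I^v_{K_u}$ means $I_{K_u^v}$, i.e. $I$ restricted to $K_u^v=\{w\in\spacegr: u\le w\le v\}$. Since $v\in L^\infty(\Omega)$ and $u\le v$, the whole order interval sits in $L^\infty$, so all integrals are finite and $I$ is finite on $K_u^v$.

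\textbf{Step 1: find a minimizer.} I would minimize $I_{K_u^v}$ over $K_u^v$. This set is convex, weakly closed and bounded in $L^\infty$, so the direct method works, as in the proof of Theorem \ref{thm:existence_between_sub_super}. The map $w\mapsto \int G(z,w)$ is weakly l.s.c. by convexity, and the term $\int (w+u_0)^{p+1}$ is continuous along a.e.-convergent sequences by dominated convergence. Hence a minimizer $w_0$ exists.

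\textbf{Step 2: identify $u$ as the global minimizer.} Suppose the claim fails, so that $I(v)\le I(u)$. I would show that $\inf_{K_u^v} I$ is attained at $u$, and argue as follows.
\begin{enumerate}
\item[(a)] By Lemma \ref{lem:existence_critical_firstsoln}, $u$ is a local minimizer of $I_{K_0^{v}}$, and a fortiori of $I_{K_u^v}$.
\item[(b)] By Proposition \ref{prop:sub_super_solution_criterion}(iii), with $u$ as subsolution and $v$ as supersolution, $w_0$ is a weak solution lying between $u$ and $v$.
\item[(c)] The minimality property (i) of $u$ then forces either $w_0=u$ or $w_0>u$.
\end{enumerate}

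\textbf{Step 3: contradiction via the scaling path, following Lemma 8 of \cite{hirano_fully_singular}.} Assume $I(v)\le I(u)$. I would consider the path $\sigma(t)=u+t(v-u)$, $t\in[0,1]$, and the function $h(t)=I(\sigma(t))$. Testing the equations satisfied by $u$ and by $v$ (Lemma \ref{lem:weak_solution_identity}) against $v-u$ gives the following.
\begin{itemize}
\item $h'(0)=0$ and $h'(1)=0$, in the one-sided sense.
\item Writing $\phi=v-u$, the second derivative is
\begin{equation*}
h''(t)=\|\phi\|^2+\int_\Omega g_s(z,\sigma)\phi^2\,dz-\lambda p\int_\Omega(\sigma+u_0)^{p-1}\phi^2\,dz .
\end{equation*}
\end{itemize}
The map $s\mapsto g(z,s)$ is concave in $s$, since $-(s+u_0)^{-\delta}$ is concave. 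The map $s\mapsto \lambda(s+u_0)^p$ is convex. Together these make $t\mapsto h'(t)$ strictly concave on $[0,1]$, strictly because $\phi\neq0$ and $p>1$. A strictly concave function vanishing at $t=0$ and $t=1$ is positive on $(0,1)$. Therefore $h$ is strictly increasing, which gives $h(1)>h(0)$, i.e. $I(v)>I(u)$. This step-3 argument proves the claim directly, with no contradiction needed.

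\textbf{Main obstacle.} The hard part is justifying the differentiation under the integral near the singular term, namely that $g(z,\sigma(t))\phi$ is integrable uniformly in $t$. Because $\sigma\ge u\ge0$, we have $g(z,\sigma)\ge 0$ and $g(z,\sigma)\le u_0^{-\delta}$. I would use the convexity inequalities directly, rather than pointwise derivatives, and handle the integrability via Lemma \ref{lem:weak_solution_identity} applied to both $u$ and $v$, plus monotone convergence.
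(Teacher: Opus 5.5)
Your Step 3 is correct and is essentially the argument the paper defers to (Lemma 8 of \cite{hirano_fully_singular}): strict concavity of $h'$ with $h'(0)=h'(1)=0$ amounts to the trapezoid inequality of Lemma \ref{lem:properties_of_G}(ii) plus convexity of $\tau\mapsto(\tau+u_0)^p$, combined with the weak identities for $u$ and $v$ tested against $v-u$ from Lemma \ref{lem:weak_solution_identity}. Steps 1--2 play no role and should be dropped, and the integrability you need comes from monotonicity, namely $0\le g(z,\sigma)\phi\le g(z,v)\phi\in L^1(\Omega)$ and $0\le G(z,\sigma)\le G(z,v)\le g(z,v)v\in L^1(\Omega)$, not from $v\in L^\infty(\Omega)$ or from the bound $g\le u_0^{-\delta}$.
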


\begin{lemma}
Let $w$ be a positive weak solution of \eqref{eq:shifted_singular_problem}$_\mu$ with $w \in L^\infty(\Omega)$ and $\mu \ge \lambda$. Then
\begin{equation*}
w > u \quad \text{in } \Omega .
\end{equation*}
\end{lemma}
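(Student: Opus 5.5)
The plan is to use $w$ as a supersolution for the $\lambda$-problem, to produce between $0$ and $\min\{w,\cdot\}$ a minimizer that coincides with $u$, and then to upgrade $w\ge u$ to strict inequality by the strong maximum principle. Here $u=u_\lambda$ is the first solution built in Lemma \ref{lem:existence_critical_firstsoln}.

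\textbf{Step 1: $w$ is a supersolution.} Since $\mu\ge\lambda$ and $w+u_0>0$, we have
\[
-\Gr w+g(z,w)=\mu(w+u_0)^p\ge\lambda(w+u_0)^p .
\]
So $w$ is a supersolution of \eqref{eq:shifted_singular_problem}$_\lambda$, strict if $\mu>\lambda$ (Lemma 5(iii)). Zero is a strict subsolution by Lemma 5(i).

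\textbf{Step 2: comparison of $u$ with $w$.} First I would show $u\le w$. Consider $\tilde w:=\min\{u,w\}$. The minimum of two supersolutions is a supersolution, because the nonlinearity $-g(z,s)+\lambda(s+u_0)^p$ is Carath\'eodory and the Kato-type argument carries over using Lemma \ref{lem:variational-identity}. Hence $\tilde w$ is a supersolution lying in $L^\infty$, and $0\le\tilde w\le u$.

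Apply Theorem \ref{thm:existence_between_sub_super}, or Theorem \ref{thm:local_minimizer_from_supersolution_supercritical} in the supercritical case, with $\varphi_1=0$ and $\varphi_2=\tilde w$. This gives a minimizer $\hat u$ of $I$ on $K_0^{\tilde w}$, and $\hat u$ is a weak solution with $0\le\hat u\le\tilde w\le u$.

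To conclude $\hat u=u$, I would use the construction of $u_\lambda$: it can be taken as the minimal solution, i.e. the minimizer on $K_0^{\varphi_2}$ for a supersolution obtained from the sup over admissible supersolutions. If Lemma \ref{lem:existence_critical_firstsoln} does not literally give minimality, I would redefine $u_\lambda$ as the minimizer of $I$ on $K_0^{w}$ for every such $w$. By uniqueness-type arguments (Lemma \ref{lem:properties_of_G}(ii) gives a convexity-like inequality yielding at most one minimizer below any supersolution, following Lemma 8 of \cite{hirano_fully_singular}), this minimizer is independent of $w$. The preceding lemma, $I^v_{K_u}(v)>I^v_{K_u}(u)$ for solutions $v\ge u$, $v\ne u$, is the tool for that uniqueness. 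Either way $u=\hat u\le w$.

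\textbf{Step 3: strict inequality.} Set $h=\dfrac{g(z,w)-g(z,u)}{w-u}$ on $\{w>u\}$ and $h=0$ otherwise. Since $g(z,\cdot)$ is increasing, $h\ge0$. Because $u,w\ge0$ are bounded and $u_0$ is locally bounded below, $h\in L^\infty_{loc}$. Then
\[
-\Gr(w-u)+h\,(w-u)=\mu(w+u_0)^p-\lambda(u+u_0)^p\ge0 .
\]
By Lemma \ref{lem:SMP_grushin} (with Remark (i)), either $w>u$ in $\Omega$ or $w\equiv u$.

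\textbf{Step 4: ruling out $w\equiv u$.} If $\mu>\lambda$, then $w\equiv u$ is impossible, since the right-hand side above would be strictly positive while the left side vanishes. If $\mu=\lambda$ and $w\ne u$, the maximum principle already gives $w>u$.

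The main obstacle is Step 2, namely identifying the minimizer below $\min\{u,w\}$ with $u$ itself; that is where the minimality and uniqueness structure must be used carefully.
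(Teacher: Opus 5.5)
Your proposal is correct and follows the paper's route. The paper obtains $w\ge u$ by appeal to Lemma 9 of \cite{hirano_fully_singular}, which is the same $\min\{u,w\}$-supersolution / constrained-minimizer / energy-comparison mechanism you describe. It then upgrades to $w>u$ with the strong maximum principle (Lemma \ref{lem:SMP_grushin}) and a locally bounded $h\ge 0$, exactly as in your Steps 3--4; you are also right that for $\mu=\lambda$ the case $w=u$ has to be excluded.

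In Step 2 no redefinition of $u_\lambda$ is needed. By construction $u$ minimizes $I$ on $K_0^{\varphi_2}\supseteq K_0^{\min\{u,w\}}\ni\hat u$, so $I(u)\le I(\hat u)$. On the other hand, the strict energy inequality for ordered distinct solutions forces $I(\hat u)<I(u)$ unless $\hat u=u$. That inequality is the preceding lemma, and its proof via Lemma \ref{lem:properties_of_G}(ii) and convexity of $\tau\mapsto(\tau+u_0)^p$ works for any pair of solutions $\hat u\le u$.
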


\begin{proof}
Following the similar lines of Lemma $9$ of \cite{hirano_fully_singular}, we have $w \geq u$. 

Finally, observe that
\begin{equation*}
-\Gr  (w-u) + g(z,w) - g(w,u)
=
\mu (w+u_{0})^p - \lambda (u+u_{0})^p
> 0
\quad \text{in } \Omega .
\end{equation*}
Consider the function 
\begin{equation*}
    h(z) = 
    \begin{cases}
        \frac{g(z,w) - g(z,u)}{w-u} & \text{ if } w> u, \\
        0 & \text{ if } w = u. 
    \end{cases}
\end{equation*}
Then by mean value theorem we have $|h(z)| \leq \delta (u + u_{0})^{-\delta-1}$. But we know that on every compact set $K \subset \om$, there exists a constant $\omega_K$ such that $u_{0} \geq \omega_K$. This gives $|h(z)| \leq \delta \omega_K^{-\delta-1}$. Thus $h \in L_{loc}^{1}(\om)$. By the strong maximum principle, we conclude that $w>u$ in $\Omega$. This completes the proof.
\end{proof}

The following two lemmas tells the existence of the solution in the limiting case. The proof of these two lemmas are essentially the same as that of Lemma $10$ and $11$ of \cite{hirano_fully_singular}.
\begin{lemma}
$\Lambda < \infty$ .
\end{lemma}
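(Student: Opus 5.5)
To show $\Lambda<\infty$, I will test the equation against a positive eigenfunction of $-\Gr$ and exploit the fact that $t\mapsto t^{-\delta}+\lambda t^p$ grows superlinearly. Let $\mu_1>0$ be the first Dirichlet eigenvalue of $-\Gr$ on $\om$, attained by a minimizer $\phi_1\in\spacegr$ of the Rayleigh quotient (compactness of $\spacegr\hookrightarrow L^2(\om)$). Replacing $\phi_1$ by $|\phi_1|$, we may take $\phi_1\ge0$, $\phi_1\not\equiv0$. Lemma \ref{lem:SMP_grushin} with $h\equiv0$ then gives $\phi_1>0$ in $\om$, and Lemma \ref{lem:uniform_estimate_grushin} gives $\phi_1\in L^\infty(\om)$.

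\textbf{Step 1 (inequality from a solution).} Suppose $u$ is a positive weak solution of \eqref{eq:shifted_singular_problem}$_\lambda$ lying in $L^\infty(\om)$; this is the class entering the definition of $\Lambda$. Set $U=u+u_0$, which solves $-\Gr U=U^{-\delta}+\lambda U^p$ in the sense of distributions, because $u_0$ solves \eqref{eq:singular_grushin}. Apply Lemma \ref{lem:weak_solution_identity} with $w=\phi_1$ to get
\begin{equation*}
\int_\om \gradgr u\cdot\gradgr\phi_1\,dz=\int_\om\bigl[(u+u_0)^{-\delta}-u_0^{-\delta}+\lambda(u+u_0)^p\bigr]\phi_1\,dz .
\end{equation*}
The left side equals $\mu_1\int_\om u\phi_1\,dz$. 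The term involving $u_0$ has to be handled separately because $u_0$ may fail to lie in $\spacegr$ when $\delta$ is large. Testing \eqref{eq:singular_grushin} with $\phi_1$, through a truncation $\phi_1\eta_k$ with cutoffs and the local estimates from the proof of Theorem \ref{thm:singular-problem}, is expected to give
\begin{equation*}
\mu_1\int_\om u_0\phi_1\,dz\ \ge\ \int_\om u_0^{-\delta}\phi_1\,dz .
\end{equation*}
Only the inequality direction is needed, and Fatou's lemma suffices for it. Adding the two relations yields
\begin{equation*}
\mu_1\int_\om U\phi_1\,dz\ \ge\ \int_\om\bigl(U^{-\delta}+\lambda U^p\bigr)\phi_1\,dz .
\end{equation*}

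\textbf{Step 2 (elementary bound).} Since $p>1$, the constant $c_\lambda:=\inf_{t>0}(t^{-\delta}+\lambda t^p)/t$ is finite and tends to $+\infty$ as $\lambda\to\infty$. Indeed, by scaling, $c_\lambda=C(\delta,p)\,\lambda^{(1+\delta)/(p+\delta)}$. Using $U>0$ and $\phi_1>0$, the inequality of Step 1 gives
\begin{equation*}
(\mu_1-c_\lambda)\int_\om U\phi_1\,dz\ \ge\ 0 .
\end{equation*}
Because $\int_\om U\phi_1\,dz>0$, this forces $c_\lambda\le\mu_1$, so $\lambda$ is bounded by a constant depending only on $\mu_1$, $\delta$ and $p$. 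Therefore $\Lambda<\infty$. This argument covers the subcritical, critical and supercritical regimes simultaneously, since boundedness of $u$ is assumed in every case.

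\textbf{Main obstacle.} The main difficulty is the justification of the $u_0$ inequality in Step 1 when $\delta\ge3$, since then $u_0\notin\spacegr$. I would approximate $\phi_1$ by the functions $\phi_1\,\chi(u_0/\varepsilon)$, where $\chi$ is a smooth cutoff vanishing near $0$, combined with the fact that $(u_0-\varepsilon)^+\in\spacegr$ from Theorem \ref{thm:singular-problem}. The error terms are then controlled using that $\Gr u_0\le0$ and that $u_0\ge\|u_1\|_\infty^{-\delta/(1+\delta)}u_1$ is comparable to $\phi_1$-type functions near the boundary. If this proves technical, an alternative is to apply the eigenfunction test directly to $U$ on subdomains $\om'\Subset\om$ with their own first eigenfunction $\phi_1^{\om'}$. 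On such subdomains everything is locally integrable, and the boundary term $-\int_{\partial\om'}U\,\partial_\nu\phi_1^{\om'}\ge0$ has the favorable sign. This yields $c_\lambda\le\mu_1(\om')$ for a fixed $\om'$, which again bounds $\lambda$.
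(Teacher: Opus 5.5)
Your argument is the standard eigenfunction test, which is presumably what the paper means when it defers to Lemma 10 of Hirano--Saccon--Shioji without giving details. Step 2 is fine: $c_\lambda=C(\delta,p)\lambda^{(1+\delta)/(p+\delta)}\to\infty$. The proof is complete once $\mu_1\int_\Omega u_0\phi_1\,dz\ge\int_\Omega u_0^{-\delta}\phi_1\,dz$ is justified, but the cutoff justification in your last paragraph does not work as stated. Test $-\Delta_\gamma u_0=u_0^{-\delta}$ with $\phi_1\chi(u_0/\varepsilon)$ and write $H_\varepsilon(t)=\int_0^t\chi(s/\varepsilon)\,ds$. This gives
\[
\int_\Omega u_0^{-\delta}\phi_1\chi(u_0/\varepsilon)\,dz=\mu_1\int_\Omega H_\varepsilon(u_0)\phi_1\,dz+\frac{1}{\varepsilon}\int_\Omega \phi_1\chi'(u_0/\varepsilon)\,|\nabla_\gamma u_0|^2\,dz .
\]
The term produced by the cutoff is nonnegative and sits on the wrong side, so Fatou alone does not give the inequality. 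Showing that this term vanishes needs Hopf-type boundary comparability of $u_0$, $u_1$ and $\phi_1$. The paper does not provide this, and it is delicate for $\Delta_\gamma$ at characteristic boundary points.

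There are two correct ways to close the step.

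(a) Apply Fatou to the approximations $u_{0,k}\in H^1_{0,\gamma}(\Omega)$ from the proof of Theorem~\ref{thm:singular-problem}. Testing their equation with $\phi_1$ and the eigenvalue equation with $u_{0,k}$ gives $\mu_1\int_\Omega u_{0,k}\phi_1\,dz=\int_\Omega\min\{u_{0,k}^{-\delta},k\}\phi_1\,dz$. Since $u_{0,k}\uparrow u_0$, the inequality follows in the limit. It also gives $u_0^{-\delta}\phi_1\in L^1(\Omega)$, which you need to split $g(z,u)\phi_1$ into its two pieces.

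(b) Use your subdomain variant, with the boundary integral replaced by its weak form. Let $\phi'=\phi_1^{\Omega'}$ and test its eigenvalue equation with $U\min\{\phi'/\varepsilon,1\}\in H^1_{0,\gamma}(\Omega')$. Drop the term $\varepsilon^{-1}\int_{\{\phi'<\varepsilon\}}U|\nabla_\gamma\phi'|^2\,dz\ge0$ and let $\varepsilon\to0$ to get $\int_{\Omega'}\nabla_\gamma U\cdot\nabla_\gamma\phi'\,dz\le\mu_1(\Omega')\int_{\Omega'}U\phi'\,dz$. On $\Omega'$ the function $U^{-\delta}+\lambda U^p$ is bounded, because $u_0\ge\|u_1\|_\infty^{-\delta/(1+\delta)}u_1$ is bounded below on $\overline{\Omega'}$ and $u,u_0\in L^\infty$. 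So $U$'s equation can be tested with $\phi'$, and combining gives $c_\lambda\le\mu_1(\Omega')$ with no boundary regularity needed.

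Route (b) is the more robust of the two, since it never pairs $\nabla_\gamma u_0$ with anything near $\partial\Omega$.
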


\begin{lemma}
Even in the extremal case $\lambda = \Lambda$, problem \eqref{eq:shifted_singular_problem}$_\Lambda$ admits a positive weak solution belonging to $L^{p+1}(\Omega)$.
\end{lemma}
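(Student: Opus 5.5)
The plan is to approximate from below: take $\lambda_k \uparrow \Lambda$ with $\lambda_k\in(0,\Lambda)$, and let $u_k := u_{\lambda_k}$ be the minimal solutions of \eqref{eq:shifted_singular_problem}$_{\lambda_k}$ from Lemma~\ref{lem:existence_critical_firstsoln}. These satisfy $u_k\in L^\infty(\Omega)$ and $I(u_k)<0$. By the comparison lemma just proved (any bounded positive solution for $\mu\ge\lambda$ lies above $u_\lambda$), the sequence is monotone increasing: $u_k\le u_{k+1}$. Set $u:=\lim_k u_k$ pointwise. The aim is to show that $u\in \spacegr\cap L^{p+1}(\Omega)$ and that it solves the $\Lambda$-problem.

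\textbf{Uniform bound.} Test the equation of $u_k$ with $u_k$ itself; this is allowed by Lemma~\ref{lem:weak_solution_identity}, since $u_k\in L^\infty$. This gives
\[
\|u_k\|^2+\int_\Omega g(z,u_k)u_k\,dz=\lambda_k\int_\Omega (u_k+u_0)^p u_k\,dz .
\]
Combine this identity with $I(u_k)<0$. Lemma~\ref{lem:properties_of_G}(iii) gives $G(z,u_k)\ge \tfrac12 g(z,u_k)u_k\ge0$. The elementary estimate $\int(u_k+u_0)^{p+1}\le(1+\varepsilon)\int(u_k+u_0)^pu_k+M_\varepsilon$, already used in Proposition~\ref{prop:CPS_condition}, then yields
\[
\Big(\tfrac12-\tfrac{1+\varepsilon}{p+1}\Big)\Big(\|u_k\|^2+\int_\Omega g(z,u_k)u_k\Big)\le C .
\]
Choosing $\varepsilon$ small (possible since $p>1$) bounds $\|u_k\|$, $\int_\Omega G(z,u_k)$ and $\lambda_k\int_\Omega(u_k+u_0)^{p+1}$ uniformly in $k$. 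Hence, along a subsequence, $u_k\rightharpoonup u$ in $\spacegr$. By monotone convergence and Fatou's lemma, $u\in L^{p+1}(\Omega)$ and $G(\cdot,u)\in L^1(\Omega)$.

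\textbf{Passage to the limit.} For $\psi\in C_c^\infty(\Omega)$ we pass to the limit in $\int_\Omega \gradgr u_k\cdot\gradgr\psi + g(z,u_k)\psi-\lambda_k(u_k+u_0)^p\psi=0$, term by term:
\begin{itemize}
\item The gradient term converges by weak convergence.
\item On $\operatorname{supp}\psi$ we have $u_0\ge\omega_K>0$ and $0\le u_k\le u$. Hence $|g(z,u_k)|\le \omega_K^{-\delta}$, and dominated convergence applies.
\item The power term increases monotonically to $(u+u_0)^p$. It converges by monotone convergence once $(u+u_0)^p\in L^1_{loc}$, which follows from $u\in L^{p+1}$.
\end{itemize}
So $u$ is a nonnegative weak solution of \eqref{eq:shifted_singular_problem}$_\Lambda$. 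Positivity follows from $u\ge u_1>0$ (the first element of the sequence), or from Lemma~\ref{lem:SMP_grushin}. Finally, $u+u_0$ is then a positive weak solution of \eqref{eq:singular_semilinear_problem}$_\Lambda$ in $L^{p+1}(\Omega)$.

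\textbf{Expected obstacle.} The main obstacle is the uniform bound in the supercritical case. There the energy identity still works formally, because each $u_k$ is bounded, but the bound is only in $\spacegr$ and $L^{p+1}$, not in $L^\infty$. This is exactly why the statement only asserts an $L^{p+1}$ solution. In the subcritical and critical cases, Lemma~\ref{lem:uniform_estimate_grushin} (subcritical case) or a Brezis–Kato type iteration should upgrade $u$ to $L^\infty\cap C^{0,\alpha}_{loc}$, via \cite[Theorem 5.5]{lanconelli_holder_Xelliptic}. A secondary point to check is that $I(u_k)<0$ holds uniformly, i.e.\ that the minimal solution has negative energy for every $\lambda_k$. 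This is part of Lemma~\ref{lem:existence_critical_firstsoln}.
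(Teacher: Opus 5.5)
Your proposal is correct and takes essentially the same route as the paper, which defers to Lemma 11 of \cite{hirano_fully_singular}. That argument also approximates $\Lambda$ from below by the minimal solutions $u_{\lambda_k}$. It then combines $I_{\lambda_k}(u_{\lambda_k})<0$ with the energy identity and Lemma~\ref{lem:properties_of_G}(iii), using $p>1$, $u_0\in L^\infty(\Omega)$ and $\Lambda<\infty$, to get uniform bounds in $\spacegr$ and $L^{p+1}(\Omega)$, and passes to the limit in the weak formulation exactly as you do.
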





Next, we are interested in the regularity of the very weak solutions we obtained of \eqref{eq:shifted_singular_problem}. In the following two lemmas, we are interested in the uniform estimate of the solutions.

\begin{lemma}\label{lem:Lr-regularity}
In both the subcritical and the critical regimes, every nonnegative weak solution of \eqref{eq:shifted_singular_problem}$_\lambda$ belongs to $L^r(\Omega)$ for all $r\in[1,\infty)$.
\end{lemma}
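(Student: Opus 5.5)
We will run a Brezis--Kato / Moser-type iteration on the translated problem \eqref{eq:shifted_singular_problem}$_\lambda$. Let $u\ge 0$ be a weak solution in $\spacegr$. The key structural fact is that the singular part has a good sign: since $u\ge 0$ we have $g(z,u)=u_0^{-\delta}-(u+u_0)^{-\delta}\ge 0$. Hence, in the sense of Lemma~\ref{lem:weak_solution_identity}, for every nonnegative test function $w\in\spacegr$,
\begin{equation*}
\int_\Omega \gradgr u\cdot\gradgr w\,dz \le \lambda\int_\Omega (u+u_0)^p w\,dz .
\end{equation*}
Since $u_0\in L^\infty(\Omega)$ by Theorem~\ref{thm:singular-problem} (it is bounded by $((1+\delta)u_1)^{1/(1+\delta)}$ and $u_1\in L^\infty$), we get $(u+u_0)^p\le a(z)(1+u)$ with $a:=C(1+u+u_0)^{p-1}$. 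Because $p\le 2_\gamma^*-1$ and $u\in L^{2_\gamma^*}(\Omega)$, we have $a\in L^{Q/2}(\Omega)$. So $u$ is a nonnegative subsolution of a linear equation $-\Gr u\le a(1+u)$ with potential in the critical Lebesgue class $L^{Q/2}$.

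We then carry out the Brezis--Kato argument. We test with $w=u\,\min\{u,L\}^{2(\beta-1)}$ for $\beta>1$ and a truncation level $L$; this is admissible in $\spacegr$ because the truncation keeps it in the space and it is nonnegative. The left side controls $c_\beta\|\gradgr(u\,\min\{u,L\}^{\beta-1})\|_2^2$ by the usual chain-rule computation for the vector fields $X_j$, which works verbatim since $\gradgr$ is a first-order derivation. For the right side we split $a=a\chi_{\{a\le K\}}+a\chi_{\{a>K\}}$. The bounded part contributes $K\int (1+u)u^{2\beta-1}$, which is finite as long as $u\in L^{2\beta}$. The large part is estimated by H\"older and the Sobolev inequality \eqref{ineq:Sobolev_embedding_grushin}:
\begin{equation*}
\|a\chi_{\{a>K\}}\|_{Q/2}\,\|u\min\{u,L\}^{\beta-1}\|_{2_\gamma^*}^2\le \|a\chi_{\{a>K\}}\|_{Q/2}\,S_\gamma^{-1}\|\gradgr(u\min\{u,L\}^{\beta-1})\|_2^2 .
\end{equation*}
Choosing $K$ large, depending on $\beta$, makes this absorbable into the left side, because $\|a\chi_{\{a>K\}}\|_{Q/2}\to0$ as $K\to\infty$. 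Letting $L\to\infty$ by monotone convergence gives $u^\beta\in L^{2_\gamma^*}$ whenever $u\in L^{2\beta}$. Starting from $\beta=2_\gamma^*/2$ and iterating with $\beta_{k+1}=\beta_k\,2_\gamma^*/2$ gives $u\in L^r$ for all $r<\infty$; boundedness of $\Omega$ fills in the smaller exponents.

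The main obstacle is the critical case. There the potential $a$ lies only in $L^{Q/2}$, which is exactly the borderline, and the absorption constant must be handled anew at each step. The splitting $a=a\chi_{\{a\le K\}}+a\chi_{\{a>K\}}$ with $K=K(\beta)$ is what resolves this, since we do not claim uniform bounds in $\beta$ and only want each finite $r$. A second technical point is to justify the test function and the one-sided inequality. Lemma~\ref{lem:weak_solution_identity} gives the identity for all $w\in\spacegr$ with $g(z,u)w\in L^1$, and dropping $\int g(z,u)w\ge0$ is legitimate for $w\ge0$. In the subcritical case the same argument applies, even more simply, since $a\in L^{s}$ for some $s>Q/2$.
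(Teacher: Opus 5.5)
Your proposal is correct and is essentially the paper's argument: a Brezis--Kato type iteration adapted from Struwe's Lemma B.3. It tests with truncated powers of $u$, discards the nonnegative term $g(z,u)\,w$, absorbs the part of the potential $(u+u_0)^{p-1}\in L^{Q/2}(\Omega)$ where it is large via H\"older and the Sobolev inequality, and then iterates the exponent. The only differences are cosmetic. The paper splits on $\{u\le C_0\}$ versus $\{u>C_0\}$ rather than on $\{a\le K\}$ versus $\{a>K\}$, which amounts to the same thing since $u_0\in L^\infty(\Omega)$. Also, in your absorbed term the right-hand side carries the factor $(1+u)$ rather than $u$; this only adds the harmless contribution $2\int_\Omega a\,dz$ coming from $\{u<1\}$.
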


\begin{proof}
The proof is the adaptation of the argument of \cite[Lemma B.3]{struwe_variational_methods}. Let $u$ be any nonnegative weak solution of \eqref{eq:shifted_singular_problem}$_\lambda$. 
Fix $\ell\ge0$ and take $\beta\in[1,\infty)$, we define $w = \min\{u^{\beta-1}, \ell\}$.
Then $uw,\,uw^2\in \spacegr$ for $\beta = 1$. Testing with $w$, for any $C_{0}>0$, we obtain
\begin{equation*}
\begin{aligned}
&\int_\Omega |\gradgr(uw)|^2\,dz
\le \beta \int_\Omega \gradgr u\cdot\gradgr(uw^2)\,dz  = \beta\int_\Omega\big(-g(z,u)+\lambda(u+u_{0})^p\big)uw^2\,dz \\
& \leq 2^{p-1}\beta\lambda\int_\Omega (u^p+u_{0}^p)uw^2\,dz \\
&\le 2^{p-1}\beta\lambda\int_\Omega u_{0}^p uw^2\,dz
+2^{p-1}\beta\lambda\left(
\int_{\{u\le C_{0}\}} u^{2\beta+p-1}\,dz
+\int_{\{ u>C_{0} \}} u^{p-1}u^2w^2\,dz
\right)\\
&\le 2^{p-1}\beta\lambda\Big(
\|u_{0}\|_\infty^p\,\|u\|_{2\beta-1}^{2\beta -1}
+|\Omega|C_{0}^{2\beta+p-1}
\Big)  +C\left(
\int_{\{u>C_{0}\}} u^{\frac{(p-1)Q}{2}}\,dz
\right)^{\!\frac{2}{Q}}
\int_\Omega |\gradgr(uw)|^2\,dz .
\end{aligned}
\end{equation*}
Choosing $C_{0}$ sufficiently large we may ensure $C\left(
\int_{\{ u>C_{0} \}} u^{\frac{(p-1)Q}{2}}\,dz
\right)^{\!\frac{2}{Q}}
\le \frac12$. Therefore,
\begin{equation*}
\int_{\{u^{\beta-1}\le \ell\}} |\gradgr u^\beta|^2\,dz
\le
\int_\Omega |\gradgr(uw)|^2\,dz
\le
2^p\beta\lambda
\Big(
\|u_{0}\|_\infty^p\,\|u\|_{2\beta-1}^{2\beta - 1}
+|\Omega|C_{0}^{2\beta+p-1}
\Big).
\end{equation*}

Letting $\ell\to\infty$ yields $u^\beta\in \spacegr$. By the Sobolev embedding we deduce that $u\in L^{\frac{2\beta Q}{Q-2}}(\Omega)$.
Iterating this estimate provides $u\in L^r(\Omega)$ for every $r\in[1,\infty)$, which completes the proof.
\end{proof}

\begin{lemma}\label{lem:L-infty-regularity}
In both the subcritical and critical cases, every nonnegative weak solution of \eqref{eq:shifted_singular_problem}$_\lambda$ belongs to $L^\infty(\Omega)$.
\end{lemma}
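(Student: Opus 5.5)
The plan is to reduce the claim to Lemma \ref{lem:uniform_estimate_grushin}, using the integrability already supplied by Lemma \ref{lem:Lr-regularity}. Let $u \ge 0$ be a weak solution of \eqref{eq:shifted_singular_problem}$_\lambda$ with $1<p\le 2_\gamma^*-1$. The singular term has a good sign. For $s\ge 0$ we have $g(z,s) = u_0^{-\delta}-(s+u_0)^{-\delta} \ge 0$, since $s\mapsto (s+u_0)^{-\delta}$ is decreasing. Hence, in the sense of distributions,
\begin{equation*}
-\Gr u = -g(z,u) + \lambda (u+u_0)^p \le \lambda (u+u_0)^p =: v .
\end{equation*}
We could test only against nonnegative $C_c^\infty$ functions. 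Alternatively, following the proof of Lemma \ref{lem:uniform_estimate_grushin}, we test against $G_t(u)=(u-t)^+$, which is admissible by Lemma \ref{lem:weak_solution_identity} because $g(\cdot,u)(u-t)^+\in L^1(\Omega)$ and $g(z,u)(u-t)^+\ge 0$. Either way, we are left with the inequality
\begin{equation*}
\int_\Omega \gradgr u\cdot\gradgr G_t(u)\,dz \le \int_\Omega v\,G_t(u)\,dz ,
\end{equation*}
which is exactly the input required by the Stampacchia-type truncation argument.

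Next we check the integrability of $v$. By Theorem \ref{thm:singular-problem}, $u_0 \le ((1+\delta)u_1)^{1/(1+\delta)}$ with $u_1\in L^\infty(\Omega)$, so $u_0\in L^\infty(\Omega)$. By Lemma \ref{lem:Lr-regularity}, $u\in L^r(\Omega)$ for every $r<\infty$. Therefore
\begin{equation*}
v \le \lambda 2^{p-1}\bigl(u^p+\|u_0\|_\infty^p\bigr)\in L^\alpha(\Omega)\quad\text{for every } \alpha<\infty .
\end{equation*}
In particular we may choose $\alpha>Q/2$. Lemma \ref{lem:uniform_estimate_grushin}, applied with this $v$, then gives $u\in L^\infty(\Omega)$.

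The main obstacle is making the differential inequality rigorous when the singular term $g(\cdot,u)$ is only known to lie in $L^1_{loc}(\Omega)$. Lemma \ref{lem:uniform_estimate_grushin} as written requires $-\Gr u\le v$ tested against $(u-t)^+$, and we must justify that the discarded term $\int g(z,u)(u-t)^+\,dz$ is finite and nonnegative. Finiteness comes from Lemma \ref{lem:weak_solution_identity}, and nonnegativity from the monotonicity of $g$ together with $u\ge0$. Once this is in place, the Moser/Stampacchia iteration from \cite{boccardo_book} applies verbatim, with the Grushin Sobolev inequality \eqref{ineq:Sobolev_embedding_grushin} (exponent $2_\gamma^*=2Q/(Q-2)$) playing the role of the Euclidean one. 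This explains why the threshold is $\alpha>Q/2$.
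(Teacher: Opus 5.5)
Your proposal is correct and is essentially the paper's proof: drop the nonnegative singular term $g(z,u)\ge 0$ to get $-\Gr u\le \lambda(u+u_0)^p$ in the sense of distributions, use Lemma~\ref{lem:Lr-regularity} together with $u_0\in L^\infty(\Omega)$ to put the right-hand side in $L^\alpha(\Omega)$ for some $\alpha>Q/2$, and conclude with Lemma~\ref{lem:uniform_estimate_grushin}. Your extra remarks on the sign of $g$, the boundedness of $u_0$, and the admissibility of $(u-t)^+$ as a test function via Lemma~\ref{lem:weak_solution_identity} make explicit details that the paper leaves implicit.
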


\begin{proof}
Let $u$ be a nonnegative weak solution. For any nonnegative test function $\psi\in C_c^\infty(\Omega)$, we have
\begin{equation*}
\int_{\Omega} \gradgr u \cdot \gradgr \psi \, dz
\le
\int_{\Omega} \lambda (u+u_{0})^{p}\psi \, dz .
\end{equation*}
Combining the $L^r$-regularity obtained in Lemma~\ref{lem:Lr-regularity} with Lemma \ref{lem:uniform_estimate_grushin}, we deduce that $u\in L^\infty(\Omega)$.
\end{proof}




\begin{lemma}\label{lem:truncation-property}
Let $r>0$, let $v\in L^{(r+1)/r}(\Omega)$ be a positive function, and let $u\in \spacegr\cap L^{r+1}(\Omega)$ be a positive weak solution of
\begin{equation*}
\begin{cases}
-\Gr u + g(z,u) = v & \text{in } \Omega,\\
u = 0 & \text{on } \partial\Omega .
\end{cases}
\end{equation*}
Then $(u+u_{0}-\varepsilon)^+ \in \spacegr$ for every $\varepsilon>0$. In particular, every positive weak solution $u$ of \eqref{eq:shifted_singular_problem} belonging to $L^{r+1}(\Omega)$ satisfies the same property.
\end{lemma}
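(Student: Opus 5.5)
The plan is to pass from $u$ to $w:=u+u_{0}$ and reduce the boundary behaviour of $(w-\varepsilon)^+$ to that of functions already known to lie in $\spacegr$.

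\textbf{Step 1: the equation for $w$.} Wherever $u+u_0>0$ we have $g(z,u)=u_0^{-\delta}-(u+u_0)^{-\delta}$. Adding the distributional identities $-\Gr u+g(z,u)=v$ and $-\Gr u_0=u_0^{-\delta}$ then gives
\[
-\Gr w = w^{-\delta}+v \quad \text{in } \mathcal{D}'(\Omega),
\]
with $w\in H^1_{\gamma,loc}(\Omega)$. The proof of Theorem \ref{thm:singular-problem} shows $u_0\in L^\infty(\Omega)$. Together with $u\in L^{r+1}(\Omega)$ this gives $w\in L^{r+1}(\Omega)$.

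\textbf{Step 2: a pointwise majorant in $\spacegr$.} Fix $\varepsilon>0$ and set $h:=u+(u_0-\varepsilon)^+$. We have $u\in\spacegr$ by hypothesis, and $(u_0-\varepsilon)^+\in\spacegr$ by the proof of Theorem \ref{thm:singular-problem}, so $h\in\spacegr$. We claim $0\le (w-\varepsilon)^+\le h$. If $u_0\ge\varepsilon$, then $w-\varepsilon=u+(u_0-\varepsilon)$. If $u_0<\varepsilon$, then $w-\varepsilon<u$. In both cases $(w-\varepsilon)^+\le h$, since $u>0$.

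\textbf{Step 3: global gradient bound.} Set $f:=(w-\varepsilon)^+$. By Lemma \ref{lem:density_grushin_space} there are $h_k\uparrow h$ in $\spacegr\cap L^\infty(\Omega)$ with compact support. Then $\phi_k:=\min\{f,h_k\}$ has compact support and lies in $H^1_{\gamma,loc}\cap L^\infty$, so $\phi_k\in\spacegr$. It is an admissible test function for the equation of Step 1; the enlargement of test functions noted in Section \ref{sec:purely_singular} allows this, since $w^{-\delta}\le\varepsilon^{-\delta}$ on $\{f>0\}$. Testing gives
\[
\int_\Omega \gradgr w\cdot\gradgr\phi_k\,dz=\int_\Omega (w^{-\delta}+v)\phi_k\,dz\le \varepsilon^{-\delta}\|h\|_1+\|v\|_{(r+1)/r}\|h\|_{r+1}=:C_\varepsilon ,
\]
and $C_\varepsilon$ is independent of $k$. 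On $\{f\le h_k\}$ we have $\gradgr w\cdot\gradgr\phi_k=|\gradgr f|^2$. On $\{f>h_k\}$ the integrand is $\gradgr f\cdot\gradgr h_k$, which we bound below by $-\tfrac12|\gradgr f|^2-\tfrac12|\gradgr h_k|^2$. The set $\{f>h_k\}$ shrinks to a null set as $k\to\infty$.

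\textbf{Main obstacle.} The difficulty is closing this estimate: the term $\tfrac12|\gradgr f|^2$ on $\{f>h_k\}$ is not a priori integrable. I would first try replacing $\phi_k$ by $\min\{f,\eta_j\}$, where $\eta_j$ is a compactly supported cutoff of height $j$. Alternatively, test with $T_j(f)\chi_k$ and absorb the term $\int f\,\gradgr f\cdot\gradgr\chi_k$ via Young's inequality, using $f\le h\in L^2$. Either route should yield $\int_{\{f\le h_k\}}|\gradgr f|^2\le C$, uniformly in $k$. Monotone convergence then gives $\gradgr f\in L^2(\Omega)$.

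\textbf{Step 4: conclusion.} With $\gradgr f\in L^2(\Omega)$, the sequence $\min\{f,h_k\}$ is bounded in $\spacegr$ and converges to $\min\{f,h\}=f$ in $L^2$. By weak closedness of $\spacegr$, $f\in\spacegr$, that is, $(u+u_0-\varepsilon)^+\in\spacegr$. The final assertion is the case $v=\lambda(u+u_0)^p$, with $r=p$; here $v\in L^{(p+1)/p}(\Omega)$ because $u\in L^{p+1}(\Omega)$.
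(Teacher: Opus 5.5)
There is a genuine gap, and it is the one you flag yourself. Step 3, the global bound on $\nabla_\gamma f$, is the whole content of the lemma, and neither remedy you suggest closes it.

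The only regularity available for $w=u+u_0$ is $w\in H^1_{\gamma,loc}(\Omega)$, since nothing better than that is known for $u_0$. So every test function you may insert into $-\Delta_\gamma w=w^{-\delta}+v$ must have compact support, and a boundary-layer term cannot be avoided:
\begin{enumerate}
\item[(a)] in your version it is $\int_{\{f>h_k\}}\nabla_\gamma f\cdot\nabla_\gamma h_k$;
\item[(b)] with a height-$j$ cutoff it is $\int_{\{f>\eta_j\}}\nabla_\gamma f\cdot\nabla_\gamma\eta_j$;
\item[(c)] in the Caccioppoli version, after Young's inequality, it is $\int_\Omega f^2|\nabla_\gamma\chi_k|^2\,dz$.
\end{enumerate}
Terms (a) and (b) need control of $\nabla_\gamma f$ in a layer near $\partial\Omega$, which is exactly what you are trying to prove. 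Term (c) is not controlled by $f\le h\in L^2(\Omega)$. As $\chi_k\uparrow 1$, the cutoff gradients grow like $\operatorname{dist}(z,\partial\Omega)^{-1}$ in a shrinking layer. You would therefore need a boundary Hardy inequality $\int_\Omega h^2\operatorname{dist}(z,\partial\Omega)^{-2}\,dz\le C\|h\|^2$ adapted to $\nabla_\gamma$. That inequality is not available in the paper, and you do not prove it.

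The missing idea is to keep $u$ and $u_0$ apart and approximate $u_0$ from below by $(u_0-\sigma)^+$, $\sigma>0$. This function lies in $H^1_{0,\gamma}(\Omega)$ by the proof of Theorem~\ref{thm:singular-problem}, and it is a subsolution. To see this, test the equation of $u_0$ with $\eta(u_0)\psi$, where $\eta$ is a ramp from $\sigma$ to $\sigma+\tau$, drop the nonnegative term $\int\eta'(u_0)|\nabla_\gamma u_0|^2\psi$, and let $\tau\to0$. This gives $\int_\Omega\nabla_\gamma(u_0-\sigma)^+\cdot\nabla_\gamma\psi\,dz\le\int_\Omega u_0^{-\delta}\psi\,dz$ for $\psi\ge0$.

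Now set $\phi_\sigma:=(u+(u_0-\sigma)^+-\varepsilon)^+=u-\min\{u,\varepsilon-(u_0-\sigma)^+\}$. This function is in $H^1_{0,\gamma}(\Omega)$ from the outset. It can therefore be used globally, with no cutoff, both in the equation of $u$ (as in Lemma~\ref{lem:weak_solution_identity}) and in the subsolution inequality. Add the two. On $\{\phi_\sigma>0\}$ you have $u+u_0\ge u+(u_0-\sigma)^+>\varepsilon$, and everywhere $\phi_\sigma\le u+u_0$. This yields
\[
\|\phi_\sigma\|^2\le\int_\Omega\big((u+u_0)^{-\delta}+v\big)\phi_\sigma\,dz\le\varepsilon^{-\delta}\|u+u_0\|_1+\|v\|_{(r+1)/r}\|u+u_0\|_{r+1},
\]
uniformly in $\sigma$. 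Since $\phi_\sigma\to(u+u_0-\varepsilon)^+$ a.e. as $\sigma\to0$, weak compactness gives the claim.

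Your Steps 1, 2 and 4 are fine, as is the reduction of the last assertion to $v=\lambda(u+u_0)^p$ with $r=p$. What is missing is this device, or some other argument that avoids boundary cutoffs.
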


\begin{proof}
Let $\varepsilon,\sigma>0$ and since $(u_{0} - \sigma)^+ \in \spacegr$, we have $w=\min\{u,\varepsilon-(u_{0}-\sigma)^+\}\in \spacegr$. Observe that
\begin{equation*}
0\le v(u-w)
= v\big(u+(u_{0}-\sigma)^+-\varepsilon\big)^+
\le vu+vu_{0},
\end{equation*}
and $vu+vu_{0}\in L^1(\Omega)$, an argument similar to that used in Lemma \ref{lem:weak_solution_identity} shows that $g(\cdot,u)(u-w)\in L^1(\Omega)$ and
\begin{equation}\label{eq:weak-identity}
\int_{\Omega}
\left(
\gradgr u\cdot\gradgr(u-w)
+ g(z,u)(u-w)
- v(u-w)
\right)\,dz
=0 .
\end{equation}
Furthermore, since
\begin{equation}\label{eq:singular_weak_eq}
\int_{\Omega}\gradgr u_{0}\cdot\gradgr\varphi\,dz
=
\int_{\Omega}u_{0}^{-\delta}\varphi\,dz
\quad
\text{for every } \varphi\in C_c^\infty(\Omega),\ \varphi\ge0.
\end{equation}
Taking $\varphi = \eta_{\varepsilon}(\bar{u}) \psi$ in \eqref{eq:singular_weak_eq} where $\psi \in C_{c}^{\infty}(\om)$, $\psi \geq 0$ and 
\begin{equation*}
    \eta_{\varepsilon}(t) = 
    \begin{cases}
        0, & t\leq \sigma \\
        \frac{t-\sigma}{\varepsilon}, & \sigma < t < \sigma + \varepsilon \\
        1, & t \geq \sigma + \varepsilon.
    \end{cases}
\end{equation*}
and passing to the limit, we obtain
\[
\int_{\Omega}\gradgr(u_{0}-\sigma)^+\cdot\gradgr\psi\,dz
\le
\int_{\Omega}u_{0}^{-\delta}\psi\,dz
\quad
\text{for every } \psi\in C_c^\infty(\Omega),\ \psi\ge0.
\]
This implies
\begin{equation}\label{eq:barrier-estimate}
\int_{\Omega}
\gradgr(u_{0}-\sigma)^+\cdot\gradgr(u-w)\,dz
\le
\int_{\Omega}u_{0}^{-\delta}(u-w)\,dz .
\end{equation}
We note that $u+u_{0}\ge\varepsilon$ on $\{u\ne w\}$ which implies $(u+u_{0})^{-\delta}(u-w)\in L^1(\Omega)$. Moreover, proceeding as in Lemma \ref{lem:weak_solution_identity} we have $u_{0}^{-\delta}(u-w)\in L^1(\Omega)$. Using \eqref{eq:weak-identity} and \eqref{eq:barrier-estimate}, we have
\begin{equation*}
\begin{aligned}
\int_{\Omega}
\big|\gradgr\big((u+(u_{0}-\sigma)^+-\varepsilon)^+\big)\big|^2&\,dz
=
\int_{\Omega}
\gradgr\big(u+(u_{0}-\sigma)^+-\varepsilon\big)\gradgr(u-w)\,dz \\
&\le
\int_{\Omega}u_{0}^{-\delta}(u-w)\,dz
-\int_{\Omega}g(z,u)(u-w)\,dz
+\int_{\Omega}v(u-w)\,dz \\
&=
\int_{\Omega}(u+u_{0})^{-\delta}(u-w)\,dz
+\int_{\Omega}v(u-w)\,dz \\
&\le
\varepsilon^{-\delta}\int_{\Omega}(u-w)\,dz
+\int_{\Omega}v(u-w)\,dz .
\end{aligned}
\end{equation*}
Therefore, for every $\varepsilon>0$ the family $\left\{ \big(u+(u_{0}-\sigma)^+ \varepsilon\big)^+:\sigma>0 \right\}$ is bounded in $\spacegr$. Passing to the limit as $\sigma\to0$ yields
\[
(u+u_{0}-\varepsilon)^+ \in \spacegr
\quad \text{for every } \varepsilon>0,
\]
which concludes the proof.
\end{proof}

\begin{lemma}
Let $\lambda>0$ and let $v\in \spacegr$ be positive such that
\begin{equation*}
-\Gr v = v^{-\delta} + \lambda v^p
\quad \text{in } \Omega
\end{equation*}
in the sense of distributions. Assume $(v-\varepsilon)^+ \in \spacegr$ for every $\varepsilon>0$. In the subcritical and critical cases, suppose $v\in L^{\frac{2pQ}{Q+2}}(\Omega)$, and in the supercritical case, suppose $v\in L^\infty(\Omega)$. Then $v-u_{0}$ is a positive weak solution of \eqref{eq:shifted_singular_problem} and $v-u_{0}\in L^\infty(\Omega)$.
\end{lemma}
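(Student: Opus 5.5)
Set $u := v - u_0$. The plan has three steps: show $u > 0$, show that $u$ is a weak solution of \eqref{eq:shifted_singular_problem} (in particular $u \in \spacegr$), and show $u \in L^\infty(\Omega)$.

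\textbf{Step 1: positivity.} We have $v>0$ and $v^{-\delta}\in L^1_{loc}(\Omega)$, since the equation holds in the sense of distributions. Hence $v$ satisfies the local supersolution inequality of \eqref{eq:variational-inequality} with $w = \lambda v^p \geq 0$, and therefore also with $w=0$. Moreover, $u_0$ is a solution of \eqref{eq:singular_grushin} with $u_0 \le 0$ on $\partial\Omega$ in the sense of Definition \ref{def:boundary_def}. The comparison Lemma \ref{lem:comparison}, applied with $\varphi = u_0$ and $\psi = v$, gives $u_0 \le v$, so $u \ge 0$. Subtracting the two equations,
\[
-\Gr u = v^{-\delta} - u_0^{-\delta} + \lambda v^p .
\]
Writing $v^{-\delta}-u_0^{-\delta} = -h\,u$ with $0\le h \le \delta u_0^{-\delta-1}$, and noting $h\in L^\infty_{loc}(\Omega)$ because $u_0$ is locally bounded below, we get $-\Gr u + h u \geq \lambda v^p > 0$. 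The strong maximum principle (Lemma \ref{lem:SMP_grushin}) then yields $u>0$.

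\textbf{Step 2: $u$ is a weak solution.} We need $u \in \spacegr$. Distributionally, $-\Gr u + g(z,u) = \lambda(u+u_0)^p$ with $g(z,u) = u_0^{-\delta}-v^{-\delta}\ge 0$ locally integrable. I would test this equation with truncations $T_k(u-\varepsilon)^+$ and, as in Lemma \ref{lem:truncation-property}, combine them with $(v-\varepsilon)^+ \in \spacegr$ and $(u_0-\sigma)^+$. Since $g(z,u)\ge0$, one obtains
\[
\int |\gradgr (u-\varepsilon)^+|^2 \le \lambda\int v^p u .
\]
In the subcritical/critical case the right-hand side is finite: $v^p\in L^{2Q/(Q+2)}$ pairs with $u\in L^{2^*_\gamma}$, which follows from $u\le v$ and $v \in \spacegr$ up to the boundary truncation. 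In the supercritical case it is finite since $v\in L^\infty$. Letting $\varepsilon\to0$ gives $u\in\spacegr$. Passing from distributional test functions to the notion in Definition \ref{def:weak_sub_super_solution_grushin} is then immediate.

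The main obstacle is controlling $\gradgr u$ near $\partial\Omega$, where neither $v$ nor $u_0$ is known to be in $\spacegr$. The key is to use only the cut-off quantities $(v-\varepsilon)^+$ and $(u_0-\sigma)^+$, arguing exactly as in the proof of Lemma \ref{lem:truncation-property} but with the roles reversed.

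\textbf{Step 3: boundedness.} In the supercritical case, $u\le v\in L^\infty$ directly. In the subcritical and critical cases, $u$ is a nonnegative weak solution, so Lemma \ref{lem:L-infty-regularity} (through Lemma \ref{lem:Lr-regularity} and Lemma \ref{lem:uniform_estimate_grushin}) gives $u\in L^\infty(\Omega)$.
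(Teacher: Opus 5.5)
Your Steps 1 and 3 are essentially fine. Applying the strong maximum principle before you know $u\in\spacegr$ is harmless, since that argument is local. Step 2, however, is the real content of the lemma, and it has a genuine gap.

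\textbf{Why Step 2 fails.} The bound $\int_\om|\gradgr(u-\varepsilon)^+|^2\,dz\le\lambda\int_\om v^p u\,dz$ is only formal, for two reasons.

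First, $(u-\varepsilon)^+$ (or $T_k(u-\varepsilon)^+$) is not known to be an admissible test function. Its gradient is $\chi_{\{u>\varepsilon\}}(\gradgr v-\gradgr u_0)$. The term $\gradgr v$ is square integrable on $\{v>\varepsilon\}$, but nothing tells you that $\gradgr u_0$ is square integrable on $\{u>\varepsilon\}$. The hypothesis $(v-\varepsilon)^+\in\spacegr$ does not give $\{v>\varepsilon\}\Subset\om$, and $u_0$ may be arbitrarily small on that set.

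Second, mirroring Lemma \ref{lem:truncation-property} ``with the roles reversed'' does not work, because the sign structure does not reverse. In that lemma one already knows $u\in\spacegr$, so $u-w$ is admissible by Lemma \ref{lem:weak_solution_identity}. One then bounds the gradient of a \emph{sum} $u+(u_0-\sigma)^+$, which needs only the upper (subsolution) inequality for $(u_0-\sigma)^+$. To bound the gradient of a \emph{difference} such as $(v-\varepsilon)^+-(u_0-\sigma)^+$, you would need a lower (supersolution) inequality for $(u_0-\sigma)^+$. That inequality is false: truncating from below creates a nonpositive measure on $\{u_0=\sigma\}$. Formally its mass is $\int_{\{u_0>\sigma\}}u_0^{-\delta}\,dz$, which blows up as $\sigma\to0$ whenever $u_0^{-\delta}\notin L^1(\om)$. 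Controlling that term would require knowing that $v-u_0$ is small where $u_0$ is small, which is exactly the boundary information you are trying to prove.

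\textbf{The missing idea.} Do not prove $v-u_0\in\spacegr$ directly; use uniqueness instead.
\begin{enumerate}
\item[(a)] Freeze the right-hand side. The hypothesis $v\in L^{\frac{2pQ}{Q+2}}(\om)$ (or $v\in L^\infty(\om)$ in the supercritical case) is there precisely so that $\lambda v^p$ lies in the dual of $\spacegr$.
\item[(b)] Minimize $\tilde I(u)=\frac12\int_\om|\gradgr u|^2\,dz+\int_\om G(z,u)\,dz-\lambda\int_\om v^p u\,dz$ over $K_0$. Since $0$ is a strict subsolution, Proposition \ref{prop:sub_super_solution_criterion} shows the minimizer $\tilde u\in\spacegr$ is a weak solution of $-\Gr\tilde u+g(z,\tilde u)=\lambda v^p$.
\item[(c)] Lemma \ref{lem:truncation-property} gives $(\tilde u+u_0-\varepsilon)^+\in\spacegr$ for every $\varepsilon>0$. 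Hence $\tilde u+u_0$ and $v$ both solve $-\Gr w=w^{-\delta}+\lambda v^p$, with the boundary condition of Definition \ref{def:boundary_def}.
\item[(d)] Apply Lemma \ref{lem:comparison} in both directions to get $v=\tilde u+u_0$. Then $v-u_0=\tilde u\in\spacegr$ comes for free, with no boundary estimate on $v-u_0$ needed.
\end{enumerate}
After this, your Step 3 (and the strong maximum principle for positivity) completes the proof.
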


\begin{proof}
Consider the auxiliary problem 
\begin{equation}\label{eq:auxillary_pb_fixed_rhs}
\begin{cases}
-\Gr u + g(z,u) = v^p & \text{in } \Omega,\\
u = 0 & \text{on } \partial\Omega .
\end{cases}
\end{equation}
It is straightforward to verify that $0$ is a strict subsolution of this problem. Define the functional $\tilde I:\spacegr\to(-\infty,\infty]$ by
\begin{equation*}
\tilde I(u)=
\begin{cases}
\displaystyle
\frac12\int_\Omega |\gradgr u|^2\,dz
+ \int_\Omega G(z,u)\,dz
- \lambda\int_\Omega v^p u\,dz,
& \text{if } G(z,u)\in L^1(\Omega),\\[1.2ex]
+\infty, & \text{otherwise}.
\end{cases}
\end{equation*}
Let $K_{0} = \left\{ u \in \spacegr : u \geq 0 \text{ a.e. in } \om \right\}$ and $\tilde{I}_{K_0}$ be the restriction of the functional $\tilde{I}$. Since the functional $\tilde{I}_{K_0}$ is bounded below, we can choose a minimizing sequence $\{u_k\}\subset K_0$ such that $\tilde I_{K_0}(u_k) \to \inf \tilde I_{ K_0}(K_0)$. Then there exists $u\in K_0$ such that $u_k \rightharpoonup u$ weakly and
\[
\tilde I_{K_0}(u)=\min \tilde I_{K_0}(K_0).
\]
Hence, $0\in \partial^- \tilde I_{K_0}(u)$. Since $0$ is a strict subsolution, Proposition \ref{prop:sub_super_solution_criterion} implies that $u$ is a nontrivial nonnegative weak solution of the problem \eqref{eq:auxillary_pb_fixed_rhs}.  
By Lemma~\ref{lem:truncation-property}, we have $(u+u_{0}-\varepsilon)^+ \in \spacegr$ for every $\varepsilon>0$. Moreover,
\begin{equation*}
\int_\Omega \big(\gradgr(u+u_{0})\cdot\gradgr w
-(u+u_{0})^{-\delta}w
-\lambda v^p w\big)\,dz=0
\end{equation*}
and
\begin{equation*}
\int_\Omega \big(\gradgr v\cdot\gradgr w
-v^{-\delta}w
-\lambda v^p w\big)\,dz=0
\end{equation*}
for every $w\in \spacegr\cap L^\infty(\Omega)$ with compact support. Applying Lemma \ref{lem:comparison} we deduce $v = u+u_{0}$. Consequently $v - u_{0}$ is a nonnegative solution of \eqref{eq:shifted_singular_problem}.

By the assumptions on $v$ and Lemma~\ref{lem:L-infty-regularity}, we conclude $u\in L^\infty(\Omega)$. Using interior regularity theory we further obtain $u\in C_{loc}^{0,\alpha}(\Omega)$. Finally,
\begin{equation*}
-\Gr u + g(z,u)
= \lambda (u+u_{0})^p >0
\quad \text{in }\Omega .
\end{equation*}
Since $g(z,u) \in L_{loc}^{1}(\om)$, by the strong maximum principle proved in Lemma \ref{lem:SMP_grushin} we conclude that $u>0$ in $\Omega$, completing the proof.
\end{proof}

\bibliographystyle{abbrv}
\bibliography{references}
\end{document}